\documentclass[a4paper,reqno,11pt]{amsart}
\pdfoutput=1
\sloppy

\usepackage[american]{babel}

\linespread{1.3}

\usepackage{mathpazo}
\usepackage[T1]{eulervm}
\usepackage[scaled]{helvet}
\usepackage[a4paper, scale={0.72,0.74}, marginratio={1:1}, footskip=7mm, headsep=10mm]{geometry}

\usepackage{indentfirst}

\usepackage{emptypage}  

\usepackage{perpage}

\usepackage{enumitem}

\usepackage{fixltx2e}

\usepackage{relsize}

\usepackage[dvipsnames]{xcolor}

\usepackage{multicol}

\usepackage{hyperref}

\usepackage{booktabs}
\usepackage{graphicx}
\usepackage{tabularx}
\usepackage{caption,subcaption}

\captionsetup[figure]{width=0.8\textwidth,
textfont=small,labelsep=period,labelfont=sc}
\captionsetup[table]{position=top}
\usepackage{longtable}
\usepackage{tikz,tikz-cd}
\usetikzlibrary{matrix, calc, arrows, decorations.markings, decorations.pathmorphing, positioning}

\usepackage{amsmath,amssymb,amsthm}
\usepackage{stmaryrd,mathtools,mathdots}
\usepackage{eufrak}
\usepackage{mathabx}

\usepackage{accents}

\usepackage{bm,bbm}

\usepackage{braket}

\numberwithin{equation}{section}

\usepackage{subdepth}

\usepackage[style=alphabetic, backend=bibtex8, citestyle=alphabetic, firstinits=true, maxnames=99, maxalphanames=5, sorting=nyt]{biblatex}
\DeclareFieldFormat[article, inbook, incollection, inproceedings, patent, thesis, unpublished]{title}{#1} 
\addbibresource{biblio}

\renewbibmacro{in:}{}

\makeatletter
\renewcommand\section{\@startsection{section}{1}%
\z@{.7\linespacing\@plus\linespacing}{.5\linespacing}%
{\large\scshape\centering}}
\renewcommand\subsection{\@startsection{subsection}{2}%
  \z@{.5\linespacing\@plus.7\linespacing}{-.5em}%
  {\scshape}}
\makeatother

\setcounter{tocdepth}{2}
\makeatletter
\newcommand \Dotfill {\leavevmode \leaders \hb@xt@ 6pt{\hss .\hss }\hfill \kern \z@}
\makeatother
\makeatletter
\def\@tocline#1#2#3#4#5#6#7{\relax
  \ifnum #1>\c@tocdepth 
  \else
    \par \addpenalty\@secpenalty\addvspace{#2}%
    \begingroup \hyphenpenalty\@M
    \@ifempty{#4}{%
      \@tempdima\csname r@tocindent\number#1\endcsname\relax
    }{%
      \@tempdima#4\relax
    }%
    \parindent\z@ \leftskip#3\relax \advance\leftskip\@tempdima\relax
    \rightskip\@pnumwidth plus4em \parfillskip-\@pnumwidth
    #5\leavevmode\hskip-\@tempdima
      \ifcase #1
       \or\or \hskip 1.65em \or \hskip 3.3em \else \hskip 4.95em \fi%
      #6\nobreak\relax
    \Dotfill
    \hbox to\@pnumwidth{\@tocpagenum{#7}}\par
    \nobreak
    \endgroup
  \fi}
\makeatother
\makeatletter
\def\l@section{\@tocline{1}{0pt}{1pc}{}{\scshape}}
\renewcommand{\tocsection}[3]{%
\indentlabel{\@ifnotempty{#2}{\ignorespaces#1 #2.\hskip 0.7em}}#3}
\def\l@subsection{\@tocline{2}{0pt}{1pc}{5pc}{\scshape}}

\def\l@subsubsection{\@tocline{3}{0pt}{1pc}{7pc}{\scshape}}

\makeatother

\newenvironment{myitemize}{%
\begin{list}{$\bullet$}%
 	{%
	\setlength{\itemsep}{0.4em}%
	\setlength{\topsep}{0.5em}%
	\setlength\leftmargin{2.45em}%
	\setlength\labelwidth{2.05em}%
	\setlength{\labelsep}{0.4em}%
	}%
	}%
{\end{list}}
\renewenvironment{itemize}{
\begin{myitemize}}%
{\end{myitemize}}

\MakePerPage{footnote}
\makeatletter
\newcommand*{\myfnsymbolsingle}[1]{%
  \ensuremath{%
    \ifcase#1
    \or 
      \dagger
    \else 
      \@ctrerr  
    \fi
  }%
}   
\makeatother

\usepackage{alphalph}
\newalphalph{\myfnsymbolmult}[mult]{\myfnsymbolsingle}{}

\theoremstyle{plain}
\newtheorem{theorem}{Theorem}[section]

\newtheorem{proposition}[theorem]{Proposition}
\newtheorem{corollary}[theorem]{Corollary}

\theoremstyle{definition}

\newtheorem{remark}[theorem]{Remark}

\makeatletter
\DeclarePairedDelimiter\abs{\lvert}{\rvert} 
\let\oldabs\abs
\def\abs{\@ifstar{\oldabs}{\oldabs*}}
\DeclarePairedDelimiterX{\norm}[1]{\lVert}{\rVert}{#1} 
\let\oldnorm\norm
\def\norm{\@ifstar{\oldnorm}{\oldnorm*}}
\DeclarePairedDelimiterX{\ceil}[1]{\lceil}{\rceil}{#1} 
\let\oldceil\ceil
\def\ceil{\@ifstar{\oldceil}{\oldceil*}}
\DeclarePairedDelimiterX{\floor}[1]{\lfloor}{\rfloor}{#1} 
\let\oldfloor\floor
\def\floor{\@ifstar{\oldfloor}{\oldfloor*}}
\makeatother

\renewcommand{\P}{\mathbb{P}} 
\newcommand{\E}{\mathbb{E}} 
\newcommand{\1}{\mathbbm{1}} 
\newcommand{\diff}{\mathop{}\!\mathrm{d}}
\renewcommand{\i}{\mathrm{i}} 
\DeclareMathOperator{\e}{\mathrm{e}} 
\newcommand{\sfK}{\mathsf{K}}
\newcommand{\sfT}{\mathsf{T}}
\newcommand{\sfR}{\mathsf{R}}
\newcommand{\sfC}{\mathsf{C}}
\newcommand{\sfS}{\mathsf{S}}
\newcommand{\sfB}{\mathsf{B}}
\newcommand{\sfa}{\mathsf{a}}
\newcommand{\sfb}{\mathsf{b}}
\newcommand{\sfc}{\mathsf{c}}
\newcommand{\sfd}{\mathsf{d}}
\newcommand{\sfe}{\mathsf{e}}
\newcommand{\CsfK}{\widecheck{\mathsf{K}}}
\newcommand{\CsfS}{\widecheck{\mathsf{S}}}\newcommand{\CsfB}{\widecheck{\mathsf{B}}}\newcommand{\Csfa}{\widecheck{\mathsf{a}}}\newcommand{\Csfb}{\widecheck{\mathsf{b}}}\newcommand{\Csfc}{\widecheck{\mathsf{c}}}\newcommand{\Csfd}{\widecheck{\mathsf{d}}}
\newcommand{\Crho}{\widecheck{\rho}}
\newcommand{\Csigma}{\widecheck{\sigma}}
\newcommand{\Ctau}{\widecheck{\tau}}
\newcommand{\polymer}{\mathcal{P}}

\DeclareMathOperator{\id}{id}
\newcommand{\up}{\rm{up}}
\DeclareMathOperator{\invGamma}{invGamma}
\DeclareMathOperator{\type}{type}
\DeclareMathOperator{\GL}{GL}
\newcommand{\Zrepl}[1]{{%
  Z^{\mathsf{repl}}_{#1}%
}}

\renewcommand{\emptyset}{\varnothing}
\newcommand{\N}{\mathbb{N}} 
\newcommand{\Z}{\mathbb{Z}} 
\newcommand{\R}{\mathbb{R}} 
\newcommand{\C}{\mathbb{C}} 


\renewcommand{\epsilon}{\varepsilon}
\renewcommand{\rho}{\varrho}
\renewcommand{\phi}{\varphi}

\DeclareMathSymbol{\widehatsym}{\mathord}{largesymbols}{"62}

\renewcommand{\tilde}{\widetilde}

\makeatletter
\newcommand{\doubletilde}[1]{{%
  \mathpalette\double@tilde{#1}%
}}
\newcommand{\double@tilde}[2]{%
  \sbox\z@{$\m@th#1\tilde{#2}$}%
  \ht\z@=.9\ht\z@
  \tilde{\box\z@}%
}
\makeatother

\newenvironment{myenumerate}{%
\renewcommand{\theenumi}{(\roman{enumi})}%
\renewcommand{\labelenumi}{\theenumi}%
\begin{list}{\labelenumi}
	{%
	\setlength{\itemsep}{0.4em}%
	\setlength{\topsep}{0.5em}%
	\setlength\leftmargin{2.45em}%
	\setlength\labelwidth{2.05em}%
	\setlength{\labelsep}{0.4em}%
	\usecounter{enumi}%
	}%
	}%
{\end{list}
}
{\end{myenumerate}}

%
  {\left\lbrace\begin{array}{@{}l@{}}}%
  {\end{array}\right.}

\makeatletter
\newsavebox{\mybox}\newsavebox{\mysim}
\newcommand{\distras}[1]{%
  \savebox{\mybox}{\hbox{\kern3pt$\scriptstyle#1$\kern3pt}}%
  \savebox{\mysim}{\hbox{$\sim$}}%
  \mathbin{\overset{#1}{\kern\z@\resizebox{\wd\mybox}{\ht\mysim}{$\sim$}}}%
}
\makeatother

\setlength{\headheight}{14pt}

\author[E.~Bisi]{Elia Bisi}
\address{Institut f\"ur Stochastik und Wirtschaftsmathematik\\
Technische Universit\"at Wien\\
E 105-07\\
Wiedner Hauptstrasse 8-10\\
1040 Wien\\
Austria}
\email{elia.bisi@tuwien.ac.at}

\author[N.~O'Connell]{Neil O'Connell}
\address{School of Mathematics and Statistics\\
University College Dublin\\
Dublin 4, Ireland}
\email{neil.oconnell@ucd.ie}

\author[N.~Zygouras]{Nikos Zygouras}
\address{Mathematics Institute\\
Zeeman building\\
University of Warwick\\
Coventry CV4 7AL, UK}
\email{n.zygouras@warwick.ac.uk}

\thanks{\emph{Acknowledgements.}
We thank Timo Sepp\"al\"ainen for useful discussions.
Elia Bisi and Neil O'Connell were supported by ERC grant 669306.
Nikos Zygouras was supported by EPSRC grant EP/R024456/1.}


\begin{document}

\title[The geometric Burge correspondence and the partition function of polymer replicas]{The geometric Burge correspondence \\ and the partition function of polymer replicas}

\begin{abstract}
We construct a geometric lifting of the Burge correspondence as a composition of local birational maps on generic Young-diagram-shaped arrays.
We establish its fundamental relation to the geometric Robinson-Schensted-Knuth correspondence and to the geometric Sch\"utzenberger involution.
We also show a number of properties of the geometric Burge correspondence, specializing them to the case of symmetric input arrays.
In particular, our construction shows that such a mapping is volume preserving in log-log variables.
As an application, we consider a model of two polymer paths of given length constrained to have the same endpoint, known as \emph{polymer replica}.
We prove that the distribution of the polymer replica partition function in a log-gamma random environment is a Whittaker measure, and deduce the corresponding Whittaker integral identity.
For a certain choice of the parameters, we notice a distributional identity between our model and the symmetric log-gamma polymer studied by O'Connell, Sepp\"al\"ainen, and Zygouras (2014).
\end{abstract}

\subjclass[2010]{Primary: 60Cxx. Secondary: 05A05, 33C15, 82B23, 82D60}

\keywords{Burge correspondence, Robinson-Schensted-Knuth correspondence, Sch\"utzenberger involution, geometric lifting, polymer replica, log-gamma polymer.}

\maketitle

\tableofcontents

\addtocounter{section}{0}

\newpage

\section{Introduction}
\label{sec:intro}

\subsection{Background}

The Robinson-Schensted-Knuth (RSK) correspondence~\cite{knuth70}, the Burge correspondence~\cite{burge74}, and the Sch\"utzenberger involution~\cite{schutzenberger77} are celebrated combinatorial bijections, classically described in terms of operations on (generalized) permutations, integer matrices, words, and Young tableaux.
These correspondences play a fundamental role in algebraic combinatorics, especially in the theory of symmetric functions.
See~\cite{fulton97, stanley99} for more details on these correspondences.

The RSK correspondence is classically described as a map between non-negative integer matrices and pairs of semistandard Young tableaux of the same shape, through a \emph{row insertion} algorithm.
It can be used to prove various Cauchy-Littlewood identities, thus connecting to Schur functions~\cite{stanley99}.
The RSK map and Schur functions underpin the solvability of various interconnected probabilistic models such as longest increasing subsequences in random permutations, directed last passage percolation, the corner growth model, and the totally asymmetric simple exclusion process -- see the seminal works~\cite{baikDeiftJohansson99, johansson00, baikRains01a}.

Fomin~\cite{fomin88} and Roby~\cite{roby91} first expressed the RSK correspondence in terms of local growth rules.
Berenstein and Kirillov~\cite{berensteinKirillov01} described it explicitly in terms of piecewise linear functions, i.e.\ operations in the $(\max,+)$-semiring, thus allowing the extension to input matrices with real (not necessarily integer) entries.
Furthermore, such a piecewise linear description is naturally prone to be extended to generic Young-diagram-shaped input arrays (not necessarily rectangular).
The latter aspect was useful to study last passage percolation models with point-to-line and point-to-half-line path geometries and/or various symmetries on the input weights.
In particular, Bisi and Zygouras~\cite{bisi18, bisiZygouras19b, bisiZygouras19c} found new exact formulas for such models in terms of all the irreducible characters of the classical groups (e.g.\ symplectic and orthogonal characters), which complemented the Schur measures of Baik and Rains~\cite{baikRains01a}.

The Burge correspondence is, in the classical combinatorial description, a variant of the RSK mapping that bijectively transforms a non-negative integer matrix into a pair of semistandard Young tableaux of the same shape, through a \emph{column insertion} algorithm.
Its description in terms of piecewise linear functions is due to van Leeuwen~\cite{vanLeeuwen05} -- see also~\cite{krattenthaler06, betea18}.
The Burge correspondence, analogously to RSK, can be used to study last passage percolation models; for recent applications, see~\cite{betea18, bisiCundenGibbonsRomik20}.

The Sch\"utzenberger involution is classically described as an \emph{evacuation} algorithm, or equivalently a sequence of \emph{jeu de taquin} operations on a (semi)standard Young tableau.
It can be shown to be indeed an involution and to preserve the shape of the tableau.
It can be alternatively described as a piecewise linear function on Gelfand-Tsetlin patterns~\cite{berensteinKirillov96} and, as such, extended to input patterns with real entries.
Sch\"utzenberger involution and \emph{jeu de taquin} have been also proven to be useful tools in combinatorial probability -- see e.g.~\cite{romikSniady15}.

Considering the piecewise linear description of the above bijections, one may formally replace the operations $(\max,+)$ of the ``tropical'' semiring with the operations $(+,\times)$ of the usual algebra.
Following~\cite{oConnellSeppalainenZygouras14}, we call such a procedure \emph{geometric lifting} and the resulting bijections \emph{geometric}, from the
theory of geometric crystals~\cite{berensteinKazhdan07}.
In particular, the \emph{geometric RSK correspondence} is a birational mapping on matrices with positive real entries, while the \emph{geometric Sch\"utzenberger involution} is a birational involution on triangular (more generally, trapezoidal) arrays with positive real entries.
They have been both introduced by Kirillov~\cite{kirillov01} and further studied by Noumi and Yamada~\cite{noumiYamada04}.

The geometric RSK correspondence and Whittaker functions, together, explain the solvability of certain $(1+1)$-dimensional models of \emph{random directed polymers in a random environment}.
These statistical physics models were introduced in~\cite{huseHenley85} and have been the object of intense research over the past thirty years -- see~\cite{comets17} for a recent review.
By \emph{directed lattice path} of length $n-1$ we mean any sequence $\bm{\pi}=(\pi(1),\dots,\pi(n))\in (\N^2)^n$ such that $\norm{\pi(i+1) - \pi(i)}_1 =1$ for $1\leq i\leq n-1$ and $\norm{\pi(n) - \pi(1)}_1 = n-1$; only two consecutive directions are thus allowed for the whole path, for example south and east (see Figure~\ref{fig:overlappingPaths}).
Denote by $\Pi_{m,n}$ the set of all directed lattice paths from $(1,1)$ to $(m,n)\in\N^2$, and let $(W_{i,j})_{(i,j)\in\N^2}$ be a field of positive independent random weights, known as random environment.
We define on $\Pi_{m,n}$ the quenched probability, known as \emph{point-to-point polymer measure},
\begin{align}
\label{eq:p2pPolymerMeasure}
\polymer_{m,n}(\bm{\pi})
:= \frac{1}{Z_{m,n}} \prod_{(i,j)\in \bm{\pi}} W_{i,j}
&&\text{for } \bm{\pi} \in \Pi_{m,n} \, ,
\end{align}
where the normalizing \emph{point-to-point polymer partition function} is
\begin{equation}
\label{p2pPolymerPartitionFn}
Z_{m,n} := \sum_{\bm{\pi} \in \Pi_{m,n}} \prod_{(i,j)\in \bm{\pi}} W_{i,j} \, .
\end{equation}
We remark that, in the statistical physics literature, the directed path $\pi$ is often viewed as the trajectory of a $(1+1)$-dimensional simple walk and the random variables $W_{i,j}$ are viewed as Gibbs weights, i.e.\ exponential functions of a random energy divided by the temperature of the system.

If one applies the geometric RSK map to the (random) input matrix $(W_{i,j})_{1\leq i\leq m, \, 1\leq j\leq n}$, it turns out that the output matrix contains the partition function~\eqref{p2pPolymerPartitionFn} as the entry $(m,n)$.
This is a general fact.
Furthermore, for some specific choices of the random environment, the properties of the geometric RSK map also permit finding exact formulas for the distribution of the partition function.
The most studied exactly solvable polymer model is known as the \emph{log-gamma polymer} and was introduced in~\cite{seppalainen12}.
In this model, all the weights follow an inverse-gamma law (so that the corresponding Gibbs weights are log-gamma distributed).

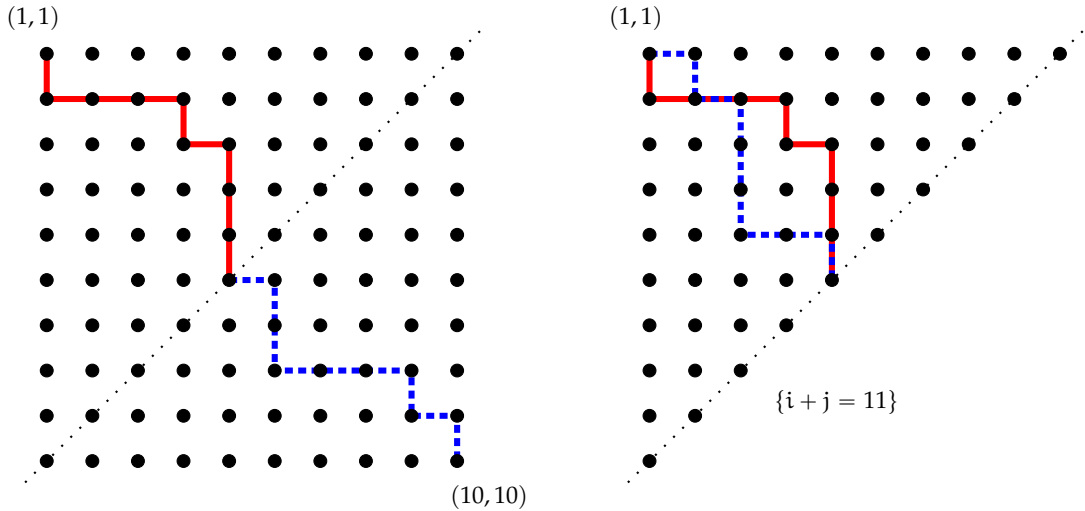
\begin{figure}
\centering
\begin{subfigure}[b]{.5\linewidth}
\centering
\captionsetup{width=.86\textwidth}%
\begin{tikzpicture}[scale=0.6, baseline, remember picture]
\draw[thick, loosely dotted] (10.5,-.5) -- (.5,-10.5);

\draw[line width=.8mm, color=red] (1,-1) -- (1,-2) -- (2,-2) -- (3,-2) -- (4,-2) -- (4,-3) -- (5,-3) -- (5,-4) -- (5,-5) -- (5,-6);

\draw[line width=.8mm, color=blue, densely dashed]
(5,-6) -- (6,-6) -- (6,-7) -- (6,-8) -- (7,-8) -- (8,-8) -- (9,-8) -- (9,-9) -- (10,-9) -- (10,-10);
	
\foreach \i in {1,...,10}{
\foreach \j in {1,...,10}{
		\node[draw,circle,inner sep=1.7pt,fill] at (\j,-\i) {};
	}
}

\node at (0.7,-0.2) {\footnotesize $(1,1)$};
\node at (10.7,-10.8) {\footnotesize $(10,10)$};
\end{tikzpicture}
\label{subfig:pointToPointPath}
\end{subfigure}%
\begin{subfigure}[b]{.5\linewidth}
\centering
\captionsetup{width=.86\textwidth}%
\begin{tikzpicture}[scale=0.6, baseline, remember picture]
\draw[thick, loosely dotted] (10.5,-.5) -- (.5,-10.5);

\draw[line width=.8mm, color=red] (1,-1) -- (1,-2) -- (2,-2) -- (3,-2) -- (4,-2) -- (4,-3) -- (5,-3) -- (5,-4) -- (5,-5) -- (5,-6);

\draw[line width=.8mm, color=blue, densely dashed] (1,-1) -- (2,-1) -- (2,-2) -- (3,-2) -- (3,-3) -- (3,-4) -- (3,-5) -- (4,-5) -- (5,-5) -- (5,-6);

\foreach \i in {1,...,10}{
		\foreach \j in {\i,...,10}{
		\node[draw,circle,inner sep=1.7pt,fill] at (11-\j,-\i) {};
	}
}

\node at (0.7,-0.2) {\footnotesize $(1,1)$};
\node at (5.1,-8.7) {\footnotesize $\{i+j=11\}$};
\vspace{10mm}
\end{tikzpicture}
\label{subfig:overlappingPaths}
\end{subfigure}%
\caption{On the left-hand figure, a directed lattice path from $(1,1)$ to $(n,n)$, with $n=10$.
The dotted antidiagonal line $\{i+j = n+1\}$ divides the path into two parts, denoted by a solid red line and a dashed blue line respectively.
Any such a path can be bijectively mapped into a pair of paths of length $n-1$ constrained to have the same endpoint.
This is illustrated on the right-hand figure, where the blue path is just reflected about the antidiagonal.}
\label{fig:overlappingPaths}
\end{figure}

Corwin, O'Connell, Sepp\"al\"ainen, and Zygouras~\cite{corwinOConnellSeppalainenZygouras14} linked the distribution of the log-gamma polymer partition function to Whittaker functions, in their integral formulation given by Givental~\cite{givental97}.
Their argument was based on the connection between the geometric RSK correspondence and $\GL_n(\R)$-Whittaker functions, analogous to the well-known relationship between the RSK map and Schur functions.
The analog of the Cauchy-Littlewood identity in this setting turned out to be a certain Whittaker integral identity due to Bump~\cite{bump89} and Stade~\cite{stade02}.

Subsequently, O'Connell, Sepp\"al\"ainen, and Zygouras~\cite{oConnellSeppalainenZygouras14} provided a new description of the geometric RSK as a composition of several local birational maps on the entries of the input matrix, deducing that the geometric RSK is volume preserving in log-log variables.
Besides, they began the study of the geometric RSK map in the presence of symmetry constraints, analyzing the corresponding polymer models and Whittaker measures.
This program drew inspiration from the work of Baik and Rains~\cite{baikRains01a} on RSK with symmetries, last passage percolation, and Schur measures, and aimed at studying their non-determinantal analogs in the polymer setting.
In particular, \cite{oConnellSeppalainenZygouras14} focused on symmetric input matrices, i.e.\ such that $W_{i,j}=W_{j,i}$ for all $i,j$, proving that the volume preserving property still holds in this setting.
This allowed studying the log-gamma polymer in a symmetric environment and obtaining the distribution of its partition function as a (different) Whittaker measure.
The corresponding Whittaker integral identity is equivalent to a formula for the Mellin transform of a $\GL_n(\R)$-Whittaker function due to Stade~\cite{stade01}.

The log-gamma polymer with point-to-line or point-to-half-line path geometries (with or without symmetry), where the polymer path has a fixed length but the endpoint is not fixed, has been analyzed by Bisi and Zygouras~\cite{bisiZygouras19a}.
Denote by $\Pi_{n}$ the set of all directed lattice paths of length $n-1$ starting at $(1,1)$.
Analogously to the point-to-point case, one can then define the \emph{point-to-line polymer measure}
\begin{align}
\label{eq:p2lPolymerMeasure}
\polymer_{n}(\bm{\pi})
:= \frac{1}{Z_{n}} \prod_{(i,j)\in \bm{\pi}} W_{i,j}
&&\text{for } \bm{\pi} \in \Pi_{n} \, ,
\end{align}
where the normalizing \emph{point-to-line polymer partition function} is
\begin{equation}
\label{p2lPolymerPartitionFn}
Z_{n} := \sum_{\bm{\pi} \in \Pi_{n}} \prod_{(i,j)\in \bm{\pi}} W_{i,j} \, .
\end{equation}
The main contribution of~\cite{bisiZygouras19a} was to express the law of $Z_n$ in terms of $\mathrm{SO}_{2n+1}(\R)$-Whittaker functions.
Their primary tool was the geometric RSK map extended to generic polygonal (not necessarily rectangular) input arrays, already used in~\cite{nguyenZygouras17}.

\subsection{Contributions of this work}
\label{subsec:intro_contribution}

In this work, we continue the program initiated in~\cite{oConnellSeppalainenZygouras14} of studying polymer models in symmetric environments and we focus on a \emph{persymmetric environment}, a case that was out of the scope of the approach of~\cite{oConnellSeppalainenZygouras14} and could not be covered therein.
Namely, we consider a weights' matrix $(W_{i,j})_{1\leq i,j\leq n}$ that is symmetric about the antidiagonal, i.e.\ such that $W_{i,j} = W_{n-j+1, n-i+1}$ for all $i,j$; a matrix with this property is usually called \emph{persymmetric}.
Notice that the \emph{point-to-point persymmetric polymer partition function} can be written as
\begin{equation}
\label{eq:overlappingPolymers}
Z_{n,n}
= \sum_{\bm{\pi} \in \Pi_{n,n}} \prod_{(i,j)\in \bm{\pi}} W_{i,j}
= \sum_{\substack{(a,b)\colon \\ a+b=n+1}}
\underbrace{\left(\sum_{\bm{\pi} \in \Pi_{a,b}}
\prod_{(i,j)\in \bm{\pi}} W'_{i,j} \right)}_{=Z'_{a,b}}
\underbrace{\left(\sum_{\tilde{\bm{\pi}} \in \Pi_{a,b}}
\prod_{(k,l)\in \tilde{\bm{\pi}}} W'_{k,l} \right)}_{=Z'_{a,b}} \, ,
\end{equation}
where $W'_{i,j}:= \sqrt{W_{i,j}}$ for $i+j=n+1$ and $W'_{i,j}:= W_{i,j}$ for $i+j<n+1$, so that $Z'_{a,b}$ is the point-to-point partition function with endpoint $(a,b)$ on the line $\{a+b=n+1\}$ and associated with the modified environment $(W_{i,j}')$.
The ``path transformation'' that justifies the identity above is illustrated in Figure~\ref{fig:overlappingPaths}.
Thus, remarkably, from the physical point of view, the point-to-point persymmetric polymer partition function can be interpreted as the \emph{replica partition function} for two polymer paths of length $n-1$ in the same environment $(W_{i,j}')$, constrained to coincide at the endpoint:
\begin{equation}
\label{eq:replica}
\Zrepl{n}
:= \sum_{\substack{(a,b)\colon \\ a+b=n+1}} \left(Z'_{a,b}\right)^2 \, .
\end{equation}
Replicas are important observables in 
statistical mechanics, as they can provide insights into the properties of the models.
For polymer models, replicas can shed light on localization phenomena~\cite{batesChatterjee20, comets17, chatterjee19, batesChatterjee20+, bates19}.
As a consequence of the connection with the persymmetric polymer, the present work leads to the computation of the Laplace transform of the replica partition function~\eqref{eq:replica}, which will be expressed as an integral of special functions called \emph{Whittaker functions} (more in-depth explanations will be given later on and the detailed formulas can be found in Section~\ref{sec:polymer}).

A key to the problem of studying the distribution of the persymmetric polymer partition function is to prove a \emph{volume preserving property} for the geometric RSK correspondence restricted to persymmetric matrices, as we now explain.
The image of a persymmetric matrix $\bm{w} = (w_{i,j})_{1\leq i,j\leq n}$ under the geometric RSK map is a matrix $\bm{t}= (t_{i,j})_{1\leq i,j\leq n}$ whose lower and upper triangular parts are Sch\"utzenberger dual of each other.
This is a consequence of the fact that the geometric RSK map commutes with the matrix transposition, together with Theorem~\ref{thm:gRSK-Schutz}.
Therefore, the map $(w_{i,j})_{i+j \leq n+1} \mapsto (t_{i,j})_{i\leq j}$ is a bijection.
Proving a volume preserving property for such a bijection would permit obtaining the distribution of the persymmetric polymer partition function as a Whittaker measure, using similar techniques as in~\cite{oConnellSeppalainenZygouras14}.
However, as the persymmetric constraints are ``non-local'' with respect to the order of composition of the local maps, it is not possible to prove the desired property from the geometric RSK construction as given in~\cite{oConnellSeppalainenZygouras14}.

Our alternative approach to the analysis of the persymmetric polymer, instead, will consist in constructing and studying what we call the \emph{geometric Burge correspondence}.
We define it as a sequence of local birational maps, as done in~\cite{oConnellSeppalainenZygouras14} for geometric RSK, via geometric lifting of the piecewise linear description of the combinatorial Burge correspondence presented (with minor differences) in~\cite{vanLeeuwen05, krattenthaler06, betea18}.
Notice that, wherever possible, we work with generic Young-diagram-shaped arrays instead of matrices.
One of our main contributions is Theorem~\ref{thm:gRSK-Burge-Schutz}, which links together the three geometric correspondences (RSK, Burge, and Sch\"utzenberger) via column/row mirror reflection of the input matrix.
Its combinatorial version, i.e.\ Theorem~\ref{thm:RSK-Burge-Schutz}, is a classical result.
However, the approach required to prove Theorem~\ref{thm:gRSK-Burge-Schutz} differs significantly from the known combinatorial proofs; in particular, we will use the description of the three geometric correspondences in terms of local maps and apply induction several times.
Interestingly, our proof reduces to certain local `commutation relations' -- see Proposition 3.3 -- that, to our knowledge, were not even known in the combinatorial setting.

Let us now connect this construction to polymer models.
Define $\Pi^{*}_{m,n}$ to be the set of all directed lattice paths from $(m,1)$ to $(1,n)$; notice that this set is ``dual'' to $\Pi_{m,n}$, in the sense that its paths connect the other pair of opposite vertices of the rectangle $[1,m]\times [1,n]$.
Define also the \emph{dual partition function} as
\begin{equation}
\label{p2pPolymerPartitionFn*}
Z^*_{m,n}
:= \sum_{\pi \in \Pi^{*}_{m,n}} \prod_{(i,j)\in \pi} W_{i,j} \, .
\end{equation}
We then have that the image $\bm{T} = (T_{i,j})_{1\leq i\leq m, \, 1\leq j\leq n}$ of the matrix $\bm{W} = (W_{i,j})_{1\leq i\leq m, \, 1\leq j\leq n}$ under the geometric Burge correspondence contains the dual partition function $Z^*_{m,n}$ as the entry $T_{m,n}$.
This will be an immediate consequence of Theorem~\ref{thm:gRSK-Burge-Schutz} and of the aforementioned fact that the geometric RSK output matrix contains the ``usual'' partition function $Z_{m,n}$ defined in~\eqref{p2pPolymerPartitionFn} as the entry $T_{m,n}$.

It is clear that the partition function on a persymmetric input matrix $\bm{W}$ coincides with the dual partition function on the symmetric input matrix obtained by reversing the rows of $\bm{W}$.
This, along with the observations above, explains why we can use the geometric Burge correspondence to study the persymmetric polymer partition function.
This approach turns out to be far more convenient, as we need to deal with symmetric (instead of persymmetric) input matrices.
As all the local maps of interest are volume preserving in log-log variables, the geometric Burge correspondence also is.
Furthermore, the geometric Burge correspondence, like the geometric RSK, behaves nicely when restricted to symmetric matrices: the image of a symmetric matrix is also symmetric, and the volume preserving property continues to hold almost trivially.
Using also other properties of the geometric Burge correspondence that we establish along the way (either via Theorem~\ref{thm:gRSK-Burge-Schutz} or via the local maps definition), we will be able to obtain the distribution of the persymmetric polymer partition function as a $\GL_n(\R)$-Whittaker measure.

The Whittaker measure that we find for the persymmetric polymer coincides, for a certain choice of parameters, with the one for the symmetric polymer obtained in~\cite{oConnellSeppalainenZygouras14}.
This seems to be a highly non-trivial fact: we are not aware of a direct proof based on the definition of the polymer models.
We also mention that a number of other very interesting distributional identities in integrable polymer models have been observed in recent papers: \cite{borodinGorinWheeler19, galashin20} are based on six vertex models and Yang-Baxter equations (see also~\cite{borodinBufetov20+} for related work), whereas~\cite{dauvergne20} relies on RSK methods.
However, the distributional equality we have observed in this work, for polymers on \emph{symmetric} input matrices, does not appear in the above works.

As we have described above, our construction of the geometric Burge correspondence allows us to connect to polymer models.
We should mention that the \emph{combinatorial} Burge correspondence has been already used to deal with last passage percolation models, which (in the statistical physics terminology) are `zero temperature degenerations' of polymer partition functions.
This is e.g.\ the case in~\cite{betea18}, where the main focus is on last passage percolation with point-to-half-line path geometry.
Actually, one could argue that the Burge correspondence had also been implicitly used in~\cite{baikRains01a} to provide a combinatorial bijection for the RSK map restricted to persymmetric matrices and ultimately study persymmetric last passage percolation.
Our approach can, thus, be considered as a geometric lifting of Baik and Rains's construction.
However, in the combinatorial setting everything can be phrased in terms of the Sch\"utzenberger involution and there is no reason to give too much attention to the Burge map itself.
On the other hand, in the geometric setting, in order to obtain the required volume preserving property for the geometric RSK map restricted to persymmetric matrices, one requires a much deeper understanding of the geometric Burge map itself and its relation to the geometric versions of the RSK and Sch\"utzenberger maps.
This is contained in our construction via local maps together with Proposition~\ref{prop:compDiagMaps} and Theorem~\ref{thm:gRSK-Burge-Schutz}.
Proposition~\ref{prop:compDiagMaps}, which is one of the key ingredients used in the proof of Theorem~\ref{thm:gRSK-Burge-Schutz}, gives a remarkable (and seemingly non-trivial) relation satisfied by the local maps involved.

In terms of asymptotic analysis, the partition functions of several polymer models are expected to be in the \emph{KPZ universality class}.
Although there has recently been important progress in this regard~\cite{dimitrov20, virag20}, this doesn't cover the setting of polymers with symmetries.
For the latter, formal asymptotics have been achieved in~\cite{barraquandBorodinCorwin20}.
We do not address such issues in the present work.

\vskip 2mm

\noindent
{\bfseries Organization of the article.}
In Section~\ref{sec:combinatorialCorrespondences} we introduce some notation and recap known piecewise linear descriptions of the classical combinatorial RSK, Burge, and Sch\"utzenberger bijections, to prepare the reader for the geometric lifting.
In Section~\ref{sec:geometricCorrespondences} we introduce the geometric Burge correspondence as a composition of local birational maps, recall analogous (known) definitions of the geometric RSK and Sch\"utzenberger maps, and explain their interconnection.
In Section~\ref{sec:Burge_properties} we prove that the geometric Burge correspondence is volume preserving in log-log variables, as well as other useful properties.
Section~\ref{sec:Burge_symmetric} deals with the restriction of the geometric Burge correspondence to symmetric input arrays and the specialization of its properties in this setting.
In Section~\ref{sec:polymer} we consider the persymmetric polymer (or equivalently the replica) partition function, proving that its distribution is given by a $\GL_n(\R)$-Whittaker measure and deducing the corresponding Whittaker integral identity; we also discuss
the relation to the symmetric polymer studied in~\cite{oConnellSeppalainenZygouras14}.

\section{Combinatorial bijections and their piecewise linear formulation}
\label{sec:combinatorialCorrespondences}

The RSK, Burge and Sch\"utzenberger correspondences are combinatorial bijections, classically constructed via row insertion, column insertion, and \emph{jeu de taquin} operations, respectively.
In this section, we give a brief and expository reminder of these bijections and their reformulation in terms of piecewise linear transformations. This will motivate their geometric lifting that we perform in Section~\ref{sec:geometricCorrespondences}. Besides the classical references, e.g.~\cite{fulton97}, we refer to~\cite{voWhitney83} for several combinatorial aspects and interesting relations between these correspondences.
We also refer to~\cite{berensteinKirillov96, noumiYamada04, krattenthaler06, oConnellSeppalainenZygouras14, zygouras18} for more details on the RSK correspondence and its piecewise linear formulations.
Piecewise linear descriptions of the Burge correspondence (often exposed in the formalism of Fomin growth diagrams) can be found in~\cite{vanLeeuwen05, krattenthaler06, betea18}; in particular, the version that we present here is closer to~\cite{betea18}.
Finally, we refer to~\cite{berensteinKirillov96} for the piecewise linear formulation of the Sch\"utzenberger involution.

\vskip 2mm

Let $\N:=\{1,2,3,\dots\}$.
We will view any \textbf{Young diagram} $\lambda$ as the partition $(\lambda_1,\lambda_2,\dots)$ such that $\lambda_i$ is the number of boxes in the $i$-th row of $\lambda$ or, equivalently, as the index set $\{(i,j)\in\N^2 \colon j\leq \lambda_i\}$ of its boxes.
We will say that $(m,n)$ is a \textbf{border box} of a Young diagram $\lambda$ if it is the last box of the corresponding diagonal, i.e.\ if $(m+1,n+1) \notin\lambda$.
In particular, we will call $(m,n)$ a \textbf{corner box} if $\lambda\setminus\{(m,n)\}$ is a Young diagram, i.e.\ if none of the three boxes $(m,n+1)$, $(m+1,n)$, $(m+1,n+1)$ belongs to $\lambda$.
For example, for the partition $ (2,2,1) \equiv \{(1,1), (1,2), (2,1), (2,2), (3,1)\}$, all boxes except $(1,1)$ are border boxes, but only $(2,2)$ and $(3,1)$ are corner boxes.
We will also denote a {\bfseries rectangular Young diagram} by $m\times n := \{1,\dots,m\} \times \{1,\dots,n\}$.

For a given Young diagram $\lambda$, let $\R^{\lambda}$ be the set of $\lambda$-shaped arrays $P = (p_{i,j})_{(i,j)\in\lambda}$ of real numbers.
If the values $p_{i,j}$ are restricted to be positive integers and also have the property that are weakly increasing in $j$, for any fixed $i$, and strictly increasing in $i$, for any fixed $j$, then $P$ is called a {\bf semistandard Young tableau}.
A useful reparametrization of Young tableaux goes under the name of \textbf{Gelfand-Tsetlin patterns}.
Given a Young tableau $P=(p_{i,j})_{(i,j)\in\lambda}$, its corresponding Gelfand-Tsetlin pattern $\bm{u} = ( u_{i,j})_{i,j\geq 1}$ is given by
\begin{equation}
\label{eq:GT}
u_{i,j} := \#\{1\leq k\leq \lambda_j \colon p_{j,k}\leq i\} \, .
\end{equation}
In words, $u_{i,j}$ is the number of entries in the $j$-th row of $P$ that are less than or equal to $i$.
Assuming that the shape $\lambda$ of $P$ is of length at most $m$ and the entries $p_{i,j}$ are in the alphabet $\{1,\dots,n\}$, one can view $\bm{u}$ as a trapezoidal array $(u_{i,j})_{1\leq i\leq n, \, 1\leq j\leq i \wedge m}$ -- the $u_{i,j}$'s not in this range of indices being redundant.
By construction, the shape $\lambda$ of $P$ corresponds to the bottom row $(u_{n,1},u_{n,2},\dots)$ of $\bm{u}$.
Moreover, it is easy to verify that the Gelfand-Tsetlin variables, as defined in~\eqref{eq:GT}, satisfy the interlacing conditions
\[
u_{i+1,j+1}\leq u_{i,j}\leq u_{i+1,j} \, .
\]

\vskip 2mm
\noindent
{\bfseries Piecewise linear maps.}
We collect here, for convenience, all the piecewise linear maps that represent the building blocks of the piecewise linear formulation of all our combinatorial maps (RSK, Burge, and Sch\"utzenberger).
Such a formulation, though, will be introduced for each combinatorial map separately, later in this section.
We will denote all the piecewise linear maps by letters with a ``vee accent'', to distinguish them from the corresponding maps in the geometric setting of the next sections.

For a Young diagram $\lambda$ and $(i,j)\in\lambda$, we define $\Csfa_{i,j}, \Csfb_{i,j}, \Csfc_{i,j} \colon \R^{\lambda} \to \R^{\lambda}$ as the \textbf{local maps} that act on $\bm{w}\in \R^{\lambda}$ by only modifying $w_{i,j}$ according to the following rules:
\begin{align}
\label{eq:a_comb}
&\Csfa_{i,j}\colon w_{i,j} \longmapsto w_{i-1,j} \vee w_{i,j-1} + w_{i+1,j} \wedge w_{i,j+1} - w_{i,j} \, , \\
\label{eq:b_comb}
&\Csfb_{i,j}\colon w_{i,j} \longmapsto w_{i-1,j} \vee w_{i,j-1} + w_{i,j+1} - w_{i,j} \, , \\
\label{eq:c_comb}
&\Csfc_{i,j}\colon w_{i,j} \longmapsto w_{i-1,j} \vee w_{i,j-1} + w_{i,j}  \, .
\end{align}
For two distinct indices $(i,j),(k,l)\in\lambda$, we also define $\Csfd^{k,l}_{i,j} \colon \R^{\lambda} \to \R^{\lambda}$ as the local map that acts on an array $\bm{w}\in \R^{\lambda}$ by only modifying $w_{i,j}$ and $w_{k,l}$ as follows:
\begin{equation}
\label{eq:d_comb}
\Csfd^{k,l}_{i,j}\colon
\begin{cases}
w_{i,j} \longmapsto (w_{i-1,j} \vee w_{i,j-1} + w_{k,l}) \wedge w_{i,j} \, , \\
w_{k,l} \longmapsto w_{k,l} - w_{k,l} \wedge (w_{i,j} - w_{i-1,j} \vee w_{i,j-1}) + w_{i+1,j} \wedge w_{i,j+1} - w_{i,j} \, .
\end{cases}
\end{equation}
For $i=1$ and/or $j=1$, the values of $w_{i-1,j}$ and $w_{i,j-1}$ are determined by the following convention: $w_{0,1}=w_{1,0}=0$ and $w_{0,k}=w_{k,0}=-\infty$ for all $k>1$.
Notice that $\Csfa_{i,j}$ and $\Csfd_{i,j}^{k,l}$ also involve entries $w_{i+1,j}$ and $w_{i,j+1}$, so for these maps to be well defined we assume that $(i+1,j),(i,j+1) \in \lambda$; likewise, for $\Csfb_{i,j}$ we assume that $(i,j+1) \in \lambda$.
It is clear that all the above local maps are bijective, but only $\Csfa_{i,j}$ and $\Csfb_{i,j}$ are involutions.
Furthermore, they all satisfy several trivial commutative properties, as each modifies only one or two entries of the input array.

From now on, for any $n\in\Z$ we will refer to the $n$-th diagonal of an array $\bm{w}\in \R^{\lambda}$ as the sequence of its entries $w_{i,j}$ such that $j-i=n$.
Let us now define, for all $(k,l)\in\lambda$, the following \textbf{diagonal maps}:
\begin{align}
\label{eq:rho_comb}
\Crho_{k,l} &:= \Csfa_{k-h+1,l-h+1} \circ \Csfa_{k-h+2,l-h+2} \circ \dots \circ \Csfa_{k-1,l-1} \circ \Csfc_{k,l} \, , \\
\label{eq:sigma_comb}
\Csigma_{k,l} &:= \Csfa_{k-h+1,l-h+1} \circ \Csfa_{k-h+2,l-h+2} \circ \dots \circ \Csfa_{k-1,l-1} \circ \Csfb_{k,l} \, , \\
\label{eq:tau_comb}
\Ctau_{k,l} &:= \Csfc_{k,l} \circ \Csfd^{k,l}_{k-1,l-1} \circ \dots \circ \Csfd^{k,l}_{k-h+2,l-h+2} \circ \Csfd^{k,l}_{k-h+1,l-h+1} \, ,
\end{align}
where $h:=k \wedge l$.
The terminology ``diagonal map'' comes from the fact that any of these maps indexed by $(k,l)$ only modifies the $(l-k)$-th diagonal of the input array.
It is likewise clear that any two diagonal maps commute if they act on diagonals that are not the same nor neighboring.
As compositions of bijections, diagonal maps are all bijective.
Furthermore, notice that $\Csigma_{k,l}$ is a composition of commuting involutions, hence it is itself an involution.
Diagonal maps~\eqref{eq:rho_comb}, \eqref{eq:sigma_comb}, and~\eqref{eq:tau_comb} will be involved in the construction of the RSK,  Burge, and Sch\"utzenbeger correspondences, respectively.

\vskip 2mm
\noindent
{\bfseries The Robinson-Schensted-Knuth (RSK) correspondence.}
This correspondence is based on an algorithm called {\bfseries row insertion}, which we will now describe.
Row inserting a positive integer $i$ into a given semistandard Young tableau works as follows: if $i$ is larger than or equal to all the entries of the first row of the tableau, then a new box containing $i$ is added at the end of the first row and the procedure stops.
Otherwise, $i$ replaces the \emph{first} number of the first row that is strictly larger than $i$.
The replaced number, call it $j$, is now ``bumped'' and inserted into the second row of the tableau in the same way.
The procedure continues until one of the bumped numbers is placed at the end of a row of the tableau, yielding a new semistandard Young tableau with one extra box.

Now, any \emph{word} $w$ in the alphabet $\{1,\dots,n\}$ can be decomposed into a sequence of $m$ increasing words $w_1$, \dots, $w_m$ (for some $m$):
\begin{equation}
\label{eq:word}
w = \underbrace{1^{w_{1,1}} 2^{w_{1,2}} \cdots n^{w_{1,n}}}_{w_1} \underbrace{1^{w_{2,1}} 2^{w_{2,2}} \cdots n^{w_{2,n}}}_{w_2} \cdots \underbrace{1^{w_{m,1}} 2^{w_{m,2}} \cdots n^{w_{m,n}}}_{w_m} \, ,
\end{equation}
where all $w_{i,j}$'s are non-negative integers and $i^r\equiv i\cdots i$ denotes a sequence of $r$ consecutive letters $i$.
The RSK algorithm acts on a word by successively row inserting all its letters.
More precisely, one starts by inserting the first letter of $w$ into the empty tableau $P_0=\emptyset$, thus obtaining a tableau $P_1$; then one inserts the second letter of $w$ into $P_1$, obtaining a new tableau $P_2$.
The process continues in the same way until all letters of $w$ have been inserted, thus yielding a tableau $P$ with as many boxes as the length of $w$.
In parallel with the $P$-tableau, one can construct a $Q$-tableau, which records the shapes of the successive sequence of (intermediate) $P$-tableaux after the row insertion of each increasing word $w_k$.
Namely, every time a letter of $w_k$ in inserted into the $P$-tableau, thus yielding a new $P$-tableau with one extra box, a box containing $k$ is also added to the $Q$-tableau in the same position.
The RSK algorithm thus yields a bijection between a word $w$ in the alphabet $\{1,\dots,n\}$ with $m$ increasing subwords and a pair of semistandard Young tableaux $(P,Q)$ of the same shape $\lambda$ of length at most $m\wedge n$, such that the entries of $P$ are in $\{1,\dots,n\}$ and the entries of $Q$ are in $\{1,\dots,m\}$.

Notice that the word $w$ is naturally encoded by a matrix $\bm{w}=(w_{i,j})_{1\leq i\leq m, \, 1\leq j \leq n} \in \Z_{\geq 0}^{m\times n}$.
Let now $\bm{u}$ and $\bm{v}$ be the Gelfand-Tsetlin patterns that bijectively correspond to the output tableaux $P$ and $Q$, respectively:
\begin{equation}
\label{eq:uv}
\bm{u} = ( u_{i,j})_{1\leq i\leq n, \, 1\leq j\leq i \wedge m} \, ,
\qquad\qquad
\bm{v} = ( v_{i,j})_{1\leq i\leq m, \, 1\leq j\leq i\wedge n } \, .
\end{equation}
As $P$ and $Q$ are of the same shape, we can ``glue'' together $\bm{u}$ and $\bm{v}$ along their identical bottom rows $(u_{n,1}, u_{n,2}, \dots) = (v_{m,1}, v_{m,2}, \dots) = \lambda$.
The result of this ``gluing'' is an $m\times n$ matrix, which we denote\footnote{This notation should not be confused with the notation for a skew partition $\lambda/\mu$.} by $\bm{t} = (\bm{u} \backslash \bm{v})$, defined through the following relations:
\begin{equation}
\label{eq:uvt}
u_{i,j} := t_{m-j+1,i-j+1} \, ,
\qquad\qquad
v_{i,j} := t_{i-j+1,n-j+1} \, .
\end{equation}
We will also say that $\bm{u}$ and $\bm{v}$ are the {\bfseries lower and upper triangular/trapezoidal parts} of $\bm{t}$, respectively.
For instance, when $m<n$, we have that
\begin{equation}
\label{eq:RSKoutputMatrix}
\bm{t} = (\bm{u} \backslash \bm{v})
=
\begin{tikzpicture}[baseline=(current bounding box.center)]
\matrix (m) [matrix of math nodes,nodes in empty cells,right delimiter={)},left delimiter={(} ]{
u_{m,m} && &u_{n,m}=v_{m,m} & &v_{1,1} \\
& & & & & \\
& & & & & \\
& & & & & \\
u_{1,1} && &u_{m,1} & &u_{n,1}=v_{m,1} \\
} ;
\draw[loosely dotted] (m-1-1)-- (m-5-1);
\draw[loosely dotted] (m-1-1)-- (m-5-4);
\draw[loosely dotted] (m-5-1)-- (m-5-4);
\draw[loosely dotted] (m-1-1)-- (m-1-4);
\draw[loosely dotted] (m-1-4)-- (m-5-6);
\draw[loosely dotted] (m-5-4)-- (m-5-6);
\draw[loosely dotted] (m-1-6)-- (m-5-6);
\draw[loosely dotted] (m-1-4)-- (m-1-6);
\end{tikzpicture}
\, .
\end{equation}
In this sense, the RSK correspondence can be also viewed as a map $\bm{w} \mapsto \bm{t}$ between $m\times n$ non-negative integer matrices.
Notice that the pattern $\bm{v} := ( v_{i,j})_{1\leq i\leq m, \, 1\leq j\leq i\wedge n }$ corresponding to $Q$ has the property that the diagonal $(v_{k,1},\dots,v_{k,k\wedge n })$ is the shape of the tableau obtained after the row insertion of $w_k$.
 
Recall that entry $u_{i,j}$ of the Gelfand-Tsetlin pattern $\bm{u}$, which corresponds to tableau $P$, encodes the numbers of entries in the $j$-th row of $P$ that are smaller than or equal to $i$.
Using this fact, we will now briefly describe how the combinatorially defined RSK correspondence translates to piecewise linear transformations on Gelfand-Tsetlin patterns.
Suppose that, after inserting the first $k-1$ words, we have obtained an (intermediate) $P$-tableau corresponding to the Gelfand-Tsetlin pattern $\bm{u}=(u_{i,j})_{1\leq i\leq n, \, 1\leq j\leq i \wedge (k-1)}$.
Next, row inserting word $w_k$ has the following effect: the number of ones in the $P$-tableau will increase to $u_{1,1}'= u_{1,1}+w_{k,1}$, after the insertion of $1^{w_{k,1}}$.
The inserted ones will bump a number of twos from the first row, which will then be row inserted in the second row.
The number of twos that are bumped equals $(u_{11}'-u_{1,1}) \wedge ( u_{2,1}-u_{1,1})= u_{1,1}' \wedge u_{2,1}-u_{1,1}$; as a result, the number of twos in the second row will become $u_{22}' = u_{22} + (u_{11}'\wedge u_{2,1} - u_{11})$. 
After that, the row insertion of $2^{w_{k,2}}$ will change $u_{2,1}$ to $u_{2,1}'=w_2 + u_{2,1}\vee u_{1,1}'$.
The row insertion of the twos leads to a bumping of threes and the process continues in the same fashion through analogous piecewise linear transformation.
Consider now transformations $\Crho_{k,l}$ defined in~\eqref{eq:rho_comb}.
The change of $u_{1,1}$ to $u_{1,1}'$ is encoded through the application of $\Crho_{k,1}$, while the change of $u_{2,1}$ and $u_{2,2}$ to  $u_{2,1}'$ and $u_{2,2}'$, respectively, is encoded through the application of $\Crho_{k,2}$.
Similarly, transformation $\Crho_{k,3}$ will encode the changes of $u_{3,1}$, $u_{3,2}$, and $u_{3,3}$ after the insertion of the threes and the corresponding bumping process, and so on.

As the expository description above suggests, the Gelfand-Tsetlin pattern $\bm{u}$ can be constructed by repeatedly applying maps of type $\Crho_{k,l}$.
Actually, the same happens for the pattern $\bm{v}$ that corresponds to the $Q$-tableau, as we now briefly argue.
A remarkable \emph{symmetry property} of the RSK algorithm is that transposing the input matrix $\bm{w}$ amounts to swapping the roles of the resulting Gelfand-Tsetlin patterns $\bm{u}$ and $\bm{v}$ (or equivalently the $P$- and $Q$-tableaux).
Therefore, $\bm{v}$ can be constructed via the same diagonal maps $\Crho_{k,l}$'s applied on the transposed matrix.
We conclude that the whole output matrix $\bm{t} = (\bm{u} \backslash \bm{v})$ is obtained via repeated applications of these diagonal maps.

At this point, RSK can be extended in two natural ways: firstly, it can be seen as a bijective map between arrays indexed by a Young diagram (the case of matrices thus corresponding to rectangular Young diagrams); secondly, it can be seen as a map between arrays with \emph{real} entries instead of non-negative integer entries (as the piecewise linear maps are still well defined).
In this general framework, the RSK correspondence $\CsfK^{\lambda}\colon\R^{\lambda} \to \R^{\lambda}$ on real $\lambda$-shaped arrays\footnote{The choice of the letter $\CsfK$ refers to Knuth.} is defined by
\begin{align}
\label{eq:RSK}
\CsfK^{\lambda} &:= \Crho_{i_n,j_n} \circ \Crho_{i_{n-1},j_{n-1}} \circ \dots \circ \Crho_{i_1,j_1} \, ,
\end{align}
where $((i_1,j_1),\dots,(i_n,j_n))$ is \emph{any} sequence of distinct boxes of $\lambda$ such that, for all $1\leq k\leq n$, $\lambda^{(k)} := \{(i_1,j_1),\dots, (i_k,j_k)\}$ is a Young diagram, and $\lambda^{(n)} = \lambda$.
Notice that for such a sequence there might be several choices, which all lead to equivalent definitions of $\CsfK^{\lambda}$, due to the commutative properties of the diagonal maps.
For instance, we have that
\[
\CsfK^{(2,1,1)} = \Crho_{3,1} \circ \Crho_{2,1} \circ \Crho_{1,2} \circ \Crho_{1,1}
= \Crho_{3,1} \circ \Crho_{1,2} \circ \Crho_{2,1} \circ \Crho_{1,1}
= \Crho_{1,2} \circ \Crho_{3,1} \circ \Crho_{2,1} \circ \Crho_{1,1} \, .
\]
Even though $\CsfK^{\lambda}$ is defined for $\lambda$-shaped arrays, it may also be applied to $\mu$-shaped arrays, for any $\mu \supseteq \lambda$, by acting on the $\lambda$-part of the diagram and leaving all entries indexed by $\mu/ \lambda$ unchanged; in such a case, we do \emph{not} use the simplified notation $\CsfK$, to avoid ambiguity.

Finally, let us highlight that the RSK correspondence features a simple recursive definition in terms of corner boxes: if $(m,n)$ is a corner box of $\lambda$, then
\[
\CsfK^{\lambda} =
\Crho_{m,n} \circ \CsfK^{\lambda\setminus\{(m,n)\}} \, ,
\]
where by convention $\sfK^{\emptyset}=\id$.

\vskip 2mm
\noindent
{\bfseries The Burge correspondence.}
This correspondence is based on an algorithm called {\bf column insertion}, somehow ``dual'' to row insertion.
To column insert a positive integer $i$ into a given semistandard Young tableau, one proceeds as follows.
If $i$ is \emph{strictly} larger than all the entries of the first \emph{column}\footnote{As opposed to the first \emph{row} in the case of row insertion.} of the tableau, then a new box containing $i$ is added at the end of the first column and the procedure stops.
Otherwise, $i$ replaces the first number of the first column that is larger than or equal to $i$.
The replaced number, call it $j$, is now ``bumped'' and inserted into the second column of the tableau in the same way.
The procedure continues until one of the bumped numbers is placed at the end of a column of the tableau, yielding a new semistandard Young tableau with one extra box (it is fairly easy to check that the insertion preserves the strict monotonicity of columns and the weak monotonicity of rows).

The Burge correspondence uses the column insertion (instead of the row insertion in the RSK correspondence) to map a word $w$, as in~\eqref{eq:word}, onto a pair of Young tableaux $(P,Q)$ of the same shape.
A \emph{second difference} from the RSK correspondence is that the letters of each increasing word $w_k$ are read in \emph{reverse order}; namely, the Burge correspondence successively column inserts the letters of $w$, reading them as follows:
\begin{equation}
\label{eq:wordRev}
\underbrace{n^{w_{1,n}} \cdots 2^{w_{1,2}} 1^{w_{1,1}}}_{w_1 \text{ reverse}}
\underbrace{n^{w_{2,n}}\cdots 2^{w_{2,2}} 1^{w_{2,1}}}_{w_2 \text{ reverse}}
\cdots
\underbrace{n^{w_{m,n}}\cdots 2^{w_{m,2}} 1^{w_{m,1}}}_{w_m \text{ reverse}} \, .
\end{equation}
To obtain the $P$-tableau, one constructs a sequence $(\emptyset=P_0, P_1, P_2 \dots, P_L = P)$ of $L$ intermediate tableaux (where $L=\sum_{i,j} w_{i,j}$ is the length of the letter $w$) that starts from the empty tableau and ends at the final $P$-tableau: for $1\leq l\leq L$, $P_l$ is obtained from $P_{l-1}$ by column inserting the $l$-th letter of~\eqref{eq:wordRev} into $P_{l-1}$.
Finally, similarly to RSK, the $Q$-tableau in the Burge correspondence records the sequence of shapes of the intermediate $P$-tableaux after the column insertion of each increasing word $w_k$ (read in reverse order).

Similarly to RSK, the Burge correspondence can be equivalently viewed as a bijection between a matrix $\bm{w}=(w_{i,j})_{1\leq i \leq m,\, 1\leq j\leq n}\in\Z_{\geq 0}^{m\times n}$ and a pair $(\bm{u},\bm{v})$ of Gelfand-Tsetlin patterns~\eqref{eq:uv} with the same bottom row.
Again, the two patterns can be glued together into an matrix $\bm{t} = (\bm{u} \backslash \bm{v})$ as in~\eqref{eq:uvt}-\eqref{eq:RSKoutputMatrix}, in order to view the Burge correspondence as a map between $m\times n$ non-negative integer matrices.

Let us now briefly describe how the combinatorial description of the Burge correspondence can be viewed as a sequence of piecewise linear transformations on Gelfand-Tsetlin patterns.
Suppose that we have inserted the first $k-1$ words, thus obtaining an intermediate $P$-tableau corresponding to a Gelfand-Tsetlin pattern $\bm{u}=(u_{i,j})_{1\leq i\leq n, \, 1\leq j\leq i\wedge (k-1)}$.
We now column insert word $w_k$, reading it in reverse order, i.e.\ as $n^{w_{k,n}}\cdots 2^{w_{k,2}}1^{w_{k,1}}$.
To convey the main idea in the simplest case, let us assume that $n=2$, so that the intermediate $P$-tableau has at most two rows, filled with ones and twos only.
When we column insert the first few twos, we start filling with the twos the $u_{2,2} - u_{1,1}$ ``free spaces'' in the second row of $P$.
However, we cannot insert more than $u_{2,2} - u_{1,1}$ twos in the second row of $P$, otherwise the strict monotonicity of the columns would be violated; hence, if at some point we run out of such ``free spaces'', the extra $w_{k,2} - w_{k,2} \wedge (u_{1,1} - u_{2,2})$ twos will end up in the first row.
As a result, on the one hand the number $u_{2,2}$ of twos in the second row changes to $u_{2,2}'= (w_{k,2} + u_{2,2}) \wedge u_{1,1}$; on the other hand, the total length $u_{2,1}$ of the first row of $P$ increases to $u'_{2,1} = u_{2,1} + (w_{k,2} - w_{k,2} \wedge (u_{1,1} - u_{2,2}))$.
After the twos have been inserted, we column insert the $w_{k,1}$ ones, placing them all in the first row.
This has the effect of increasing by $w_{k,1}$ both the number of ones of the first row and the total length of the first row.
Namely, $u_{1,1}$ changes to $u_{1,1}' = u_{1,1} + w_{k,1}$ and $u'_{2,1}$ changes to the final value $u_{2,1}'' = u'_{2,1}+ w_{k,1} = u_{2,1} + w_{k,2} - w_{k,2} \wedge (u_{1,1} - u_{2,2}) + w_{k,1}$.
The change of $u_{1,1}$ to $u_{1,1}'$ represents the action of transformation $\Ctau_{k,1} := \Csfc_{k,1}$ from~\eqref{eq:c_comb} and~\eqref{eq:tau_comb}, whereas the change of $(u_{2,1}, u_{2,2})$ to $(u_{2,1}'', u_{2,2}')$ can be viewed as the action of transformation $\Ctau_{k,2} := \Csfc_{k,2}\circ \Csfd_{k-1,1}^{k,2}$ from~~\eqref{eq:c_comb}, \eqref{eq:d_comb}, and~\eqref{eq:tau_comb} (the index $k$ refers to the fact that the $k$-th increasing word $w_k$ is being inserted).

The construction of $\bm{u}$ can thus be achieved via repeated applications of $\Ctau_{k,l}$'s.
As for RSK, a \emph{symmetry property} holds for the Burge correspondence as well: if $\bm{w}$ is mapped onto $(\bm{u},\bm{v})$, then the transpose of $\bm{w}$ is mapped onto $(\bm{v},\bm{u})$.
Therefore, $\bm{v}$ can be constructed by applying maps $\Ctau_{k,l}$'s on the transposed matrix.
We conclude that the whole output matrix $\bm{t} = (\bm{u} \backslash \bm{v})$ is obtained via repeated applications of these diagonal maps.

Thanks to its piecewise linear description, the Burge correspondence can be also extended as a bijection between Young-diagram-shaped arrays with real entries.
In this general framework, it is defined\footnote{The choice of the letter $\CsfB$ refers to Burge.} as the map $\CsfB^{\lambda}\colon\R^{\lambda} \to \R^{\lambda}$ given by
\begin{align}
\label{eq:Burge}
\CsfB^{\lambda} &:= \Ctau_{i_n,j_n} \circ \Ctau_{i_{n-1},j_{n-1}} \circ \dots \circ \Ctau_{i_1,j_1} \, ,
\end{align}
where $((i_1,j_1),\dots,(i_n,j_n))$ is \emph{any} sequence of distinct boxes of $\lambda$ such that, for all $1\leq k\leq n$, $\lambda^{(k)} := \{(i_1,j_1),\dots, (i_k,j_k)\}$ is a Young diagram, and $\lambda^{(n)} = \lambda$.
As in RSK, even though $\CsfB^{\lambda}$ is defined for $\lambda$-shaped arrays, it may also be applied to $\mu$-shaped arrays, for any $\mu \supseteq \lambda$, by acting on the $\lambda$ part of the diagram and leaving all entries indexed by $\mu/ \lambda$ unchanged; in such a case, we do \emph{not} use the simplified notation $\CsfB$, to avoid ambiguity.

Finally, we may rephrase the definition~\eqref{eq:Burge} of the Burge correspondence in a recursive fashion: if $(m,n)$ is a corner box of $\lambda$, then
\[
\CsfB^{\lambda} =
\Ctau_{m,n} \circ \CsfB^{\lambda\setminus\{(m,n)\}} \, ,
\]
where by convention $ \sfB^{\emptyset} = \id$.

\vskip 2mm
\noindent
{\bfseries The Sch\"utzenberger involution}
For the sake of conciseness, we do not discuss the classical combinatorial construction of this correspondence, but we rather provide its piecewise linear description straightaway.
Recalling the definition of the diagonal maps $\Csigma_{k,l}$'s from~\eqref{eq:sigma_comb}, let us define the map $\CsfS = \CsfS^{m\times n} \colon \R^{m\times n} \to \R^{m\times n}$, acting on $m\times n$ matrices, by
\begin{equation}
\label{eq:Schutz}
\CsfS^{m\times n} := \Csigma_{m,1} \circ (\Csigma_{m,2} \circ \Csigma_{m,1}) \circ (\Csigma_{m,3} \circ \Csigma_{m,2} \circ \Csigma_{m,1}) \circ \dots \circ (\Csigma_{m,n-1} \circ \dots \circ \Csigma_{m,1}) \, .
\end{equation}
It is easy to see that this is an involution, using the commutative properties of the diagonal maps $\Csigma_{k,l}$ and the fact that each of them is an involution.

Denoting by $\lambda'$ the conjugate partition of a partition $\lambda$, define the \textbf{transposition map} $\sfT: \R^{\lambda} \to \R^{\lambda'}$, $\bm{w} \mapsto \bm{w}^{\sfT}$ by setting $w^{\sfT}_{i,j} := w_{j,i}$ for all $(i,j)\in\lambda'$.
By definition of the $\Csigma_{k,l}$'s, $\CsfS$ only acts on the triangular/trapezoidal lower part\footnote{Again we refer to~\eqref{eq:uvt}-\eqref{eq:RSKoutputMatrix} for the notation of lower and upper part.} $\bm{u}$ of the input matrix $\bm{t} = (\bm{u} \backslash \bm{v})$, preserving the $(n-m)$-th diagonal.
Calling $\bm{u}^{\CsfS}$ the lower part of $\CsfS(\bm{u} \backslash \bm{v})$, we thus have that $\CsfS(\bm{u} \backslash \bm{v}) = (\bm{u}^{\CsfS} \backslash \bm{v})$.
Likewise, $\sfT\CsfS\sfT$ only acts on the upper part $\bm{v}$, replacing it with another triangular/trapezoidal array that we call $\bm{v}^{\CsfS}$: we thus have that $\sfT\CsfS\sfT(\bm{u} \backslash \bm{v}) = (\bm{u} \backslash \bm{v}^{\CsfS})$.

In the case of $\bm{u}$ being a Gelfand-Tsetlin pattern (equivalently, its corresponding Young tableau), the maps $\bm{u} \mapsto \bm{u}^{\CsfS}$ and $\bm{v} \mapsto \bm{v}^{\CsfS}$ coincide with the classical Sch\"utzenberger involution defined via \emph{jeu de taquin} operations -- see e.g.~\cite{fulton97, stanley99}.
By extension, we will therefore refer to the maps $\CsfS$ and $\sfT\CsfS\sfT$ on $m\times n$ real matrices as the \textbf{Sch\"utzenberger involution} on the upper and lower part, respectively.

\vskip 2mm

Let us now define the involutions that reverse, respectively, the rows and the columns of an $m\times n$ matrix:
\begin{align}
\label{eq:rowReversion}
&\sfR:\R^{m\times n} \to \R^{m\times n} \, ,
&&\bm{w} \mapsto \bm{w}^{\sfR} \, ,
&& w^\sfR_{i,j} = w_{m-i+1,j} \, , \\
&\sfC:\R^{m\times n} \to \R^{m\times n} \, ,
&&\bm{w} \mapsto \bm{w}^{\sfC} \, ,
&& w^\sfC_{i,j} = w_{i, n-j+1} \, ,\label{eq:rowCeversion}
\end{align}
for $1\leq i\leq m$ and $1\leq j\leq n$.

The following theorems relate the RSK, Burge, and Sch\"utzenberger correspondences through row and/or column reversion of the input matrix.
As we will see in the next section, they all admit a geometric lifting.

\begin{theorem}[{\cite[Appendix~A.1]{fulton97}}]
\label{thm:RSK-Schutz}
\begin{samepage}
Let $\bm{w}\in\R^{m\times n}$.
The following diagram commutes:

\begin{center}
\begin{tikzpicture}
\node (w) at (0,0) {$\bm{w}$};
\node (uv) at (2.2,0) {$(\bm{u}\backslash \bm{v})$};
\node (wRC) at (0,-1.6) {$\bm{w}^{\sfR\sfC}$};
\node (uvS) at (2.2,-1.6) {$(\bm{u}^{\CsfS} \backslash \bm{v}^{\CsfS})$};
\draw[|->] (w) -- (uv) node[midway,above] {$\scriptstyle \CsfK$};
\draw[|->] (w) -- (wRC) node[midway,right] {$\scriptstyle \sfR\sfC$};
\draw[|->] (uv) -- (uvS) node[midway,right] {$\scriptstyle \sfT \CsfS \sfT \CsfS$};
\draw[|->] (wRC) -- (uvS) node[midway,above] {$\scriptstyle \CsfK$};
\end{tikzpicture}
\end{center}
\end{samepage}
\end{theorem}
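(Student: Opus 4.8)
The plan is to read the statement through the classical biword dictionary and reduce it to Schützenberger's theorem on evacuation. First I would encode $\bm{w}$ as the biword in which the column $\binom{i}{j}$ occurs with multiplicity $w_{i,j}$, so that $\CsfK$ produces the pair $(P,Q)$ recorded by the lower and upper patterns $\bm{u}$ and $\bm{v}$: here $P=\bm{u}$ reads the bottom (value) letters in $\{1,\dots,n\}$ and $Q=\bm{v}$ reads the top (recording) letters in $\{1,\dots,m\}$. Under this dictionary the row reversal $\sfR$ of \eqref{eq:rowReversion} complements the recording alphabet, $i\mapsto m+1-i$, while the column reversal $\sfC$ of \eqref{eq:rowCeversion} complements the value alphabet, $j\mapsto n+1-j$. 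Thus $\sfR\sfC$ is exactly the reverse-complement of the biword, i.e.\ conjugation by the longest permutation; on a permutation matrix it is the $180^{\circ}$ rotation $\pi\mapsto w_0\pi w_0$. Since $\bm{u}^{\CsfS}$ and $\bm{v}^{\CsfS}$ are, by the discussion following \eqref{eq:Schutz}, precisely the evacuations $\mathrm{evac}(P)$ and $\mathrm{evac}(Q)$, the content of the theorem is the single identity ``$\CsfK$ of the reverse-complement $=$ evacuate both tableaux''.

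Next I would establish this identity. The most transparent route is to reduce to standard tableaux by standardization: replace the biword by the permutation obtained by breaking ties in the canonical order, after checking that standardization intertwines the two alphabet complementations with the corresponding operations on permutations, and that evacuation commutes with destandardization. This leaves the permutation case, namely that $w_0\pi w_0$ has insertion and recording tableaux $\mathrm{evac}(P)$ and $\mathrm{evac}(Q)$. I would prove this case either by (a) Schützenberger's original \emph{jeu de taquin} argument, where slides read backwards along the growth of $\pi$ realize evacuation, or, more conceptually, by (b) Fomin's growth diagrams: place the weights $w_{i,j}$ in the cells of the $m\times n$ grid, read $P$ and $Q$ off the two boundary edges via the forward local growth rule, and observe that rotating the grid by $180^{\circ}$ simultaneously turns $\bm{w}$ into $\bm{w}^{\sfR\sfC}$ and turns the forward rule into its backward counterpart.

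The main obstacle is precisely the identification, in approach (b), of the boundary partitions of the rotated diagram with the evacuated tableaux: one must show that running the growth rule from the opposite corner reproduces the \emph{jeu de taquin} evacuation of the original edge sequences. This is the genuinely nontrivial compatibility between the local growth rule and the global evacuation involution, and it is where the classical proofs invest their effort. I would handle it by induction on the number of boxes, peeling off the corner box $(m,n)$ and invoking the recursive description $\CsfK^{\lambda}=\Crho_{m,n}\circ\CsfK^{\lambda\setminus\{(m,n)\}}$ together with the defining composition \eqref{eq:Schutz} of $\CsfS$ in terms of the $\Csigma_{k,l}$, matching the removal of the outermost corner of the growth against the outermost layer of the evacuation.

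Finally, I would translate the permutation statement back to the glued matrix $\bm{t}=(\bm{u}\backslash\bm{v})$. The lower-triangular action $\CsfS(\bm{u}\backslash\bm{v})=(\bm{u}^{\CsfS}\backslash\bm{v})$ accounts for $\mathrm{evac}(P)$ and the upper-triangular action $\sfT\CsfS\sfT(\bm{u}\backslash\bm{v})=(\bm{u}\backslash\bm{v}^{\CsfS})$ for $\mathrm{evac}(Q)$; since these modify disjoint parts of $\bm{t}$ they commute, so their composition $\sfT\CsfS\sfT\CsfS$ sends $\CsfK(\bm{w})=(\bm{u}\backslash\bm{v})$ to $(\bm{u}^{\CsfS}\backslash\bm{v}^{\CsfS})$. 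By the reduction above this equals $\CsfK(\bm{w}^{\sfR\sfC})$, which is exactly the commutativity of the square.
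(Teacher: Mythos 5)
The paper offers no proof of Theorem~\ref{thm:RSK-Schutz}; it is imported verbatim from \cite[Appendix~A.1]{fulton97}, so there is no internal argument to compare against. Your outline is, in substance, the classical proof from that reference: translate $\sfR\sfC$ into the reverse--complement of the biword (conjugation by $w_0$ on permutation matrices), identify $\bm{u}\mapsto\bm{u}^{\CsfS}$ and $\bm{v}\mapsto\bm{v}^{\CsfS}$ with evacuation (which the paper asserts in the discussion after~\eqref{eq:Schutz}), reduce to permutations by standardization, and invoke Sch\"utzenberger's theorem that $w_0\pi w_0$ has tableaux $(\mathrm{evac}(P),\mathrm{evac}(Q))$. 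This is a correct route, and the compatibilities you flag (standardization intertwining alphabet complementation with $w_0$-conjugation, evacuation commuting with destandardization, and the growth-rule/evacuation matching) are indeed where all the work sits in the classical treatments.

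Two caveats on how this sits relative to the paper. First, the theorem is stated for $\bm{w}\in\R^{m\times n}$, whereas the combinatorial argument only delivers it on $\Z_{\geq0}^{m\times n}$; to cover arbitrary real entries one still needs the standard extension step --- e.g.\ observing that both sides of the identity are tropicalizations of the subtraction-free geometric identity of Theorem~\ref{thm:gRSK-Schutz}, in the spirit of the tropical limit procedure the paper cites --- and your proposal does not address this (the paper itself signals the issue by noting that these theorems are ``usually stated in the classical combinatorial context of input matrices with non-negative integer entries''). Second, within the paper's own toolbox there is a purely formal derivation that bypasses evacuation entirely: the two diagrams of Theorem~\ref{thm:RSK-Burge-Schutz} read $\CsfS\CsfK=\CsfB\sfC$ and $\sfT\CsfS\sfT\CsfK=\CsfB\sfR$; applying the involution $\sfT\CsfS\sfT$ to the second gives $\CsfK\sfR=\sfT\CsfS\sfT\CsfB$, whence
\[
\sfT\CsfS\sfT\,\CsfS\,\CsfK \;=\; \sfT\CsfS\sfT\,\CsfB\,\sfC \;=\; \CsfK\,\sfR\,\sfC \, ,
\]
which is exactly the commutativity of the square. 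This is the combinatorial mirror of the paper's remark that Theorem~\ref{thm:gRSK-Schutz} is a straightforward consequence of Theorem~\ref{thm:gRSK-Burge-Schutz}, and it transfers to real entries without any standardization or density argument.
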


\begin{theorem}[{\cite[Appendix~A.4.1]{fulton97}}]
\label{thm:RSK-Burge-Schutz}
\begin{samepage}
Let $\bm{w}\in\R^{m\times n}$.
The following diagrams commute:
\begin{center}
\begin{tikzpicture}
\node (w) at (0,0) {$\bm{w}$};
\node (uv) at (2.2,0) {$(\bm{u}\backslash \bm{v})$};
\node (wC) at (0,-1.6) {$\bm{w}^{\sfC}$};
\node (uvS) at (2.2,-1.6) {$(\bm{u}^{\CsfS} \backslash \bm{v})$};
\draw[|->] (w) -- (uv) node[midway,above] {$\scriptstyle \CsfK$};
\draw[|->] (w) -- (wC) node[midway,right] {$\scriptstyle \sfC$};
\draw[|->] (uv) -- (uvS) node[midway,right] {$\scriptstyle \CsfS$};
\draw[|->] (wC) -- (uvS) node[midway,above] {$\scriptstyle \CsfB$};

\begin{scope}[xshift = 5.5cm]
\node (w) at (0,0) {$\bm{w}$};
\node (uv) at (2.2,0) {$(\bm{u}\backslash \bm{v})$};
\node (wR) at (0,-1.6) {$\bm{w}^{\sfR}$};
\node (uvS) at (2.2,-1.6) {$(\bm{u} \backslash \bm{v}^{\CsfS})$};
\draw[|->] (w) -- (uv) node[midway,above] {$\scriptstyle \CsfK$};
\draw[|->] (w) -- (wR) node[midway,right] {$\scriptstyle \sfR$};
\draw[|->] (uv) -- (uvS) node[midway,right] {$\scriptstyle \sfT \CsfS \sfT$};
\draw[|->] (wR) -- (uvS) node[midway,above] {$\scriptstyle \CsfB$};
\end{scope}
\end{tikzpicture}
\end{center}
\end{samepage}
\end{theorem}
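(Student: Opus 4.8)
The plan is to establish the left-hand diagram, i.e.\ the identity $\CsfS\circ\CsfK=\CsfB\circ\sfC$, by a direct analysis of the insertion algorithms, and then to deduce the right-hand diagram from it purely formally by transposition. Throughout I identify the output matrix $(\bm{u}\backslash\bm{v})$ with the pair of semistandard tableaux $(P,Q)$, so that $\bm{u}$ encodes $P$, $\bm{v}$ encodes $Q$, and—using the paper's identification of $\CsfS$ with the \emph{jeu de taquin} evacuation—$\bm{u}^{\CsfS}$ encodes $\mathrm{evac}(P)$.

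First I would translate the column reversion $\sfC$ into an operation on words. Writing $\bm{w}$ as the word $w=w_1\cdots w_m$ with increasing subwords $w_k=1^{w_{k,1}}\cdots n^{w_{k,n}}$, the matrix $\bm{w}^{\sfC}$ has increasing subwords $1^{w_{k,n}}\cdots n^{w_{k,1}}$; reading each of these in reverse, as the Burge algorithm prescribes, yields exactly the complemented subword $c(w_k)=n^{w_{k,1}}(n-1)^{w_{k,2}}\cdots 1^{w_{k,n}}$, where $c\colon j\mapsto n+1-j$ denotes complementation of the alphabet. Hence $\CsfB(\bm{w}^{\sfC})$ is produced by successively column inserting $c(w_1),\dots,c(w_m)$, i.e.\ by column inserting $c(w)$.

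The heart of the argument is the classical intertwining identity $P_{\mathrm{col}}(c(v))=\mathrm{evac}\bigl(P_{\mathrm{row}}(v)\bigr)$, valid for any word $v$ in $\{1,\dots,n\}$, where $P_{\mathrm{row}}$ and $P_{\mathrm{col}}$ denote the row- and column-insertion tableaux. Granting it and applying it to every prefix $v=w_1\cdots w_k$, two conclusions follow at once: since evacuation preserves the shape, the sequence of intermediate shapes generated by the Burge insertion of $\bm{w}^{\sfC}$ agrees with that generated by the RSK insertion of $\bm{w}$, so the recording tableaux coincide and the upper part of $\CsfB(\bm{w}^{\sfC})$ is $\bm{v}$; and the final tableau satisfies $P_{\mathrm{col}}(c(w))=\mathrm{evac}(P)$, so the lower part is $\bm{u}^{\CsfS}$. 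Together these give $\CsfB(\bm{w}^{\sfC})=(\bm{u}^{\CsfS}\backslash\bm{v})$, which is the left diagram. I expect the proof of the intertwining identity to be the main obstacle: I would prove it by induction on the length of $v$, using the jeu-de-taquin characterization of evacuation and the compatibility of a single insertion step with evacuation, or—more in the spirit of the local-map formalism used later—via Fomin's growth-diagram local rules, under which column reversion of the input is the reflection exchanging the forward (RSK) and backward (Burge) growth rules while inducing evacuation on the boundary.

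Finally I would derive the right diagram from the left one using three facts recalled above: the RSK transpose symmetry $\CsfK\circ\sfT=\sfT\circ\CsfK$, the Burge transpose symmetry $\CsfB\circ\sfT=\sfT\circ\CsfB$ (both with $\sfT$ swapping lower and upper parts of the output), and the relation $\sfR=\sfT\,\sfC\,\sfT$ between the two reversions, which one checks entrywise. Applying the left diagram to $\bm{w}^{\sfT}$ and transposing then gives
\[
\CsfB(\bm{w}^{\sfR})=\sfT\,\CsfB(\bm{w}^{\sfT\sfC})=\sfT\,\CsfS\,\CsfK(\bm{w}^{\sfT})=\sfT\,\CsfS\,\sfT\,\CsfK(\bm{w})=(\bm{u}\backslash\bm{v}^{\CsfS}),
\]
which is exactly the right diagram, completing the proof.
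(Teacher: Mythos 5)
Your argument is essentially sound, but it is worth being clear about how it sits relative to the paper: the paper does not prove Theorem~\ref{thm:RSK-Burge-Schutz} at all --- it is quoted from \cite[Appendix~A.4.1]{fulton97} as a classical fact --- so what you have written is a reconstruction of the standard combinatorial proof rather than an alternative to an argument in the text. Your route (translate $\sfC$ followed by the Burge reverse-reading convention into letterwise complementation $c$ of each increasing subword, invoke the classical intertwining $P_{\mathrm{col}}(c(v))=\mathrm{evac}(P_{\mathrm{row}}(v))$ prefix by prefix to identify both the insertion and the recording tableaux, then deduce the right diagram from the left via $\sfR=\sfT\sfC\sfT$ and the transpose symmetries of $\CsfK$ and $\CsfB$) is correct, and the final transposition step mirrors exactly how the paper derives $\sfT\sfS\sfT\sfK=\sfB\sfR$ from $\sfB\sfC=\sfS\sfK$ in the geometric setting. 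The contrast worth noting is with the paper's proof of the geometric analog, Theorem~\ref{thm:gRSK-Burge-Schutz}: there no insertion algorithm is available, so the authors run a double induction on the matrix dimensions that bottoms out in the local commutation relation of Proposition~\ref{prop:compDiagMaps}. Your tableau-theoretic argument is more conceptual and shorter in the combinatorial setting, but it does not lift to the geometric one, which is precisely why the paper develops the local-map machinery. Two caveats on your write-up: (i) the intertwining identity you defer is genuinely the entire content of the theorem (it is the composite of $P_{\mathrm{col}}(v)=P_{\mathrm{row}}(v^{\mathrm{rev}})$ with Sch\"utzenberger's $P_{\mathrm{row}}$ of the reversed-complemented word being $\mathrm{evac}(P_{\mathrm{row}})$), so a complete proof must supply it for general words, not just permutations; (ii) the theorem is stated for $\bm{w}\in\R^{m\times n}$, and passing from non-negative integer matrices to real arrays requires an extra step (homogeneity of the piecewise linear maps plus density and continuity), which neither you nor the paper addresses explicitly.
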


The above theorems are usually stated in the classical combinatorial context of input matrices with non-negative integer entries~\cite{fulton97}.
The even more special case of permutation matrices corresponds to the following fact: column inserting the elements $\sigma(n),\dots,\sigma(1)$ of a permutation $\sigma$ in reverse order gives the same $P$-tableau as row inserting $\sigma(1),\dots,\sigma(n)$ in the standard order, and the Sch\"utzenberger dual of the $Q$-tableau.

\section{Geometric Burge, RSK, and Sch\"utzenberger correspondences}
\label{sec:geometricCorrespondences}

In this section we perform the geometric lifting of the piecewise linear bijections introduced in Section~\ref{sec:combinatorialCorrespondences}: namely, we formally replace the ``tropical operations'' $(\vee, \wedge, +, -)$ with the ``usual four operations'' $(+,-,\times, \div )$.
This will lead to the definition of the corresponding birational maps on polygonal arrays with positive real entries, in terms of local maps on the entries.
The geometric RSK correspondence has been first studied in~\cite{kirillov01, noumiYamada04}, but our description in terms of local maps is due to~\cite{oConnellSeppalainenZygouras14} (in the case of rectangular input arrays); the geometric Sch\"utzenberger involution has been discussed in~\cite{kirillov01, noumiYamada04}; finally, to the best of our knowledge, the geometric Burge correspondence has not been considered before.
Notice that some confusion might arise from the fact that, in some of the above references, the \emph{geometric lifting} is called \emph{tropical image} instead.
We will also prove the geometric analog of Theorems~\ref{thm:RSK-Schutz} and~\ref{thm:RSK-Burge-Schutz}, a result that links together all three geometric correspondences.

For a given Young diagram $\lambda$, let $\R_{>0}^{\lambda}$ be the set of $\lambda$-shaped arrays of positive real numbers.
For $(i,j)\in\lambda$, we define $\sfa_{i,j}, \sfb_{i,j}, \sfc_{i,j} \colon \R_{>0}^{\lambda} \to \R_{>0}^{\lambda}$ as the \textbf{local maps} that act on $\bm{w}\in \R_{>0}^{\lambda}$ by only modifying $w_{i,j}$ according to the following rules:
\begin{align}
\label{eq:a}
&\sfa_{i,j}\colon w_{i,j} \longmapsto \frac{1}{w_{i,j}}(w_{i-1,j} + w_{i,j-1}) \left(\frac{1}{w_{i+1,j}} + \frac{1}{w_{i,j+1}} \right)^{-1} \, , \\
\label{eq:b}
&\sfb_{i,j}\colon w_{i,j} \longmapsto \frac{1}{w_{i,j}}(w_{i-1,j} + w_{i,j-1}) w_{i,j+1} \, , \\
\label{eq:c}
&\sfc_{i,j}\colon w_{i,j} \longmapsto w_{i,j} (w_{i-1,j} + w_{i,j-1}) \, .
\end{align}
For two distinct indices $(i,j),(k,l)\in\lambda$, we also define $\sfd^{k,l}_{i,j} \colon \R_{>0}^{\lambda} \to \R_{>0}^{\lambda}$ as the local map that acts on an array $\bm{w}\in \R_{>0}^{\lambda}$ by only modifying $w_{i,j}$ and $w_{k,l}$ as follows:
\begin{equation}
\label{eq:d}
\sfd^{k,l}_{i,j}\colon
\begin{cases}
w_{i,j} \longmapsto \left( \dfrac{1}{w_{i,j}} + \dfrac{1}{w_{k,l} (w_{i-1,j} + w_{i,j-1})} \right)^{-1} \, , \\
w_{k,l} \longmapsto \left( \dfrac{w_{k,l}(w_{i-1,j} + w_{i,j-1})}{w_{i,j}^2} + \dfrac{1}{w_{i,j}} \right) \left(\dfrac{1}{w_{i+1,j}} + \dfrac{1}{w_{i,j+1}} \right)^{-1} \, .
\end{cases}
\end{equation}
For $i=1$ and/or $j=1$, the values of $w_{i-1,j}$ and $w_{i,j-1}$ are determined by the following convention: $w_{0,1}=w_{1,0}=1/2$ and $w_{0,k}=w_{k,0}=0$ for all $k>1$.
For $\sfa_{i,j}$, $\sfb_{i,j}$, and $\sfd_{i,j}^{k,l}$ to be well-defined, $(i+1,j)$ and/or $(i,j+1)$ must be boxes of $\lambda$.
It will also be useful to define the map $\sfe_{i,j}^{k,l}$, which acts on a $\lambda$-shaped array $\bm{w}$, with $(i,j),(k,l)\in\lambda$, by exchanging $w_{i,j}$ with $w_{k,l}$:
\begin{equation}
\label{eq:e}
\sfe^{k,l}_{i,j}\colon
\begin{cases}
w_{i,j} \longmapsto w_{k,l} \, , \\
w_{k,l} \longmapsto w_{i,j} \, .
\end{cases}
\end{equation}
All these local maps are bijective, but only $\sfa_{i,j}$, $\sfb_{i,j}$, and $\sfe_{i,j}^{k,l}$ are involutions.
As in the ``tropical'' case, they all satisfy obvious commutative properties, due to their local action on the entries of the input array.
For example, local maps of type~\eqref{eq:a}-\eqref{eq:c} commute whenever the subscripts are not nearest neighbors in $\N^2$.

Let us now define, for all $(k,l)\in\lambda$, the following \textbf{diagonal maps}:
\begin{align}
\label{eq:rho}
\rho_{k,l} &:= \sfa_{k-h+1,l-h+1} \circ \sfa_{k-h+2,l-h+2} \circ \dots \circ \sfa_{k-1,l-1} \circ \sfc_{k,l} \, , \\
\label{eq:sigma}
\sigma_{k,l} &:= \sfa_{k-h+1,l-h+1} \circ \sfa_{k-h+2,l-h+2} \circ \dots \circ \sfa_{k-1,l-1} \circ \sfb_{k,l} \, , \\
\label{eq:tau}
\tau_{k,l} &:= \sfc_{k,l} \circ \sfd^{k,l}_{k-1,l-1} \circ \dots \circ \sfd^{k,l}_{k-h+2,l-h+2} \circ \sfd^{k,l}_{k-h+1,l-h+1} \, ,
\end{align}
where $h:=k \wedge l$.
All these maps are bijective, and $\sigma_{k,l}$ is also an involution.
Any two diagonal maps commute if they act on diagonals that are not the same nor neighboring.

We can now define the \textbf{geometric Robinson-Schensted-Knuth (RSK) correspondence} $\sfK = \sfK^{\lambda}$ and the \textbf{geometric Burge correspondence} $\sfB = \sfB^{\lambda}$ as the bijections $\R_{>0}^{\lambda} \to \R_{>0}^{\lambda}$ given by
\begin{align}
\label{eq:gRSK}
\sfK &:= \rho_{i_n,j_n} \circ \rho_{i_{n-1},j_{n-1}} \circ \dots \circ \rho_{i_1,j_1} \, , \\
\label{eq:gBurge}
\sfB &:= \tau_{i_n,j_n} \circ \tau_{i_{n-1},j_{n-1}} \circ \dots \circ \tau_{i_1,j_1} \, .
\end{align}
As in~\eqref{eq:RSK} and~\eqref{eq:Burge}, here $((i_1,j_1),\dots,(i_n,j_n))$ is any sequence of distinct boxes such that, for all $1\leq k\leq n$, $\lambda^{(k)} := \{(i_1,j_1),\dots, (i_k,j_k)\}$ is a Young diagram, and $\lambda^{(n)} = \lambda$.
We will be mostly using the following equivalent recursive definition of $\sfK$ and $\sfB$:
\begin{align}
\label{eq:gRSK-gBurge_recursive}
\sfK^{\lambda} =
\rho_{m,n} \circ \sfK^{\lambda\setminus\{(m,n)\}} \quad\qquad\text{and}\qquad\quad
\sfB^{\lambda} =
\tau_{m,n} \circ \sfB^{\lambda\setminus\{(m,n)\}} \, ,
\end{align}
for any corner box $(m,n)$ of $\lambda$, where by convention $\sfK^{\emptyset} = \sfB^{\emptyset} = \id$.

Recall that we denote by $\sfT$ the map that acts on a Young-diagram-shaped array by transposing it in the usual way.
As in the ``tropical'' case, it is easy to see that the geometric RSK and Burge correspondences satisfy a symmetry property: $\sfK(\bm{w}^{\sfT}) = \sfK(\bm{w})^{\sfT}$ and $\sfB(\bm{w}^{\sfT}) = \sfB(\bm{w})^{\sfT}$ for all $\bm{w}\in\R_{>0}^{\lambda}$ -- see Proposition~\ref{prop:Burge_symmetric} for a formal statement and proof in the Burge case.

We next define the \textbf{geometric Sch\"utzenberger involution} $\sfS = \sfS^{m\times n} \colon \R_{>0}^{m\times n} \to \R_{>0}^{m\times n}$ by
\begin{equation}
\label{eq:gSchutz}
\sfS^{m\times n} := \sigma_{m,1} \circ (\sigma_{m,2} \circ \sigma_{m,1}) \circ (\sigma_{m,3} \circ \sigma_{m,2} \circ \sigma_{m,1}) \circ \dots \circ (\sigma_{m,n-1} \circ \dots \circ \sigma_{m,1}) \, .
\end{equation}
Similarly to Section~\ref{sec:combinatorialCorrespondences}, we write $\bm{t}=(\bm{u}\backslash \bm{v})$ for a matrix $\bm{t}$ with ``lower part'' $\bm{u}$ and ``upper part'' $\bm{v}$ (i.e.\ the parts that $\bm{t}$ is divided into by the diagonal that contains the bottom-right corner of the matrix) -- see~\eqref{eq:uv}-\eqref{eq:uvt}-\eqref{eq:RSKoutputMatrix}.
We have that $\sfS$ (respectively, $\sfT\sfS\sfT$) acts on $\bm{t}$ by modifying the lower part $\bm{u}$ (respectively, the upper part $\bm{v}$) only, and preserving the $(n-m)$-th diagonal.
Therefore, if $\bm{u}^{\sfS}$ is the lower part of $\sfS(\bm{u} \backslash \bm{v})$ and $\bm{v}^{\sfS}$ is the upper part of $\sfT\sfS\sfT(\bm{u} \backslash \bm{v})$, then we have that $\sfS(\bm{u} \backslash \bm{v}) = (\bm{u}^{\sfS} \backslash \bm{v})$ and $\sfT\sfS\sfT(\bm{u} \backslash \bm{v}) = (\bm{u} \backslash \bm{v}^{\sfS})$.
The maps $\bm{u} \mapsto \bm{u}^{\sfS}$ and $\bm{v} \mapsto \bm{v}^{\sfS}$ can be regarded as the geometric lifting of (the generalization of) the Sch\"utzenberger involution studied in~\cite{berensteinKirillov96}.

The relation between geometric RSK and geometric Sch\"utzenberger involution goes through both row and column reversion of the input matrix, as stated in the following theorem, which is the geometric analog of Theorem~\ref{thm:RSK-Schutz}.
As in~\eqref{eq:rowReversion}-\eqref{eq:rowCeversion}, $\sfR$ and $\sfC$ denote the maps that reverse, respectively, the rows and the columns of a matrix.
\begin{theorem}[{\cite[Section~4.5]{kirillov01}}, {\cite[Sections~2.4, 3.1]{noumiYamada04}}]
\label{thm:gRSK-Schutz}
Let $\bm{w} \in\R_{>0}^{m\times n}$.
The following diagram commutes:

\begin{center}
\begin{tikzpicture}
\node (w) at (0,0) {$\bm{w}$};
\node (uv) at (2.2,0) {$(\bm{u}\backslash \bm{v})$};
\node (wRC) at (0,-1.6) {$\bm{w}^{\sfR\sfC}$};
\node (uvS) at (2.2,-1.6) {$(\bm{u}^{\sfS} \backslash \bm{v}^{\sfS})$};
\draw[|->] (w) -- (uv) node[midway,above] {$\scriptstyle \sfK$};
\draw[|->] (w) -- (wRC) node[midway,right] {$\scriptstyle \sfR\sfC$};
\draw[|->] (uv) -- (uvS) node[midway,right] {$\scriptstyle \sfT \sfS \sfT \sfS$};
\draw[|->] (wRC) -- (uvS) node[midway,above] {$\scriptstyle \sfK$};
\end{tikzpicture}
\end{center}
\end{theorem}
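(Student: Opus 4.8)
The goal is the birational identity $\sfK\circ\sfR\sfC=\sfT\sfS\sfT\sfS\circ\sfK$, and the plan is to prove it entirely through the local-map description of $\sfK$ and $\sfS$, in the spirit of the rest of the paper. I would first halve the work using the transpose symmetry $\sfK(\bm w^{\sfT})=\sfK(\bm w)^{\sfT}$. Since the $180^{\circ}$ rotation $\sfR\sfC$ commutes with $\sfT$, and since $\sfT$ interchanges the lower part $\bm u$ with the upper part $\bm v$ (up to the evident reindexing), it is enough to prove the \emph{lower-part} statement, namely that the lower part of $\sfK(\bm w^{\sfR\sfC})$ equals $\bm u^{\sfS}$; the upper-part statement, that the upper part equals $\bm v^{\sfS}$, then follows by applying the lower-part statement to $\bm w^{\sfT}$ and transposing back. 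Because $\sfS$ (acting on $\bm u$) and $\sfT\sfS\sfT$ (acting on $\bm v$) have disjoint support, reassembling the two halves into $\sfT\sfS\sfT\sfS$ is immediate.

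For the lower-part statement I would induct on the number of columns $n$, the base case $n=1$ being trivial since then $\sfS=\id$ and the $P$-pattern is unaffected by reordering equal letters. The two sides carry compatible recursions in $n$: on one hand $\sfK^{m\times n}$ factors as $\rho_{m,n}\circ\cdots\circ\rho_{1,n}$ composed with $\sfK^{m\times(n-1)}$ (adding the boxes of the last column); on the other hand $\sfS^{m\times n}=\sfS^{m\times(n-1)}\circ(\sigma_{m,n-1}\circ\cdots\circ\sigma_{m,1})$, the trailing $\sigma$-staircase playing the role of a geometric promotion operator. Feeding these recursions into the inductive hypothesis reduces the commutation of the full square to the commutation of a single column block of $\sfK$ with one Schützenberger staircase. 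Since $\rho_{k,l}$ and $\sigma_{k,l}$ are built from the elementary maps $\sfa_{i,j},\sfb_{i,j},\sfc_{i,j}$, and since $\sfR\sfC$ conjugates an elementary map supported at $(i,j)$ to one supported at the rotated box $(m-i+1,n-j+1)$, this step collapses to a short list of relations among $\sfa,\sfb,\sfc$ on neighboring boxes, each an identity of rational functions that can be checked by direct computation.

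The main obstacle is that $\sfR\sfC$ reverses the order in which the local maps are applied, so the elementary maps on the rotated input must be commuted — \emph{braided} — past one another until their order matches the staircase order of the $\sigma$-maps on the output side; tracking this bookkeeping is the delicate part. This is also exactly the point at which one cannot shortcut via tropicalization: the combinatorial Theorem~\ref{thm:RSK-Schutz} does not lift automatically, because a tropical identity need not survive de-tropicalization, so every elementary move used in a combinatorial proof must be re-established as a genuine birational identity on $\R_{>0}^{\lambda}$. I therefore expect the crux to be isolating the few \emph{non-commuting} local relations (the RSK/Schützenberger analogs of the commutation relations needed later for Theorem~\ref{thm:gRSK-Burge-Schutz}) and verifying them directly, after which the induction closes routinely.
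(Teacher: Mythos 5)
There is a genuine gap, and it sits exactly where you defer the work. Your outer reductions are fine: halving the problem via $\sfK(\bm w^{\sfT})=\sfK(\bm w)^{\sfT}$, isolating the lower-part statement, and reassembling via the disjoint supports of $\sfS$ and $\sfT\sfS\sfT$ all go through. But the inductive step is asserted, not proved. First, the two recursions you propose to ``feed into the inductive hypothesis'' do not align under the $180^{\circ}$ rotation: deleting the last column of $\bm w$ deletes the \emph{first} column of $\bm w^{\sfR\sfC}$, and since a growth sequence for $\sfK$ must add boxes so that every prefix is a Young diagram, $\sfK(\bm w^{\sfR\sfC})$ does not factor as a column block composed with $\sfK$ of the rotation of $\bm w$ minus its last column. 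You would need a genuinely different (``prepend-a-column'') recursion for $\sfK$, which you neither state nor prove. Second, the ``short list of relations among $\sfa,\sfb,\sfc$ on neighboring boxes'' is never written down, and your own closing sentence concedes that isolating and verifying these non-commuting relations is the crux. That crux is the entire proof. The paper's experience with the exactly analogous step for Theorem~\ref{thm:gRSK-Burge-Schutz} is instructive: the needed relation is not a single-box identity but a commutation relation between whole diagonal maps (Proposition~\ref{prop:compDiagMaps}), whose proof occupies the appendix, requires its own induction, and bottoms out in a machine-checked identity involving a composition of twenty-one local maps. Expecting the RSK/Sch\"utzenberger analogue to ``close routinely'' is not justified.

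For comparison, the paper does not prove Theorem~\ref{thm:gRSK-Schutz} from scratch at all: it is quoted from Kirillov and Noumi--Yamada, and the paper observes that it also drops out of Theorem~\ref{thm:gRSK-Burge-Schutz} in two lines, by eliminating $\sfB$ between the relations $\sfS\sfK=\sfB\sfC$ and $\sfT\sfS\sfT\sfK=\sfB\sfR$ (giving $\sfK\sfR\sfC=\sfT\sfS\sfT\sfS\sfK$, since $\sfS$, $\sfT\sfS\sfT$, $\sfR$, $\sfC$ are commuting involutions where needed). So the efficient route is to prove the two single-reversal squares first -- which is what the paper's induction and Proposition~\ref{prop:compDiagMaps} accomplish -- and deduce the double-reversal square, rather than attack $\sfR\sfC$ directly. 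If you want a self-contained local-map proof, you should redirect your effort to the statement $\sfB\sfC=\sfS\sfK$ (or prove the corresponding RSK-only identity $\sfK\sfC = \sfS\sfK\sfC\sfK^{-1}\cdots$ in whatever form you prefer), identify the precise diagonal-map commutation relation that makes the column induction close, and verify it; until that relation is stated and checked, there is no proof.
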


We will now prove a stronger and fundamental result that represents the geometric lifting of Theorem~\ref{thm:RSK-Burge-Schutz}.
It connects the geometric RSK, Burge, and Sch\"utzenberger correspondences through either column reversion or row reversion of the input matrix.

\begin{theorem}
\label{thm:gRSK-Burge-Schutz}
\begin{samepage}
Let $\bm{w}\in\R_{>0}^{m\times n}$.
The following diagrams commute:
\begin{center}
\begin{tikzpicture}
\node (w) at (0,0) {$\bm{w}$};
\node (uv) at (2.2,0) {$(\bm{u}\backslash \bm{v})$};
\node (wC) at (0,-1.6) {$\bm{w}^{\sfC}$};
\node (uvS) at (2.2,-1.6) {$(\bm{u}^{\sfS} \backslash \bm{v})$};
\draw[|->] (w) -- (uv) node[midway,above] {$\scriptstyle \sfK$};
\draw[|->] (w) -- (wC) node[midway,right] {$\scriptstyle \sfC$};
\draw[|->] (uv) -- (uvS) node[midway,right] {$\scriptstyle \sfS$};
\draw[|->] (wC) -- (uvS) node[midway,above] {$\scriptstyle \sfB$};

\begin{scope}[xshift = 5.5cm]
\node (w) at (0,0) {$\bm{w}$};
\node (uv) at (2.2,0) {$(\bm{u}\backslash \bm{v})$};
\node (wR) at (0,-1.6) {$\bm{w}^{\sfR}$};
\node (uvS) at (2.2,-1.6) {$(\bm{u} \backslash \bm{v}^{\sfS})$};
\draw[|->] (w) -- (uv) node[midway,above] {$\scriptstyle \sfK$};
\draw[|->] (w) -- (wR) node[midway,right] {$\scriptstyle \sfR$};
\draw[|->] (uv) -- (uvS) node[midway,right] {$\scriptstyle \sfT \sfS \sfT$};
\draw[|->] (wR) -- (uvS) node[midway,above] {$\scriptstyle \sfB$};
\end{scope}
\end{tikzpicture}
\end{center}
\end{samepage}
\end{theorem}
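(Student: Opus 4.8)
The plan is to establish the left-hand diagram, namely $\sfB(\bm{w}^{\sfC}) = \sfS(\sfK(\bm{w}))$, and to obtain the right-hand diagram from it by transposition. The transpose map $\sfT$ interchanges rows and columns, so $(\bm{w}^{\sfT})^{\sfC} = (\bm{w}^{\sfR})^{\sfT}$; it commutes with both $\sfK$ and $\sfB$ by the symmetry property (Proposition~\ref{prop:Burge_symmetric}); and, since $\sfT^2 = \id$, it satisfies $\sfS(\bm{a}^{\sfT}) = (\sfT\sfS\sfT(\bm{a}))^{\sfT}$. Applying the left-hand identity to $\bm{w}^{\sfT} \in \R_{>0}^{n\times m}$ and transposing both sides then yields $\sfB(\bm{w}^{\sfR}) = \sfT\sfS\sfT(\sfK(\bm{w}))$, which is exactly the right-hand diagram. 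Everything thus reduces to the single identity $\sfB \circ \sfC = \sfS \circ \sfK$.

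The combinatorial intuition behind this identity (Theorem~\ref{thm:RSK-Burge-Schutz}) is that reversing the columns of the input cancels the reverse reading order built into the Burge algorithm, so that $\sfB \circ \sfC$ processes the letters in the same order as $\sfK$; the only residual difference is that Burge uses column insertion where RSK uses row insertion, and this difference is precisely the Sch\"utzenberger involution. In the geometric setting there is no insertion algorithm, so I would argue directly on the local and diagonal maps. The natural induction is on the number of rows $m$: using the recursive definition~\eqref{eq:gRSK-gBurge_recursive} one peels off the bottom row, writing $\sfK^{m\times n} = \rho_{m,n} \circ \cdots \circ \rho_{m,1} \circ \sfK^{(m-1)\times n}$ and $\sfB^{m\times n} = \tau_{m,n} \circ \cdots \circ \tau_{m,1} \circ \sfB^{(m-1)\times n}$. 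Since $\sfC$ reverses each row without permuting rows, the last row of $\bm{w}^{\sfC}$ is the reversal of the last row of $\bm{w}$, so $\sfB(\bm{w}^{\sfC})$ applies the new chain of $\tau_{m,l}$'s on top of $\sfB((\bm{w}')^{\sfC})$, where $\bm{w}'$ denotes the first $m-1$ rows. The inductive hypothesis rewrites $\sfB((\bm{w}')^{\sfC})$ as $\sfS^{(m-1)\times n}(\sfK^{(m-1)\times n}(\bm{w}'))$, and it remains to show that appending the $\tau_{m,l}$'s reproduces the combined effect of inserting the $\rho_{m,l}$'s and passing from $\sfS^{(m-1)\times n}$ to $\sfS^{m\times n}$.

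The heart of the matter, and the main obstacle, is precisely this last matching. Passing from $\sfS^{(m-1)\times n}$ to $\sfS^{m\times n}$ lengthens the $\sfa$-chain inside every $\sigma$-map, so the comparison cannot be read off formally; instead I would commute the freshly introduced $\tau$-maps past the accumulated $\sigma$- and $\rho$-structure diagonal by diagonal, reducing each move to an exact equality of birational maps supported on a two- or three-diagonal neighborhood and built from the elementary maps $\sfa, \sfb, \sfc, \sfd$. These elementary identities --- which I would isolate as Proposition~\ref{prop:compDiagMaps} --- are rational-function identities with no counterpart in the combinatorial theory, and verifying them (together with the bookkeeping of which diagonals each map touches, so that the global compositions reassemble correctly) is where essentially all of the work lies. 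Once Proposition~\ref{prop:compDiagMaps} is available, the inductive step is a formal rearrangement of commuting factors, the base case $m=0$ is vacuous, and Theorem~\ref{thm:gRSK-Schutz} provides an independent consistency check on the resulting identities.
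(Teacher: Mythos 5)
Your proposal follows essentially the same route as the paper: reduce the $\sfR$-diagram to the $\sfC$-diagram by transposition, prove $\sfB\sfC = \sfS\sfK$ by induction on the number of rows, and resolve the key matching of the new $\tau_{m,l}$'s against the $\rho_{m,l}$'s and the enlarged Sch\"utzenberger map via the local commutation relation of Proposition~\ref{prop:compDiagMaps} (which the paper organizes as a second induction on the number of columns). The only small discrepancy is your claim that the base case $m=0$ is vacuous: since Proposition~\ref{prop:compDiagMaps} requires $p\geq 2$, the row induction must start at $m=1$, where the paper verifies the identity by an explicit computation on row vectors.
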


Notice that Theorem~\ref{thm:gRSK-Schutz} can be indeed derived as a straightforward consequence of Theorem~\ref{thm:gRSK-Burge-Schutz}.
To prove the latter result we need the following proposition, whose proof is quite involved and is postponed to the appendix.
\begin{proposition}
\label{prop:compDiagMaps}
If $(p,q),(p,q+1),(p-1,q) \in \lambda$, then the following relation between maps acting on $\lambda$-shaped arrays holds:
\begin{equation}
\label{eq:compDiagMaps}
\sigma_{p,q} \rho_{p,q+1} \tau_{p,q}
= \tau_{p,q+1} \rho_{p,q} \sigma_{p-1,q} \sfe_{p,q}^{p,q+1} \, .
\end{equation}
\end{proposition}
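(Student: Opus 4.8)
The plan is to prove~\eqref{eq:compDiagMaps} by first restricting it to the two affected diagonals and then inducting on their length. First I would observe that every map occurring on either side acts nontrivially only on the two adjacent diagonals $\{j-i=q-p\}$ and $\{j-i=q+1-p\}$: indeed $\sigma_{p,q}$, $\tau_{p,q}$, $\rho_{p,q}$ modify only the former, $\rho_{p,q+1}$, $\tau_{p,q+1}$, $\sigma_{p-1,q}$ only the latter, and $\sfe_{p,q}^{p,q+1}$ merely swaps one entry of each. The local maps~\eqref{eq:a}--\eqref{eq:d} do read entries from the two bordering diagonals $\{j-i=q-p-1\}$ and $\{j-i=q-p+2\}$ (and from the boundary conventions $w_{0,1}=w_{1,0}=1/2$, $w_{0,k}=w_{k,0}=0$), but they never overwrite them. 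Hence all entries outside the two central diagonals remain fixed under both sides, and the statement reduces to an identity of birational self-maps of the finitely many entries lying on $\{j-i=q-p\}\cup\{j-i=q+1-p\}$, with the bordering entries entering only as frozen parameters.

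I would then induct on $h:=p\wedge q$, the number of boxes of the shorter of the two diagonal segments, taking as inductive hypothesis the statement~\eqref{eq:compDiagMaps} itself with $(p,q)$ replaced by $(p-1,q-1)$. This is the natural reduction, because the maps indexed by $(p-1,q-1)$, $(p-1,q)$, $(p-2,q-1)$ act on the same two diagonals, truncated by one box from the bottom, while the common top box $(p-h+1,q-h+1)$ is shared by $\rho_{p,q}$, $\sigma_{p,q}$, $\tau_{p,q}$ and their $(p-1,q-1)$-counterparts. The base cases are $q=1$ and $p=2$, where $(p-1,q-1)$ is no longer admissible; these are verified by direct substitution of~\eqref{eq:a}--\eqref{eq:e} and simplification. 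For instance, at $(p,q)=(2,1)$ one must check $\sfb_{2,1}\,\sfa_{1,1}\,\sfc_{2,2}\,\sfc_{2,1}=\sfc_{2,2}\,\sfd^{2,2}_{1,1}\,\sfc_{2,1}\,\sfb_{1,1}\,\sfe_{2,1}^{2,2}$ as rational maps in $w_{1,1},w_{2,1},w_{2,2}$.

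For the inductive step, the strategy is to transport the local maps attached to the bottom boxes $(p,q)$ and $(p,q+1)$ across the remaining factors so that, after reorganization, the surviving maps assemble into the $(p-1,q-1)$-instance governed by the inductive hypothesis. This reorganization is not immediate: none of $\rho_{p,q}$, $\sigma_{p,q}$, $\tau_{p,q}$ restricts verbatim to its $(p-1,q-1)$-counterpart — for example the $\sfa$-chain of $\rho_{p,q}$ carries a factor $\sfa_{p-1,q-1}$ where $\rho_{p-1,q-1}$ carries $\sfc_{p-1,q-1}$ — and the situation is worst for $\tau_{p,q}=\sfc_{p,q}\circ\sfd^{p,q}_{p-1,q-1}\circ\dots\circ\sfd^{p,q}_{p-h+1,q-h+1}$, all of whose factors reference the bottom box $(p,q)$. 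Bridging these gaps requires a handful of explicit local commutation relations among $\sfa,\sfb,\sfc,\sfd,\sfe$ on neighboring boxes — the same species of relation as~\eqref{eq:compDiagMaps} but on a single pair of boxes — supplemented by the obvious commutations between local maps supported on non-adjacent boxes.

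I expect the main obstacle to be precisely this birational bookkeeping around the $\sfd$ maps. Two features make it delicate. First, $\sfd^{k,l}_{i,j}$ alters two entries simultaneously, one on each diagonal, so the two diagonals are genuinely coupled and cannot be handled independently. Second, there is an unavoidable case distinction between $p\le q$ and $p>q$: in the two regimes the diagonal segments begin in the first row versus the first column, so the boundary conventions enter the top-box local maps differently, and the induction bottoms out against $q=1$ or against $p=2$ accordingly. Once the correct local identity at the transition box is isolated and verified by direct computation, the inductive hypothesis closes the argument after reassembling the diagonal maps.
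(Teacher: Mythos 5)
Your overall architecture — restrict to the two affected diagonals, treat the bordering entries as frozen parameters, verify small base cases by direct substitution, and close an induction on diagonal length using local commutation relations among $\sfa,\sfb,\sfc,\sfd,\sfe$ — is the same species of argument as the paper's, which inducts on $n_{p,q}:=p\wedge q+p\wedge(q+1)$ and checks the cases $n_{p,q}=3,4,5,6$ by hand. The first paragraph of your plan (both sides only write to the diagonals $\{j-i=q-p\}$ and $\{j-i=q-p+1\}$) is correct. However, the proof as written has a genuine gap: the entire inductive step is deferred to ``a handful of explicit local commutation relations'' that are never identified, let alone verified, and this is precisely where all of the difficulty of the proposition lives. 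In the paper's execution the analogous step requires (i) introducing \emph{modified} local maps $\tilde{\sfa}_{m+1,j}$, $\tilde{\sfd}^{m+k,l}_{m+1,j}$ to account for the changed boundary behaviour at the truncation point, (ii) a long chain of artificial insertions of involutions and their inverses, and (iii) a residual identity $\mathcal{C}=\id$ between a composition of twenty-one local maps, which the authors could only verify by computer. Nothing in your proposal demonstrates that the corresponding residual identity in your scheme is true, or even writes it down; asserting that it can be ``isolated and verified by direct computation'' is a statement of hope, not a proof.

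There is also a structural reason to doubt that your particular reduction, from $(p,q)$ to $(p-1,q-1)$ by truncating \emph{both} diagonals at the bottom, is the right one. The obstruction you correctly flag for $\tau_{p,q}=\sfc_{p,q}\circ\sfd^{p,q}_{p-1,q-1}\circ\cdots\circ\sfd^{p,q}_{p-h+1,q-h+1}$ is worse than ``a handful of local relations at the transition box'': changing the bottom box from $(p,q)$ to $(p-1,q-1)$ changes the superscript of \emph{every} $\sfd$-factor in the chain, so $\tau_{p-1,q-1}$ is not a sub-word of $\tau_{p,q}$ even after finitely many corrections localized near the bottom; one would have to propagate the discrepancy through the whole chain, which amounts to reproving the identity rather than invoking an inductive hypothesis. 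The paper's scheme avoids exactly this: it keeps the bottom boxes $(p,q)$, $(p,q+1)$ fixed (so all $\sfd$-superscripts are preserved) and instead shortens the \emph{longer} diagonal by one box at the \emph{top}, via a downward shift by $m=p-q$ rows of the instance at $(q,q)$; the price is the tilde-maps at the top of the chains, but the perturbation stays confined to the two or three topmost local maps. If you want to salvage your bottom-truncation, you would need to exhibit an explicit factorization of $\tau_{p,q}$ through $\tau_{p-1,q-1}$ (and similarly for $\tau_{p,q+1}$), which I do not believe exists in the simple form your plan presupposes. As it stands, the proposal is a plausible strategy outline whose load-bearing step is missing.
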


The latter can be seen as a structural `commutation relation' between the diagonal maps involved in the geometric RSK, Burge, and Sch\"utzenberger bijections.
Following the same lines of our proofs (or, alternatively, using a tropical limit procedure as in~\cite[\S~1.1.3]{bisi18}), one can argue that an analogous commutation relation holds for the corresponding piece-wise linear maps discussed in Section~\ref{sec:combinatorialCorrespondences}.
However, we are not aware of any combinatorial version of Proposition~\ref{prop:compDiagMaps} in the literature and, moreover, this appears to be a somewhat non-trivial identity, even in the combinatorial setting.
Notice that the combinatorial analog of Theorem~\ref{thm:gRSK-Burge-Schutz}, i.e.\ Theorem~\ref{thm:RSK-Burge-Schutz}, is classically proven without resorting to the piece-wise linear formulation of the combinatorial correspondences.

\begin{proof}[Proof of Theorem~\ref{thm:gRSK-Burge-Schutz}]
The second relation $\sfT \sfS \sfT \sfK = \sfB \sfR$ just follows from the first relation $\sfB \sfC = \sfS \sfK$ as well as the basic properties of the maps involved.
Namely, assuming that $\sfB \sfC = \sfS \sfK$ holds true, recalling that both $\sfK$ and $\sfB$ commute with the transposition, and using the trivial fact that $\sfT \sfC \sfT = \sfR$, we obtain:
\[
\sfT \sfS \sfT \sfK
= \sfT \sfS \sfK \sfT
= \sfT \sfB \sfC \sfT
= \sfB \sfT \sfC \sfT
= \sfB \sfR \, .
\]

We are then left to prove that $\sfB \sfC = \sfS \sfK$ as maps $\R_{>0}^{m\times n} \to \R_{>0}^{m\times n}$; to do so, we will apply the induction principle several times.
Let us first fix any $n\geq 1$ and proceed by induction on $m$, i.e.\ the number of rows of the matrices.
For $m=1$ and a $1\times n$ matrix $\bm{w} = \begin{pmatrix} w_1 &w_2 &\dots &w_{n-1} &w_n \end{pmatrix}$, by definition we have that
\[
\sfK(\bm{w})
= \sfB(\bm{w})
= \begin{pmatrix} w_1 &w_1 w_2 &\dots &\prod_{k=1}^{n-1} w_k &\prod_{k=1}^n w_k \end{pmatrix} \, .
\]
It is also easy to check that
\[
\sfS(\bm{w})
= \begin{pmatrix} \dfrac{w_n}{w_{n-1}} &\dfrac{w_n}{w_{n-2}} &\dots &\dfrac{w_n}{w_1} &w_n \end{pmatrix} \, .
\]
It follows that
\[
\begin{split}
\sfB \sfC (\bm{w})
&= \sfB \begin{pmatrix} w_n &w_{n-1} &\dots &w_2 &w_1 \end{pmatrix} \\
&= \begin{pmatrix} w_n &w_{n-1} w_n &\dots &\prod_{k=2}^n w_k &\prod_{k=1}^n w_k \end{pmatrix} \, , \\
&= \sfS \begin{pmatrix} w_1 &w_1 w_2 &\dots &\prod_{k=1}^{n-1} w_k &\prod_{k=1}^n w_k \end{pmatrix}
= \sfS \sfK (\bm{w}) \, ,
\end{split}
\]
thus proving that $\sfB \sfC = \sfS \sfK$ for $1\times n$ matrices.

Let us now suppose by induction that for a given $m>1$ the statement is true in the case of $(m-1)\times n$ matrices for all $n\geq 1$, and prove the statement in the case of $m\times n$ matrices for all $n\geq 1$.
Let $\bm{x} = (x_{i,j})_{1\leq i\leq m-1, 1\leq j\leq n} \in \R_{>0}^{(m-1)\times n}$, $\bm{y} = (y_1,\dots,y_n) \in \R_{>0}^{1\times n}$, and
\[
\bm{w} := \begin{pmatrix}
\bm{x} \\
\bm{y}
\end{pmatrix}
= \begin{pmatrix}
x_{1,1} &\dots &x_{1,n} \\
\vdots &\ddots &\vdots \\
x_{m-1,1} &\dots &x_{m-1,n} \\
y_1 &\dots &y_n
\end{pmatrix} \, .
\]
By definition, the geometric Burge correspondence on $\bm{w}$ can be obtained by applying maps $\tau_{k,l}$'s first for all $1\leq k\leq m-1$ and $1\leq l\leq n$, and subsequently for $k=m$ and $1\leq l\leq n$.
Therefore,
\[
\sfB \sfC(\bm{w}) 
= \tau_{m,n} \cdots \tau_{m,1} \begin{pmatrix}
\sfB \sfC(\bm{x}) \\
\sfC(\bm{y})
\end{pmatrix}
= \tau_{m,n} \cdots \tau_{m,1} \begin{pmatrix}
\sfS \sfK(\bm{x}) \\
\sfC(\bm{y})
\end{pmatrix} \, ,
\]
where in the latter equality we have applied the induction hypothesis on $\bm{x}$.
The same reasoning holds for the geometric RSK as a composition of maps $\rho_{k,l}$'s:
\[
\sfS \sfK (\bm{w})
= \sfS \, \rho_{m,n} \cdots \rho_{m,1} \begin{pmatrix}
\sfK (\bm{x}) \\
\bm{y}
\end{pmatrix} \, .
\]
Since $\sfK$ is invertible, to conclude that $\sfB \sfC = \sfS \sfK$ on $m\times n$ matrices it suffices to show that
\begin{equation}
\label{eq:gRSK-Burge-Schutz_induction}
\tau_{m,n} \cdots \tau_{m,1} \begin{pmatrix}
\sfS (\bm{x}) \\
\sfC(\bm{y})
\end{pmatrix}
= \sfS \, \rho_{m,n} \cdots \rho_{m,1} \begin{pmatrix}
\bm{x} \\
\bm{y}
\end{pmatrix} \, .
\end{equation}

We are then left to prove~\eqref{eq:gRSK-Burge-Schutz_induction} for all $m> 1$ and $n\geq 1$.
We will now fix $m$ and proceed by induction on $n$.
The statement for $n=1$ follows from the fact that $\tau_{m,1} = \sfc_{m,1} = \rho_{m,1}$ and $\sfS^{k\times 1} = \sfC^{k\times 1} = \id^{k\times 1}$ for any $k\geq 1$.
We will now show that, for any given $N>1$, if~\eqref{eq:gRSK-Burge-Schutz_induction} holds for $n=N-1$, then it also holds for $n=N$.
For $n=N$, the left-hand side of~\eqref{eq:gRSK-Burge-Schutz_induction} reads as
\[
\begin{split}
&\tau_{m,N} \cdots \tau_{m,1} \begin{pmatrix}
\sfS(\bm{x}) \\
\sfC(\bm{y})
\end{pmatrix}
= \tau_{m,N}
(\tau_{m,N-1} \cdots \tau_{m,1})
\begin{pmatrix}
\sfS^{(m-1)\times (N-1)} (\sigma_{m-1,N-1} \cdots \sigma_{m-1,1}) (\bm{x}) \\
\begin{matrix}
y_N &y_{N-1} &\cdots &y_2 &y_1
\end{matrix}
\end{pmatrix} \\
& \quad = \tau_{m,N} \sfS^{m \times (N-1)}
(\rho_{m,N-1} \cdots \rho_{m,1}) \begin{pmatrix}
(\sigma_{m-1,N-1} \cdots \sigma_{m-1,1}) (\bm{x}) \\
\begin{matrix}
y_2 &\cdots &y_{N-1} &y_N &y_1
\end{matrix}
\end{pmatrix} \\
& \quad = \tau_{m,N} \sfS^{m \times (N-1)}
(\rho_{m,N-1} \cdots \rho_{m,1})
(\sfe_{m,N-1}^{m,N} \cdots \sfe_{m,1}^{m,2})
\begin{pmatrix}
(\sigma_{m-1,N-1} \cdots \sigma_{m-1,1}) (\bm{x}) \\
\begin{matrix}
y_1 &y_2 &\cdots &y_{N-1} &y_N
\end{matrix}
\end{pmatrix} \\
& \quad = \tau_{m,N} \sfS^{m \times (N-1)}
(\rho_{m,N-1} \cdots \rho_{m,1})
(\sigma_{m-1,N-1} \cdots \sigma_{m-1,1})
(\sfe_{m,N-1}^{m,N} \cdots \sfe_{m,1}^{m,2})
\begin{pmatrix}
\bm{x} \\
\bm{y}
\end{pmatrix} \\
& \quad = \sfS^{m \times (N-1)} \tau_{m,N}
(\rho_{m,N-1} \sigma_{m-1,N-1} \sfe_{m,N-1}^{m,N})
\cdots (\rho_{m,1} \sigma_{m-1,1} \sfe_{m,1}^{m,2})
\begin{pmatrix}
\bm{x} \\
\bm{y}
\end{pmatrix} \, .
\end{split}
\]
For the above equalities we have used, in order: the recursive definition of $\sfS$; the induction hypothesis, i.e.~\eqref{eq:gRSK-Burge-Schutz_induction} for $n=N-1$; the definition of the exchange operator $\sfe_{i,j}^{k,l}$; finally, the commutative properties of local and diagonal maps.
On the other hand, for $n=N$, the right-hand side of~\eqref{eq:gRSK-Burge-Schutz_induction} reads as
\[
\begin{split}
\sfS \, \rho_{m,N} \cdots \rho_{m,1} \begin{pmatrix}
\bm{x} \\
\bm{y}
\end{pmatrix}
&= \sfS^{m\times (N-1)} (\sigma_{m,N-1} \cdots \sigma_{m,1}) (\rho_{m,N} \cdots \rho_{m,2} \rho_{m,1})
\begin{pmatrix}
\bm{x} \\
\bm{y}
\end{pmatrix} \\
& = \sfS^{m\times (N-1)} (\sigma_{m,N-1} \rho_{m,N}) \cdots (\sigma_{m,1} \rho_{m,2}) \rho_{m,1}
\begin{pmatrix}
\bm{x} \\
\bm{y}
\end{pmatrix} \, ,
\end{split}
\]
again by definition of $\sfS$ and the commutative properties.
To conclude that~\eqref{eq:gRSK-Burge-Schutz_induction} holds for $n=N$, it thus remains to show that
\begin{equation}
\label{eq:gRSK-Burge-Schutz_2}
\begin{split}
&\tau_{m,N}
(\rho_{m,N-1} \sigma_{m-1,N-1} \sfe_{m,N-1}^{m,N})
\cdots (\rho_{m,1} \sigma_{m-1,1} \sfe_{m,1}^{m,2}) \\
&\qquad = (\sigma_{m,N-1} \rho_{m,N}) \cdots (\sigma_{m,1} \rho_{m,2}) \rho_{m,1} \, .
\end{split}
\end{equation}
In turn, the latter readily follows from $N-1$ iterative applications of Proposition~\ref{prop:compDiagMaps} together with the already noticed fact that $\tau_{m,1} = \rho_{m,1}$.
\end{proof}

\section{Properties of the geometric Burge correspondence}
\label{sec:Burge_properties}

In this section we prove the volume preserving property and other useful properties of the geometric Burge correspondence.
Such properties will follow either directly from the definition via local maps given in Section~\ref{sec:geometricCorrespondences} or from Theorem~\ref{thm:gRSK-Burge-Schutz}, as a consequence of the analogous properties of the geometric RSK correspondence~\cite{oConnellSeppalainenZygouras14}.

For the geometric RSK correspondence, it is known that the product of the last $k$ entries of a diagonal in the output array can be expressed in terms of the input array as a ``partition function'' on $k$ non-intersecting directed lattice paths.
\begin{proposition}[\cite{noumiYamada04, oConnellSeppalainenZygouras14}]
\label{prop:non-intersPaths_gRSK}
Let $\bm{w}\in \R_{>0}^{m\times n}$ and $\bm{t}:= \sfK(\bm{w})$.
For all $1\leq k\leq m \wedge n$ we have that
\begin{equation}
\label{eq:non-intersPaths_gRSK}
t_{m,n} t_{m-1,n-1} \cdots t_{m-k+1, n-k+1}
= \sum_{(\pi_1,\dots,\pi_k) \in \Pi^{(k)}_{m,n}} \prod_{(i,j)\in \pi_1 \cup \cdots \cup \pi_k} w_{i,j} \, ,
\end{equation}
where $\Pi^{(k)}_{m,n}$ is the set of $k$-tuples of non-intersecting directed lattice paths in $\N^2$ starting at $(1,1)$, $(1,2)$, \dots, $(1,k)$ and ending at $(m,n-k+1)$, $(m,n-k+2)$, \dots, $(m,n)$ respectively.
\end{proposition}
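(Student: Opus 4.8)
The plan is to prove the identity by induction on the number of rows $m$, using the recursive form of the correspondence in which one first applies geometric RSK to the top $(m-1)\times n$ block $\bm{x}=(w_{i,j})_{i\leq m-1}$ and then inserts the bottom row $\bm{y}$ through the diagonal maps $\rho_{m,1},\dots,\rho_{m,n}$, exactly as in the block computation carried out in the proof of Theorem~\ref{thm:gRSK-Burge-Schutz}. Since $k\leq m\wedge n$, the base case $m=1$ forces $k=1$, where the statement reduces to the prefix-product formula $\sfK(w_1,\dots,w_n)=(w_1,w_1w_2,\dots,w_1\cdots w_n)$ from that same proof: the entry $t_{1,n}=w_1\cdots w_n$ is precisely the weight of the unique path from $(1,1)$ to $(1,n)$, and $\Pi^{(1)}_{1,n}$ is a singleton.

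For the inductive step I would first strengthen the hypothesis so that it records a non-intersecting path interpretation not only for the main-diagonal products of the image $\bm{s}:=\sfK(\bm{x})$ of the top block, but for the entire family of products $s_{a,b}s_{a-1,b-1}\cdots s_{a-j+1,b-j+1}$, i.e.\ for the full collection of Gelfand-Tsetlin coordinates of $\bm{s}$. Each such product should be identified with a partition function over $j$ non-intersecting directed paths confined to rows $1,\dots,m-1$, with starting points $(1,1),\dots,(1,j)$ and endpoints dictated by $(a,b)$ and $j$. This is the invariant that survives the induction, since the off-diagonal entries of $\bm{s}$ are themselves only ratios of partition functions and must be carried along collectively rather than one diagonal at a time.

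The heart of the step is to compute the action of the insertion block $\rho_{m,n}\circ\cdots\circ\rho_{m,1}$ on these products and to check that, in log-log coordinates, it implements the extension of each non-intersecting family by one step into row $m$. The case $k=1$ comes out cleanly and in isolation: within each $\rho_{m,l}$ only the leading map $\sfc_{m,l}\colon w_{m,l}\mapsto w_{m,l}(w_{m-1,l}+w_{m,l-1})$ ever touches row $m$, and it reproduces the transfer-matrix recursion $Z_{m,l}=w_{m,l}(Z_{m-1,l}+Z_{m,l-1})$; a short secondary induction on the column index $l$ then yields $t_{m,l}=Z_{m,l}$ for every $l$, and in particular $t_{m,n}=Z_{m,n}$. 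The involutions $\sfa_{\cdot,\cdot}$ inside the $\rho_{m,l}$ never touch row $m$, so they are invisible to this single-path computation, yet they are exactly what produces the higher entries $t_{m-1,n-1},t_{m-2,n-2},\dots$ that enter the case $k\geq 2$.

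The main obstacle is therefore the case $k\geq 2$: because the entries $t_{m-1,n-1},\dots,t_{m-k+1,n-k+1}$ are generated by the $\sfa$ maps, which couple adjacent diagonals, the product $t_{m,n}\cdots t_{m-k+1,n-k+1}$ cannot be updated one diagonal at a time, and the whole family of Gelfand-Tsetlin coordinates must be propagated simultaneously. There is no linear-algebraic shortcut analogous to the classical Lindstrom-Gessel-Viennot determinant; one must verify by hand, at the level of the local maps, that the birational action of the insertion block matches the combinatorial ``absorption'' of the new sites $(m,l)$ by the non-intersecting family, the non-intersection constraint being encoded precisely in the mass that $\sfa$ transfers between neighbouring diagonals. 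As a consistency check -- though \emph{not} a proof, since tropicalisation is far from injective on subtraction-free rational functions -- the degeneration of $(+,\times)$ to $(\vee,+)$ turns the claimed identity into Greene's theorem for classical RSK, under the dictionary $\lambda_k\leftrightarrow\log t_{m-k+1,n-k+1}$; see \cite[\S~1.1.3]{bisi18} for this style of tropicalisation argument.
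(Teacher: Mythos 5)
The paper offers no proof of this proposition: it is imported from Noumi--Yamada and O'Connell--Sepp\"al\"ainen--Zygouras, so your attempt can only be judged on its own merits. As it stands, it establishes the case $k=1$ and no more. The base case and the $k=1$ inductive step are fine: only $\sfc_{m,l}$ touches row $m$ inside each $\rho_{m,l}$, and it reproduces the transfer recursion $Z_{m,l}=w_{m,l}(Z_{m-1,l}+Z_{m,l-1})$. But for $k\geq 2$ you do precisely what a proof cannot do: you correctly identify the hard part --- the entries $t_{m-1,n-1},\dots,t_{m-k+1,n-k+1}$ are produced by the $\sfa$-maps, which couple neighbouring diagonals --- and then declare that ``one must verify by hand, at the level of the local maps,'' that the birational dynamics matches the combinatorial absorption of row $m$ by the non-intersecting family. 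That verification \emph{is} the proposition for $k\geq 2$, and it is nowhere carried out. The strengthened induction hypothesis is also left too vague to assess (``endpoints dictated by $(a,b)$ and $j$''): the off-diagonal entries of $\sfK(\bm{x})$ are ratios of non-intersecting-path partition functions rather than partition functions, so making the inductive statement close under insertion of a row requires pinning down exactly which products of which entries carry a path interpretation, and you do not do this. The concluding tropicalisation remark is, as you yourself note, a consistency check and not an argument.

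Your assertion that ``there is no linear-algebraic shortcut analogous to the classical Lindstr\"om--Gessel--Viennot determinant'' is also mistaken, and it points away from the proof that actually works. The standard argument in the cited references realizes $\sfK$ via the Noumi--Yamada matrix formulation: the output array is encoded in a product of elementary matrices, each corresponding to a planar network with positive edge weights built from the $w_{i,j}$, and the product $t_{m,n}t_{m-1,n-1}\cdots t_{m-k+1,n-k+1}$ is a $k\times k$ minor of that matrix product. Lindstr\"om's lemma for planar networks is subtraction-free --- only the identity permutation contributes, so the minor is a genuinely positive sum --- and identifies this minor with the sum over $k$-tuples of non-intersecting paths, which is exactly \eqref{eq:non-intersPaths_gRSK}. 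This bypasses the diagonal-coupling difficulty entirely; if you want a self-contained proof, that is the route to take rather than a by-hand local-map verification.
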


The geometric Burge correspondence has a similar property, where the non-intersecting paths go in the north-east direction instead of south-east.
This fact is proven in the next proposition.
We state the result for generic Young-diagram-shaped arrays, and specialize it to the ``extreme'' cases, which are of particular interest.
First, denote the product of all the entries on the $k$-th diagonal of $\bm{t}\in\R_{>0}^{\lambda}$ by
\begin{equation}
\label{eq:diagProd}
P_k(\bm{t}) := \prod_{j-i = k} t_{i,j} \, .
\end{equation}

\begin{proposition}
\label{prop:non-intersPaths}
Let $\bm{w}\in \R_{>0}^{\lambda}$ and $\bm{t}:= \sfB(\bm{w})$.
If $(m,n)$ is a border box of $\lambda$, then for all $1\leq k\leq m \wedge n$ we have that
\begin{equation}
\label{eq:non-intersPaths}
t_{m,n} t_{m-1,n-1} \cdots t_{m-k+1, n-k+1}
= \sum_{(\pi_1,\dots,\pi_k) \in \Pi^{*(k)}_{m,n}} \prod_{(i,j)\in \pi_1 \cup \cdots \cup \pi_k} w_{i,j} \, ,
\end{equation}
where $\Pi^{*(k)}_{m,n}$ is the set of $k$-tuples of non-intersecting directed lattice paths in $\N^2$ starting at $(m,1)$, $(m,2)$, \dots, $(m,k)$ and ending at $(1,n-k+1)$, $(1,n-k+2)$, \dots, $(1,n)$ respectively.
In particular, for $k=1$,
\begin{equation}
\label{eq:non-intersPaths_partitionFn}
t_{m,n} 
= \sum_{\pi \in \Pi^{(1)}_{m,n}} \prod_{(i,j)\in \pi} w_{i,j} \, ,
\end{equation}
and, for $k=m\wedge n$,
\begin{equation}
\label{eq:non-intersPaths_diagProd}
P_{n-m}(\bm{t})
= \prod_{i=1}^m \prod_{j=1}^n w_{i,j} \, .
\end{equation}
\end{proposition}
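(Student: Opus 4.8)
The plan is to derive the north-east partition-function formula for $\sfB$ from the known south-east formula for $\sfK$ (Proposition~\ref{prop:non-intersPaths_gRSK}) via the second commuting diagram of Theorem~\ref{thm:gRSK-Burge-Schutz}. That diagram reads $\sfT\sfS\sfT\,\sfK=\sfB\,\sfR$; since $\sfR$ is an involution it can be rewritten as $\sfB=\sfT\sfS\sfT\,\sfK\,\sfR$ on rectangular matrices. The key structural fact I would exploit is that $\sfT\sfS\sfT$ modifies only the upper part of its argument and hence \emph{fixes every entry lying on the $(n-m)$-th diagonal}. As the left-hand side of~\eqref{eq:non-intersPaths} is exactly the product of the last $k$ entries of that diagonal, this invariance is precisely what allows us to trade $\sfB$ for $\sfK$.

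The first step, which I expect to be the main obstacle, is to reduce from a general Young diagram $\lambda$ to the rectangle $m\times n$, since Theorem~\ref{thm:gRSK-Burge-Schutz} and the maps $\sfS,\sfR,\sfC$ are only available for matrices. Because $(m,n)\in\lambda$ and $\lambda$ is a Young diagram, the full rectangle $m\times n=\{1,\dots,m\}\times\{1,\dots,n\}$ is contained in $\lambda$, and I would compute $\sfB^{\lambda}$ using a box-ordering that exhausts $m\times n$ first and only afterwards fills $\lambda\setminus(m\times n)$; this is admissible because $m\times n$ is itself a Young diagram and nested Young diagrams are always connected by a growing sequence of Young diagrams. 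Two localisation observations then finish the reduction. First, inspecting~\eqref{eq:tau}, \eqref{eq:c}, and~\eqref{eq:d} shows that each $\tau_{k,l}$ with $(k,l)\in m\times n$ reads and writes only entries inside $m\times n$ (the forward references $w_{i+1,j},w_{i,j+1}$ occur only for $i<k\le m$, $j<l\le n$), so running these maps inside $\R_{>0}^{\lambda}$ reproduces $\sfB^{m\times n}$ applied to the restriction $\bm{w}|_{m\times n}$. Second, the border-box hypothesis $(m+1,n+1)\notin\lambda$ forces every remaining box $(k,l)\in\lambda\setminus(m\times n)$ to satisfy either $k>m,\ l\le n$ or $k\le m,\ l>n$, hence $l-k\ne n-m$; since $\tau_{k,l}$ alters only the $(l-k)$-th diagonal, the maps applied after the rectangle is filled leave the $(n-m)$-th diagonal untouched. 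Consequently $t_{m,n},\dots,t_{m-k+1,n-k+1}$ agree with the corresponding bottom-right diagonal entries of $\sfB^{m\times n}(\bm{w}|_{m\times n})$, so it suffices to prove~\eqref{eq:non-intersPaths} for rectangular input.

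For rectangular $\bm{w}\in\R_{>0}^{m\times n}$ I would set $\tilde{\bm t}:=\sfK(\bm{w}^{\sfR})$ and combine $\sfB=\sfT\sfS\sfT\,\sfK\,\sfR$ with the diagonal-invariance of $\sfT\sfS\sfT$ to conclude $t_{m-r,n-r}=\tilde t_{m-r,n-r}$ for $0\le r\le k-1$. Applying Proposition~\ref{prop:non-intersPaths_gRSK} to $\tilde{\bm t}$ then gives
\[
t_{m,n}\cdots t_{m-k+1,n-k+1}
=\sum_{(\pi_1,\dots,\pi_k)\in\Pi^{(k)}_{m,n}}\ \prod_{(i,j)\in\pi_1\cup\cdots\cup\pi_k} w^{\sfR}_{i,j},
\]
where $w^{\sfR}_{i,j}=w_{m-i+1,j}$. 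The final step is purely combinatorial: the vertical reflection $\phi\colon(i,j)\mapsto(m-i+1,j)$ is an involution of $\N^2$ that converts south-east directed paths into north-east directed paths, preserves non-intersection, and sends the sources $(1,1),\dots,(1,k)$ and sinks $(m,n-k+1),\dots,(m,n)$ to $(m,1),\dots,(m,k)$ and $(1,n-k+1),\dots,(1,n)$. Thus $\phi$ is a weight-preserving bijection $\Pi^{(k)}_{m,n}\to\Pi^{*(k)}_{m,n}$, in the sense that $\prod_{(i,j)\in\pi}w^{\sfR}_{i,j}=\prod_{(i,j)\in\phi(\pi)}w_{i,j}$, and re-indexing the sum through $\phi$ produces exactly~\eqref{eq:non-intersPaths}. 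Finally, \eqref{eq:non-intersPaths_partitionFn} is the case $k=1$ (a single path from $(m,1)$ to $(1,n)$, the non-intersection constraint being vacuous), while \eqref{eq:non-intersPaths_diagProd} is the case $k=m\wedge n$: there the last $k$ diagonal entries exhaust the entire $(n-m)$-th diagonal, and the unique non-intersecting family in $\Pi^{*(m\wedge n)}_{m,n}$ covers every box of $m\times n$, so the right-hand side collapses to the single product $\prod_{i,j}w_{i,j}$.
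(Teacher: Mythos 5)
Your proof is correct and follows essentially the same route as the paper: reduce to the rectangular case using that the post-rectangle diagonal maps $\tau_{k,l}$ leave the $(n-m)$-th diagonal untouched (by the border-box hypothesis), then transfer Proposition~\ref{prop:non-intersPaths_gRSK} through Theorem~\ref{thm:gRSK-Burge-Schutz} and the fact that the Sch\"utzenberger maps fix that diagonal, finishing with a reflection of the path families. The only cosmetic difference is that you use the second diagram ($\sfB = \sfT\sfS\sfT\,\sfK\,\sfR$, a row reflection) where the paper uses the first ($\sfB = \sfS\,\sfK\,\sfC$, a column reflection); these are interchangeable.
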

\begin{proof}
Identities~\eqref{eq:non-intersPaths_partitionFn} and~\eqref{eq:non-intersPaths_diagProd} are straightforward consequences of~\eqref{eq:non-intersPaths}, hence we only need to prove the latter.
It is clear from~\eqref{eq:gBurge} that
\begin{equation}
\label{eq:gBurge2}
\bm{t}
= \sfB^{\lambda}(\bm{w})
=\tau_{i_l, j_l} \circ \cdots \circ \tau_{i_1, j_1} \circ \sfB^{m\times n} (\bm{w}) \, ,
\end{equation}
where $l= \abs{\lambda} - mn$ and $(i_1,j_1), \dots, (i_l,j_l)$ are chosen so that $(m\times n) \cup \{(i_1,j_1), \dots, (i_h, j_h)\}$ is a Young diagram for all $1\leq h\leq l$.
Since by hypothesis $(m,n)$ is the last box of $\lambda$ on the corresponding diagonal, the application of $\tau_{i_1, j_1}$, \dots, $\tau_{i_l, j_l}$ in~\eqref{eq:gBurge2} does not modify the $(n-m)$-th diagonal of $\sfB^{m\times n} (\bm{w})$.
It then suffices to prove~\eqref{eq:non-intersPaths} when $\lambda = m\times n$ is a rectangular partition, so we now restrict to this case.
By Theorem~\ref{thm:gRSK-Burge-Schutz} we have that $\bm{t} = \sfB (\bm{w}) = \tilde{\bm{t}}^{\sfS}$, where $\tilde{\bm{t}} := \sfK (\bm{w}^{\sfC})$.
By Proposition~\ref{prop:non-intersPaths_gRSK}, we have that
\[
\begin{split}
\tilde{t}_{m,n} \tilde{t}_{m-1,n-1} \cdots \tilde{t}_{m-k+1, n-k+1}
&= \sum_{(\pi_1,\dots,\pi_k) \in \Pi^{(k)}_{m,n}} \prod_{(i,j)\in \pi_1 \cup \cdots \cup \pi_k} w_{i,n-j+1} \\
&= \sum_{(\pi_1,\dots,\pi_k) \in \Pi^{*(k)}_{m,n}} \prod_{(i,j)\in \pi_1 \cup \cdots \cup \pi_k} w_{i,j} \, .
\end{split}
\]
The geometric Sch\"utzenberger involution does not modify the $(n-m)$-th diagonal of $\tilde{\bm{t}}$, hence $\tilde{t}_{m,n} = t_{m,n}$, $\tilde{t}_{m-1,n-1} = t_{m-1,n-1}$, \dots, $\tilde{t}_{m-k+1,n-k+1} = t_{m-k+1,n-k+1}$.
The above display then proves~\eqref{eq:non-intersPaths} in the case $\lambda= m\times n$.
\end{proof}

We now state another property of the geometric Burge correspondence, which expresses the sum of certain ratios of output entries as a sum of inverse input entries.

\begin{proposition}
\label{prop:sumInvWeights}
Let $\bm{w}\in \R_{>0}^{\lambda}$ and $\bm{t}:= \sfB(\bm{w})$.
We have that
\begin{align}
\label{eq:sumInvWeights_diag}
\frac{1}{t_{1,1}}
&= \sum_{i\colon (i,i)\in \lambda} \frac{1}{w_{i,i}} \, , \\
\label{eq:sumInvWeights}
\sum_{(i,j)\in \lambda} \frac{t_{i-1,j} + t_{i,j-1}}{t_{i,j}}
&= \sum_{(i,j)\in \lambda} \frac{1}{w_{i,j}} \, ,
\end{align}
with the convention that $t_{0,1}=t_{1,0}=1/2$ and $t_{0,k}=t_{k,0}=0$ for all $k>1$.
\end{proposition}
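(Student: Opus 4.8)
The plan is to establish both \eqref{eq:sumInvWeights_diag} and \eqref{eq:sumInvWeights} by induction on the boxes of $\lambda$, adding one corner box $(m,n)$ at a time and using the recursive description $\sfB^{\lambda} = \tau_{m,n}\circ\sfB^{\lambda\setminus\{(m,n)\}}$ from \eqref{eq:gRSK-gBurge_recursive}. Throughout, write $\bm{s}:=\sfB^{\lambda\setminus\{(m,n)\}}(\bm{w})$, so that $\bm{t}=\tau_{m,n}(\bm{s})$; since $(m,n)$ is a corner box, the spectator entry at $(m,n)$ is untouched, i.e.\ $s_{m,n}=w_{m,n}$. The base case $\lambda=\{(1,1)\}$ reduces to $\tau_{1,1}=\sfc_{1,1}$ together with the boundary convention $w_{0,1}+w_{1,0}=1$, giving $t_{1,1}=w_{1,1}$.

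For the diagonal identity \eqref{eq:sumInvWeights_diag} I would track the single quantity $1/t_{1,1}$. Because $\tau_{m,n}$ alters only the $(n-m)$-th diagonal of its input, when $m\neq n$ the main-diagonal entry $t_{1,1}$ is left unchanged while the right-hand side acquires no new summand, so the identity persists trivially. When $m=n$, the unique constituent of $\tau_{m,m}$ in \eqref{eq:tau} that touches position $(1,1)$ is the first map $\sfd^{m,m}_{1,1}$; substituting $w_{0,1}+w_{1,0}=1$ and $s_{m,m}=w_{m,m}$ into the first line of \eqref{eq:d} yields $1/t_{1,1}=1/s_{1,1}+1/w_{m,m}$, and the inductive hypothesis on $\bm{s}$ closes the step.

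For the global identity \eqref{eq:sumInvWeights} I would track the functional $H(\bm{w}):=\sum_{(i,j)\in\lambda}(w_{i-1,j}+w_{i,j-1})/w_{i,j}$, whose value on $\bm{t}$ is the left-hand side; the inductive step then reduces to the single relation $H(\bm{t})-H(\bm{s})=1/w_{m,n}$, matching the new term gained on the right. The decisive simplification is again locality: since $\tau_{m,n}$ rewrites only the entries on the $(n-m)$-th diagonal, the summands of $H(\bm{t})$ that can differ from those of $H(\bm{s})$ are exactly those indexed by positions on the three diagonals $n-m-1$, $n-m$, $n-m+1$ — for a point of the processed diagonal only the denominator moves (its upper-left neighbours are frozen), while for each bordering position on an adjacent diagonal exactly one numerator entry moves. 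I would then run $\tau_{m,n}$ one local map at a time, from $\sfd^{m,n}_{m-h+1,n-h+1}$ down to $\sfc_{m,n}$ with $h=m\wedge n$, carrying the running value of the accumulator at $(m,n)$ and showing that the affected summands telescope; already in the first genuinely two-dimensional case $\lambda=2\times 2$ (adding the corner $(2,2)$) one verifies directly that the three altered summands recombine to leave precisely $H(\bm{t})-H(\bm{s})=1/w_{2,2}$, and the general step is this same recombination carried along the diagonal.

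The main obstacle is organising this telescoping for a processed diagonal of arbitrary length $h$: the accumulator at $(m,n)$ is rewritten by every $\sfd^{m,n}_{i,j}$, so the change in $H$ is not confined to a single map. I would handle this by isolating an interface quantity — the current accumulator value paired with the frozen numerators $s_{i-1,j}+s_{i,j-1}$ adjacent to the already-processed part of the diagonal — and proving a one-step lemma that the net contribution of each $\sfd^{m,n}_{i,j}$ equals the increment of this interface term, with the final $\sfc_{m,n}$ resolving the interface into the boundary value $1/w_{m,n}$. In the rectangular case a cleaner alternative derivation of \eqref{eq:sumInvWeights} is available through Theorem~\ref{thm:gRSK-Burge-Schutz}, in the spirit of the proof of Proposition~\ref{prop:non-intersPaths}: writing $\sfB=\sfS\sfK\sfC$ and noting that $\sfC$ leaves $\sum_{(i,j)}1/w_{i,j}$ invariant, it would suffice to establish the companion RSK identity $H(\sfK(\bm{w}))=\sum_{(i,j)}1/w_{i,j}$ (provable by the same corner-box induction) together with the invariance of $H$ under the geometric Sch\"utzenberger involution $\sfS$, after which $H(\sfB(\bm{w}))=H(\sfK(\bm{w}^{\sfC}))=\sum_{(i,j)}1/w_{i,j}$.
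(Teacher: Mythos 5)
Your proposal is correct, and for the main identity \eqref{eq:sumInvWeights} it is essentially the paper's own argument: a corner-box induction via $\sfB^{\lambda}=\tau_{m,n}\circ\sfB^{\lambda\setminus\{(m,n)\}}$, reduction to $H(\bm{t})-H(\bm{s})=1/w_{m,n}$, localization of the changed summands to the three diagonals $n-m-1$, $n-m$, $n-m+1$, and a telescoping of the accumulator at $(m,n)$ through the successive $\sfd^{m,n}_{i,j}$'s. Your ``interface quantity'' is exactly the reciprocal of the running accumulator $t^{(i)}_{m,n}$ in the paper's computation, and the one-step lemma you postulate is precisely the identity the paper verifies by combining the two expressions for $t_{i,p+i}$ (one in terms of $t^{(i-1)}_{m,n}$, one in terms of $t^{(i)}_{m,n}$) so that the difference collapses to $\sum_i\bigl[1/t^{(i-1)}_{m,n}-1/t^{(i)}_{m,n}\bigr]+1/t^{(m-1)}_{m,n}=1/w_{m,n}$; so your plan is sound but stops short of writing out this verification, which is the only substantive computation in the proof. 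Where you genuinely diverge is \eqref{eq:sumInvWeights_diag}: the paper obtains it as a corollary of Proposition~\ref{prop:non-intersPaths} (dividing the $k=n-1$ case of \eqref{eq:non-intersPaths} by \eqref{eq:non-intersPaths_diagProd}), which ultimately routes through Theorem~\ref{thm:gRSK-Burge-Schutz}, whereas your direct induction — only $\sfd^{m,m}_{1,1}$ touches position $(1,1)$, and the boundary convention $w_{0,1}+w_{1,0}=1$ together with $s_{m,m}=w_{m,m}$ gives $1/t_{1,1}=1/s_{1,1}+1/w_{m,m}$ — is correct, more elementary, and self-contained within the local-map definition. Your alternative rectangular-case derivation of \eqref{eq:sumInvWeights} via $\sfB=\sfS\sfK\sfC$, the known gRSK identity, and the invariance of $H$ under $\sfa_{i,j}$ and $\sfb_{i,j}$ is also valid, though as you note it does not by itself cover general Young-diagram shapes, for which the inductive argument remains necessary.
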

\begin{proof}
Let $(n,n)$ be the (only) border box of $\lambda$ on the main diagonal.
Then~\eqref{eq:sumInvWeights_diag} can be obtained by setting $m=n$ in Proposition~\ref{prop:non-intersPaths} and dividing both sides of equation~\eqref{eq:non-intersPaths}, taken with $k=n-1$, by the corresponding sides of~\eqref{eq:non-intersPaths_diagProd}.

To prove~\eqref{eq:sumInvWeights}, we proceed by induction on $\abs{\lambda}$.
If $\lambda$ has just the box $(1,1)$, then~\eqref{eq:sumInvWeights} follows for example from~\eqref{eq:sumInvWeights_diag}.
Assume now that $\abs{\lambda} >1$, and pick any corner box $(m,n)$ of $\lambda$.
Let $\bm{w}\in \R^{\lambda}_{>0}$ and $\bm{t}:= \sfB(\bm{w})$.
Set $\tilde{\lambda} := \lambda \setminus \{(m,n)\}$ and $\tilde{\bm{t}}:= \sfB^{\tilde{\lambda}}(\bm{w})$, so that $\bm{t} = \tau_{m,n}(\tilde{\bm{t}})$ by~\eqref{eq:gRSK-gBurge_recursive}.
By induction, the statement holds for the input array $\bm{w}$ and the output array $\tilde{\bm{t}}$ when they are both restricted to the partition $\tilde{\lambda}$.
It then suffices to prove that
\begin{equation}
\label{eq:sumInvWeights_induction}
\sum_{(i,j)\in \lambda} \frac{t_{i-1,j} + t_{i,j-1}}{t_{i,j}}
= \sum_{(i,j)\in \tilde{\lambda}} \frac{\tilde{t}_{i-1,j} + \tilde{t}_{i,j-1}}{\tilde{t}_{i,j}}
+ \frac{1}{w_{m,n}} \, .
\end{equation}
To fix the ideas, let us suppose $m\leq n$ and set $p:= n-m\geq 0$.
By~\eqref{eq:tau}, we then have that
\[
\tau_{m,n} = \sfc_{m,n} \circ \sfd^{m,n}_{m-1,n-1} \circ \dots \circ \sfd^{m,n}_{2,p+2} \circ \sfd^{m,n}_{1,p+1} \, .
\]
In particular, the arrays $\bm{t}$ and $\tilde{\bm{t}}$ only differ at the entries on the $p$-th diagonal.
Therefore, we may restrict the sums on both sides of~\eqref{eq:sumInvWeights_induction} to the terms that involve entries on the $p$-th diagonal of $\bm{t}$ and $\tilde{\bm{t}}$.
We are thus left to prove that
\begin{equation}
\label{eq:sumInvWeights_induction2}
\begin{split}
&\sum_{i=1}^{m-1} \left[ \frac{t_{i-1,p+i} + t_{i,p+i-1}}{t_{i,p+i}} + t_{i,p+i} \left(\frac{1}{t_{i+1,p+i}} + \frac{1}{t_{i,p+i+1}}\right) \right] + \frac{t_{m-1,n} + t_{m,n-1}}{t_{m,n}} \\
= \, &\sum_{i=1}^{m-1} \left[ \frac{t_{i-1,p+i} + t_{i,p+i-1}}{\tilde{t}_{i,p+i}} + \tilde{t}_{i,p+i} \left(\frac{1}{t_{i+1,p+i}} + \frac{1}{t_{i,p+i+1}}\right) \right] +\frac{1}{w_{m,n}} \, .
\end{split}
\end{equation}
To prove the latter, set $\bm{t}^{(0)} := \tilde{\bm{t}}$ and $\bm{t}^{(i)} := \sfd^{m,n}_{i,p+i}(\bm{t}^{(i-1)})$ for $1\leq i\leq m-1$, so that $\bm{t} = \sfc_{m,n}(\bm{t}^{(m-1)})$.
By definitions~\eqref{eq:d} and~\eqref{eq:c}, we thus have that
\begin{align}
\label{eq:sumInvWeights_d}
t_{i,p+i}
&= \left( \frac{1}{\tilde{t}_{i,p+i}} + \frac{1}{t^{(i-1)}_{m,n} (t_{i-1,p+i} + t_{i,p+i-1})} \right)^{-1} &&\text{for }1\leq i\leq m-1 \, , \\
\label{eq:sumInvWeights_c}
t_{m,n} &= t^{(m-1)}_{m,n}(t_{m-1,n} + t_{m,n-1}) \, .
\end{align}
Again from~\eqref{eq:d} it also follows that
\[
t^{(i)}_{m,n} = \left( \frac{t^{(i-1)}_{m,n} (t_{i-1,p+i} + t_{i,p+i-1})}{\tilde{t}_{i,p+i}^2} + \frac{1}{\tilde{t}_{i,p+i}} \right) \left(\frac{1}{t_{i+1,p+i}} + \frac{1}{t_{i,p+i+1}} \right)^{-1}
\]
for $1\leq i\leq m-1$, which is equivalent to
\begin{equation}
\label{eq:sumInvWeights_d2}
t^{(i-1)}_{m,n}
= \left[ t^{(i)}_{m,n} \left(\frac{1}{t_{i+1,p+i}} + \frac{1}{t_{i,p+i+1}} \right) - \frac{1}{\tilde{t}_{i,p+i}} \right] \frac{\tilde{t}_{i,p+i}^2}{t_{i-1,p+i} + t_{i,p+i-1}} \, .
\end{equation}
From~\eqref{eq:sumInvWeights_d} and~\eqref{eq:sumInvWeights_d2} we obtain another expression for $t_{i,p+i}$ that involves $t^{(i)}_{m,n}$ instead of $t^{(i-1)}_{m,n}$:
\begin{equation}
\label{eq:sumInvWeights_d3}
t_{i,p+i} = \tilde{t}_{i,p+i} - \frac{1}{t^{(i)}_{m,n}} \left(\frac{1}{t_{i+1,p+i}} + \frac{1}{t_{i,p+i+1}} \right)^{-1} \, .
\end{equation}
We now compute the left-hand side of~\eqref{eq:sumInvWeights_induction2} by using~\eqref{eq:sumInvWeights_d} for the first occurrence of $t_{i,p+i}$ ($1\leq i\leq m-1$), \eqref{eq:sumInvWeights_d3} for the second occurrence of $t_{i,p+i}$ ($1\leq i\leq m-1$), and~\eqref{eq:sumInvWeights_c} for the only occurrence of $t_{m,n}$:
\[
\begin{split}
&\sum_{i=1}^{m-1} \left[ \frac{t_{i-1,p+i} + t_{i,p+i-1}}{t_{i,p+i}} + t_{i,p+i} \left(\frac{1}{t_{i+1,p+i}} + \frac{1}{t_{i,p+i+1}}\right) \right] + \frac{t_{m-1,n} + t_{m,n-1}}{t_{m,n}} \\
= \, & \sum_{i=1}^{m-1} \left[ \frac{t_{i-1,p+i} + t_{i,p+i-1}}{\tilde{t}_{i,p+i}} + \frac{1}{t^{(i-1)}_{m,n}} + \tilde{t}_{i,p+i} \left(\frac{1}{t_{i+1,p+i}} + \frac{1}{t_{i,p+i+1}}\right) - \frac{1}{t^{(i)}_{m,n}} \right] + \frac{1}{t^{(m-1)}_{m,n}} \, .
\end{split}
\]
Noticing that
\[
\sum_{i=1}^{m-1} \left[ \frac{1}{t^{(i-1)}_{m,n}} - \frac{1}{t^{(i)}_{m,n}} \right] + \frac{1}{t^{(m-1)}_{m,n}}
= \frac{1}{w_{m,n}} \, ,
\]
as the summation is telescopic and $t^{(0)}_{m,n} = \tilde{t}_{m,n} = w_{m,n}$, we conclude that~\eqref{eq:sumInvWeights_induction2} holds.
\end{proof}

\begin{remark}
All the geometric correspondences of Section~\ref{sec:geometricCorrespondences} have been defined via composition of local maps and hence, essentially, via a recursive procedure.
Furthermore, the geometric RSK correspondence possesses another recursive structure on the border output entries, as we now explain.
Let $(m,n)$ be a border box of a partition $\lambda$, $\bm{w}\in\R_{>0}^{\lambda}$, $\bm{s} := \sfK(\bm{w})$, and $\bm{t}:= \sfB(\bm{w})$.
It is immediate to see from the definition~\eqref{eq:gRSK} of geometric RSK correspondence that $s_{m,n} = (s_{m-1,n} + s_{m,n-1})w_{m,n}$.
However, the geometric Burge correspondence lacks such an obvious recursive structure, in the sense that $t_{m,n}$ cannot be written as a function of the entries of $\bm{t}$ and $\bm{w}$ in the boxes neighboring $(m,n)$.
This is also reflected by the fact that the right-hand side of~\eqref{eq:non-intersPaths_partitionFn}, which expresses $t_{m,n}$ in terms of directed paths on the input array $\bm{w}$, also lacks a recursive structure if viewed as a function of $(m,n)$.
Therefore, it is not clear how to prove~\eqref{eq:non-intersPaths_partitionFn} inductively, i.e.\ similarly to the proof strategy of Proposition~\ref{prop:sumInvWeights}.
On the other hand, it is easy to see that both sides of~\eqref{eq:non-intersPaths_diagProd} do have an inductive structure, so formula~\eqref{eq:non-intersPaths_diagProd} could be also proven inductively from the definition of geometric Burge correspondence.

\end{remark}

We now state and prove the volume-preserving property for the geometric Burge correspondence in log-log variables.

\begin{theorem}
\label{thm:volume}
Let $\bm{w}\in \R_{>0}^{\lambda}$ and $\bm{t}:= \sfB(\bm{w})$.
Then, the map
\[
(\log w_{i,j})_{(i,j)\in \lambda} \mapsto (\log t_{i,j})_{(i,j)\in \lambda}
\]
has Jacobian $\pm 1$.
\end{theorem}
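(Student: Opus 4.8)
The plan is to exploit the fact that $\sfB$ is, by definition~\eqref{eq:gBurge} together with~\eqref{eq:tau}, a composition of the elementary local maps $\sfc_{i,j}$ and $\sfd^{k,l}_{i,j}$ only (the diagonal map $\tau_{k,l}$ involves no other building blocks). Writing $\Phi := L \circ \sfB \circ L^{-1}$ for the conjugate of $\sfB$ by the coordinatewise logarithm $L\colon (w_{i,j}) \mapsto (\log w_{i,j})$, the map whose Jacobian we must control is exactly this $\Phi$. Since the Jacobian determinant of a composition is the product of the individual Jacobian determinants, it suffices to show that the log-log conjugate of each building block $\sfc_{i,j}$ and $\sfd^{k,l}_{i,j}$ has Jacobian $\pm 1$. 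The boundary constants ($w_{0,1}=w_{1,0}=1/2$, and so on) are fixed and play no role in the differentiation.

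First I would treat $\sfc_{i,j}$. This map modifies only the single entry $w_{i,j}$, replacing $\log w_{i,j}$ by $\log w_{i,j} + \log(w_{i-1,j}+w_{i,j-1})$, where the added term depends solely on entries that $\sfc_{i,j}$ leaves fixed. Hence the Jacobian matrix of its log-log conjugate is the identity except in the row indexed by $(i,j)$, whose diagonal entry is $1$ and whose only other nonzero entries are off-diagonal; being unipotent triangular, it has determinant $1$.

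The genuine content is the two-variable map $\sfd^{k,l}_{i,j}$, and this is the step I expect to be the main obstacle. It modifies the two entries $x:=w_{i,j}$ and $y:=w_{k,l}$, with all other entries fixed; writing $A:=w_{i-1,j}+w_{i,j-1}$ and $B:=w_{i+1,j}^{-1}+w_{i,j+1}^{-1}$ for the (fixed) coefficients, one rearranges~\eqref{eq:d} into
\[
x' = \frac{xyA}{yA+x}, \qquad y' = \frac{yA+x}{x^2 B}.
\]
Because only $x,y$ vary and they depend on the remaining variables only through the fixed quantities $A,B$, the full Jacobian of the log-log conjugate is block-triangular, so its determinant equals that of the $2\times 2$ block recording the dependence of $(\log x',\log y')$ on $(\log x,\log y)$. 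Setting $P:=\tfrac{yA}{yA+x}$ and $Q:=\tfrac{x}{yA+x}$, so that $P+Q=1$, a direct differentiation presents this block as $\bigl(\begin{smallmatrix} P & Q \\ Q-2 & P\end{smallmatrix}\bigr)$, whose determinant is $P^2-Q(Q-2)=(P-Q)(P+Q)+2Q=(P-Q)+2Q=P+Q=1$. As a consistency check, the relation $x'y'=yA/(xB)$ shows $\log x'+\log y'=-\log x+\log y+\text{const}$, one of the two linear relations implicit in this block.

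Combining the three steps, every factor in the composition defining $\Phi$ contributes Jacobian determinant $1$ in log-log variables, so $\Phi$ has Jacobian $\pm 1$ (in fact $+1$), as claimed. I note that an alternative route would invoke Theorem~\ref{thm:gRSK-Burge-Schutz}, writing $\sfB = \sfS\sfK\sfC$ on rectangular arrays and combining the known volume-preserving property of $\sfK$ from~\cite{oConnellSeppalainenZygouras14} with the fact that $\sfC$ is a coordinate permutation (hence Jacobian $\pm 1$); however, this handles directly only the rectangular case, whereas the local-map argument applies verbatim to arbitrary $\lambda$, so I would prefer the former.
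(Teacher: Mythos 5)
Your proposal is correct and follows essentially the same route as the paper: decompose $\sfB$ into the local maps $\sfc_{i,j}$ and $\sfd^{k,l}_{i,j}$ via~\eqref{eq:gBurge} and~\eqref{eq:tau}, note that $\sfc_{i,j}$ is trivially volume preserving in log-log variables, and reduce $\sfd^{k,l}_{i,j}$ to a $2\times 2$ determinant which (with your $P=g$, $Q=1-g$ in the paper's notation) is exactly the computation $g^2-(1-g)(-1-g)=1$ carried out there.
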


\begin{proof}
As $\sfB$ can be written as a composition of $c_{k,l}$'s and $d^{k,l}_{i,j}$'s (see~\eqref{eq:gBurge} and~\eqref{eq:tau}), it suffices to show that both these types of local maps are volume preserving in log-log variables.
This property is immediate for $c_{k,l}$, so we will prove it for $d^{k,l}_{i,j}$ only.
We will also suppose that $i,j>1$, as the proof simplifies when $i=1$ or $j=1$.
Set
\begin{align*}
x_1 &:= \log w_{i,j} \, ,
&x_3 &:= \log w_{i-1,j} \, ,
&x_5 &:= \log w_{i+1,j} \, , \\
x_2 &:= \log w_{k,l} \, ,
&x_4 &:= \log w_{i,j-1} \, ,
&x_6 &:= \log w_{i,j+1} \, .
\end{align*}
Looking at the definition~\eqref{eq:d} of $d^{k,l}_{i,j}$, we define the transformation $F\colon \R^6 \to \R^6$ with $i$-th component
\begin{align*}
F_i(x_1,\dots,x_6) =
\begin{cases}
-\log \left[ \e^{-x_1} + \e^{-x_2}(\e^{x_3} + \e^{x_4})^{-1} \right] &i=1 \, , \\
\log\left[ \e^{x_2 - 2x_1} (\e^{x_3} + \e^{x_4}) + \e^{-x_1} \right] - \log\left[\e^{-x_5} + \e^{-x_6}\right] &i=2 \, , \\
x_i &3\leq i\leq 6 \, .
\end{cases}
\end{align*}
To obtain the Jacobian of $F$, it suffices to compute the partial derivatives of $F_1$ and $F_2$ with respect of $x_1$ and $x_2$.
Setting
\[
g(x_1,x_2,x_3,x_4) := \frac{\e^{-x_1}}{ \e^{-x_1} + \e^{-x_2}(\e^{x_3} + \e^{x_4})^{-1} } \, ,
\]
one can easily obtain:
\[
\frac{\partial F_1}{\partial x_1} = g \, , \qquad\quad
\frac{\partial F_1}{\partial x_2} = 1-g \, , \qquad\quad
\frac{\partial F_2}{\partial x_1} = -1-g \, , \qquad\quad
\frac{\partial F_2}{\partial x_2} = g \, .
\]
Therefore, the modulus of the Jacobian of $F$ is given by
\[
\abs{\frac{\partial F_1}{\partial x_1} \frac{\partial F_2}{\partial x_2} - \frac{\partial F_1}{\partial x_2} \frac{\partial F_2}{\partial x_1}}
= \abs{g^2 - (1-g)(-1-g)} = 1 \, ,
\]
as desired.
\end{proof}

\section{The geometric Burge correspondence on symmetric arrays}
\label{sec:Burge_symmetric}

A \textbf{self-conjugate} partition, or equivalently a symmetric Young diagram, is a partition $\lambda$ such that $(i,j)\in\lambda$ if and only if $(j,i)\in \lambda$.
If $\lambda$ is a self-conjugate partition, an array $\bm{w}$ of shape $\lambda$ is called \textbf{symmetric} if $\bm{w}^{\sfT} = \bm{w}$, i.e.\ $w_{i,j}=w_{j,i}$ for all $(i,j)\in \lambda$.

The aim of this section is to explain how the geometric Burge correspondence behaves when restricted to symmetric arrays.
This will be a key ingredient for the polymer analysis in Section~\ref{sec:polymer}.

\begin{proposition}
\label{prop:Burge_symmetric}
For any $\bm{w}\in\R_{>0}^{\lambda}$, we have that $\sfB(\bm{w}^{\sfT}) = \sfB(\bm{w})^{\sfT}$.
In particular, if $\bm{w}$ is a symmetric array, then $\sfB(\bm{w})$ also is.
\end{proposition}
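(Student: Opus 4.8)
The plan is to prove the symmetry property $\sfB(\bm{w}^{\sfT}) = \sfB(\bm{w})^{\sfT}$ by establishing the corresponding conjugation relation at the level of the building-block local maps, and then propagating it through the composition that defines $\sfB$. The key structural observation is that transposition $\sfT$ intertwines the local maps: because transposing an array swaps the indices $(i,j) \leftrightarrow (j,i)$, and because each defining formula~\eqref{eq:a}--\eqref{eq:d} is manifestly symmetric under the simultaneous exchange $w_{i-1,j} \leftrightarrow w_{i,j-1}$ and $w_{i+1,j} \leftrightarrow w_{i,j+1}$, I expect the conjugation identities
\[
\sfT \circ \sfc_{i,j} \circ \sfT = \sfc_{j,i} \, ,
\qquad
\sfT \circ \sfd^{k,l}_{i,j} \circ \sfT = \sfd^{l,k}_{j,i}
\]
to hold on $\lambda'$-shaped arrays. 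The right-hand sides make sense precisely because $\lambda$ is self-conjugate, so $(i,j)\in\lambda$ iff $(j,i)\in\lambda$.

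First I would verify these local conjugation identities directly from the definitions~\eqref{eq:c} and~\eqref{eq:d}, treating the boundary conventions ($w_{0,1}=w_{1,0}=1/2$, etc.) with care since these too are symmetric under $\sfT$. Next I would lift them to the diagonal map $\tau_{k,l}$: since transposition sends the $(l-k)$-th diagonal to the $(k-l)$-th diagonal, and since $\tau_{k,l}$ is built as $\sfc_{k,l}\circ \sfd^{k,l}_{k-1,l-1}\circ\cdots\circ \sfd^{k,l}_{k-h+1,l-h+1}$ with $h=k\wedge l=l\wedge k$, conjugating term-by-term gives
\[
\sfT \circ \tau_{k,l} \circ \sfT = \tau_{l,k} \, .
\]
Finally I would apply this to the full composition~\eqref{eq:gBurge}. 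If $((i_1,j_1),\dots,(i_n,j_n))$ is an admissible box-ordering for $\lambda$ (each prefix a Young diagram), then $((j_1,i_1),\dots,(j_n,i_n))$ is an admissible ordering for $\lambda'=\lambda$, so
\[
\sfT \circ \sfB^{\lambda} \circ \sfT
= \sfT \circ \tau_{i_n,j_n}\circ\cdots\circ\tau_{i_1,j_1} \circ \sfT
= \tau_{j_n,i_n}\circ\cdots\circ\tau_{j_1,i_1}
= \sfB^{\lambda} \, ,
\]
inserting $\sfT\circ\sfT=\id$ between consecutive factors to conjugate each $\tau$ separately. Rearranging $\sfT\sfB\sfT=\sfB$ yields $\sfB(\bm{w}^{\sfT})=\sfB(\bm{w})^{\sfT}$, and the ``In particular'' clause follows immediately by setting $\bm{w}^{\sfT}=\bm{w}$.

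The main obstacle I anticipate is purely bookkeeping rather than conceptual: one must check that the set of admissible box-orderings is genuinely closed under the transposition $(i,j)\mapsto(j,i)$ (which uses self-conjugacy of $\lambda$), and that the well-definedness hypotheses on the local maps (the required presence of boxes $(i+1,j)$ and/or $(i,j+1)$) transpose correctly to the required presence of $(j+1,i)$ and/or $(j,i+1)$. A cleaner alternative, which I would mention as a remark, is to bypass the local-map verification entirely and deduce the statement from Theorem~\ref{thm:gRSK-Burge-Schutz} together with the known transposition-equivariance of geometric RSK: since $\sfB\sfC=\sfS\sfK$ and $\sfK$ commutes with $\sfT$, and since $\sfC$, $\sfR$, $\sfS$, $\sfT$ satisfy the relation $\sfT\sfC\sfT=\sfR$, the symmetry of $\sfB$ would follow formally. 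However, the direct local-map argument is self-contained and more transparent, so I would present that as the primary proof.
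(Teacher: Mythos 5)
Your primary argument is exactly the paper's proof: $\sfB$ is a composition of the local maps $\sfc_{k,l}$ and $\sfd^{k,l}_{i,j}$, each of which conjugates under $\sfT$ to its index-transposed counterpart, hence $\sfT\sfB\sfT=\sfB$; the paper states this in one line and you merely spell out the (correct) bookkeeping. One caveat on the alternative you mention as a remark: deducing the symmetry from Theorem~\ref{thm:gRSK-Burge-Schutz} would be circular in the paper's logical order, since the second commutative diagram there is itself derived using the fact that $\sfB$ commutes with transposition.
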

\begin{proof}
The equality $\sfB(\bm{w}^{\sfT}) = \sfB(\bm{w})^{\sfT}$ follows from the fact that $\sfB$ is a composition of local maps $c_{k,l}$'s and $d^{k,l}_{i,j}$'s, which trivially commute with the transposition map (see definitions~\eqref{eq:c} and~\eqref{eq:d}).
In particular, if $\bm{w}$ is symmetric, i.e.\ $\bm{w}=\bm{w}^{\sfT}$, then $\sfB(\bm{w}) = \sfB(\bm{w})^{\sfT}$, which means that $\sfB(\bm{w})$ is also symmetric.
\end{proof}

The latter proposition implies that the geometric Burge correspondence on symmetric arrays of shape $\lambda$ can be restricted to a bijection on arrays indexed by the ``upper part'' of $\lambda$, i.e. $\lambda^{\up}:= \{(i,j)\in \lambda\colon i\leq j\}$ (notice that, in general, $\lambda^{\up}$ is \emph{not} a partition).
Namely, there exists a bijection
\[
\sfB^{\up}\colon \R_{>0}^{\lambda^{\up}} \to \R_{>0}^{\lambda^{\up}} \, ,
\qquad\quad
(w_{i,j})_{(i,j)\in\lambda^{\up}} \mapsto (t_{i,j})_{(i,j)\in\lambda^{\up}}
\]
such that $\sfB(\bm{w})|_{\lambda^{\up}} = \sfB^{\up}(\bm{w}|_{\lambda^{\up}})$ for all $\bm{w}\in \R_{>0}^{\lambda}$.
One obvious way to obtain the output of $\sfB^{\up}$ is to take the input array indexed by $\lambda^{\up}$, symmetrize it about the diagonal, apply the geometric Burge correspondence, thus obtaining (via Proposition~\ref{prop:Burge_symmetric}) another symmetric array, which can be restricted back to $\lambda^{\up}$.
Another equivalent way is to define new local maps, by slightly modifying the original definitions, and apply them directly to the restricted array $(w_{i,j})_{(i,j)\in\lambda^{\up}}$.
More precisely, the new local maps $\sfc^{\up}_{k,l}$ and $\sfd^{k,l,\up}_{i,j}$ need to be defined as follows:
\begin{itemize}
\item If $i<j$ and $k<l$, then $\sfc^{\up}_{i,j} := \sfc_{i,j}$ and $\sfd^{k,l,\up}_{i,j} := \sfd^{k,l}_{i,j}$.
\item If $i=j$ and $k=l$, then $\sfc^{\up}_{k,l}$ and $\sfd^{k,l,\up}_{i,j}$ only modify the following entries:
\begin{align}
\label{eq:c_symm}
\sfc_{i,i}^{\up} &\colon w_{i,i} \longmapsto 2 w_{i-1,i} w_{i,i} \, , \\
\label{eq:d_symm}
\sfd^{k,k,\up}_{i,i} &\colon
\begin{cases}
w_{i,i} \longmapsto \left( \dfrac{1}{w_{i,i}} + \dfrac{1}{2 w_{i-1,i} w_{k,k} } \right)^{-1} \, , \\
w_{k,k} \longmapsto \left( \dfrac{2 w_{i-1,i} w_{k,k}}{w_{i,i}^2} + \dfrac{1}{w_{i,i}} \right) \dfrac{w_{i,i+1}}{2} \, ,
\end{cases}
\end{align}
with the usual conventions that $w_{0,1}=1/2$ and $w_{0,k}=0$ for all $k>1$.
\end{itemize}
These new local maps can be obtained by just specializing~\eqref{eq:c}-\eqref{eq:d} to the symmetric case.
We then define $\tau^{\up}_{k,l}$, for all $k\leq l$, by just replacing $\sfc_{k,l}$ with $\sfc^{\up}_{k,l}$ and each $\sfd^{k,l}_{i,j}$ with $\sfd^{k,l,\up}_{i,j}$ in the definition~\eqref{eq:tau} of $\tau_{k,l}$.
We can finally construct $\sfB^{\up}$ by setting
\begin{equation}
\label{eq:gBurge_symm}
\sfB^{\up} := \tau_{i_n,j_n}^{\up} \circ \tau_{i_{n-1},j_{n-1}}^{\up} \circ \dots \circ \tau_{i_1,j_1}^{\up} \, ,
\end{equation}
where $((i_1,j_1),\dots,(i_n,j_n))$ is any sequence of distinct boxes such that, for each $1\leq k\leq n$, 
\[
\lambda^{(k)} := \{(i_1,j_1)\dots,(i_k,j_k)\}_{k=1}^n \cup \{(j_1,i_1)\dots,(j_k,i_k)\}_{k=1}^n
\]
is a Young diagram, and $\lambda^{(n)} = \lambda$.

The properties stated in Propositions~\ref{prop:non-intersPaths} and~\ref{prop:sumInvWeights} automatically hold when the geometric Burge correspondence is restricted to symmetric arrays.
On the other hand, the volume-preserving property does not follow immediately and is addressed in the next theorem.

\begin{theorem}
\label{thm:volume_sym}
Let $\bm{w}\in \R^{\lambda}_{>0}$ be a symmetric array and let $\bm{t} := \sfB(\bm{w})$.
Then, the map
\[
(\log w_{i,j})_{(i,j)\in \lambda, \, i\leq j} \mapsto (\log t_{i,j})_{(i,j)\in \lambda, \, i\leq j}
\]
has Jacobian $\pm 1$.
\end{theorem}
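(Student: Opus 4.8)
The plan is to exploit the fact that, by the construction~\eqref{eq:gBurge_symm}, the restricted map $\sfB^{\up}$ is a composition of the local maps $\sfc^{\up}_{k,l}$ and $\sfd^{k,l,\up}_{i,j}$ acting on $\R_{>0}^{\lambda^{\up}}$. Passing to logarithmic coordinates $(\log w_{i,j})_{(i,j)\in\lambda,\,i\leq j}$ and invoking the chain rule, the Jacobian of the full log-log map factorizes as a product of the Jacobians of the individual local maps. It therefore suffices to prove that each of these building blocks is volume preserving in log-log variables. I would split the analysis into two groups: the genuinely off-diagonal maps (indexed by boxes strictly above the main diagonal) and the two diagonal maps $\sfc^{\up}_{i,i}$ and $\sfd^{k,k,\up}_{i,i}$. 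Since the identification $\sfB(\bm{w})|_{\lambda^{\up}}=\sfB^{\up}(\bm{w}|_{\lambda^{\up}})$ has already been established in the discussion preceding the theorem, no separate argument relating $\sfB^{\up}$ to the ambient $\sfB$ is needed.

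For a box $(i,j)$ with $i<j$ (and correspondingly $k<l$), the modified maps $\sfc^{\up}_{i,j}$ and $\sfd^{k,l,\up}_{i,j}$ coincide by definition with the ambient maps $\sfc_{i,j}$ and $\sfd^{k,l}_{i,j}$. One first checks the elementary fact that, when $i<j$, every entry referenced by these maps, namely $w_{i-1,j}$, $w_{i,j-1}$, $w_{i+1,j}$, $w_{i,j+1}$, and $w_{k,l}$, again lies in $\lambda^{\up}$ (possibly on the diagonal). Hence the restricted map is literally the ambient one, read only against coordinates it does not modify, and its volume preservation is exactly the content of Theorem~\ref{thm:volume}. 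The first diagonal map, $\sfc^{\up}_{i,i}\colon w_{i,i}\mapsto 2w_{i-1,i}w_{i,i}$, acts in log variables as the shear $\log w_{i,i}\mapsto \log 2+\log w_{i-1,i}+\log w_{i,i}$; it alters a single coordinate with unit diagonal derivative and therefore has Jacobian $1$.

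The only genuine computation, and the crux of the proof, is the Jacobian of $\sfd^{k,k,\up}_{i,i}$. Following the proof of Theorem~\ref{thm:volume}, I would set $x_1:=\log w_{i,i}$ and $x_2:=\log w_{k,k}$, treat $\log w_{i-1,i}$ and $\log w_{i,i+1}$ as fixed parameters, and write the two updated log-entries $F_1,F_2$ directly from~\eqref{eq:d_symm}. The key observation is that, upon introducing the same auxiliary quantity
\[
g:=\frac{\e^{-x_1}}{\e^{-x_1}+\e^{-x_2}/(2w_{i-1,i})}
\]
as in the non-symmetric case, the four relevant partial derivatives collapse to $\partial_{x_1}F_1=g$, $\partial_{x_2}F_1=1-g$, $\partial_{x_1}F_2=-1-g$, and $\partial_{x_2}F_2=g$: the extra factors of $2$ arising from the symmetric specialization enter only additively inside logarithms and thus disappear under differentiation. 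Consequently the $2\times2$ Jacobian determinant is $g^2-(1-g)(-1-g)=1$, matching the value obtained in Theorem~\ref{thm:volume}.

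I expect the main difficulty to be bookkeeping rather than conceptual. One must confirm that the factor-of-$2$ corrections in~\eqref{eq:c_symm}–\eqref{eq:d_symm}, which come from the symmetric identifications $w_{i,j-1}=w_{i-1,i}$ and $w_{i+1,j}=w_{i,i+1}$ along the diagonal, are positioned so that the \emph{logarithmic} derivatives of $\sfd^{k,k,\up}_{i,i}$ still reproduce those of the ambient map $\sfd^{k,l}_{i,j}$. Once this alignment is verified and the two diagonal Jacobians are in hand, the product over all local maps equals $\pm1$, completing the proof.
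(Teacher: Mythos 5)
Your proposal is correct and follows essentially the same route as the paper: reduce to the restricted map $\sfB^{\up}$, factor it into the local maps $\sfc^{\up}_{k,l}$ and $\sfd^{k,l,\up}_{i,j}$, invoke Theorem~\ref{thm:volume} for the strictly off-diagonal ones, and check the two diagonal maps directly. The explicit $2\times 2$ computation you give for $\sfd^{k,k,\up}_{i,i}$ (with the determinant $g^2-(1-g)(-1-g)=1$) is exactly the check the paper declares ``totally analogous'' and omits, and your verification of it is accurate.
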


\begin{proof}
Set $\lambda^{\up}:= \{(i,j)\in \lambda\colon i\leq j\}$.
As explained above, we have that $\bm{t}|_{\lambda^{\up}} = \sfB^{\up}(\bm{w}|_{\lambda^{\up}})$.
Since $\sfB^{\up}$ can be defined by~\eqref{eq:gBurge_symm}, it is a composition of $\sfc^{\up}_{i,j}$'s and $\sfd^{k,l,\up}_{i,j}$'s.
Therefore, it suffices to prove that these modified local maps are volume preserving in log-log variables.
If $i<j$ and $k<l$, the new local maps coincide with the old ones, which possess this property as shown in the proof of Theorem~\ref{thm:volume}.
If $i=j$ and $k=l$, the new local maps are given by~\eqref{eq:c_symm} and~\eqref{eq:d_symm}: in this case, the check is still totally analogous to the one done for Theorem~\ref{thm:volume}, so we omit it.
\end{proof}

\section{Polymer replicas and Whittaker functions}
\label{sec:polymer}

In this section we study a polymer model in a persymmetric environment, as discussed in Subsection~\ref{subsec:intro_contribution}.
In particular, thanks to the direct connection between the point-to-point partition function in a persymmetric environment and the replica partition function (see \eqref{eq:overlappingPolymers}-\eqref{eq:replica}), we determine the distribution of the latter as a Whittaker measure.

Let us first introduce Whittaker functions.
For any triangular array $\bm{z} = (z_{i,j})_{1\leq j\leq i\leq n}$, define the \textbf{energy} of $\bm{z}$ to be the functional
\[
\mathcal{E}(\bm{z})
:= \sum_{i=1}^{n-1} \sum_{j=1}^i \left(\frac{z_{i+1,j+1}}{z_{i,j}} + \frac{z_{i,j}}{z_{i+1,j}} \right) \, .
\]
Define also the \textbf{(geometric) type} of $\bm{z}$ to be the $n$-vector, denoted by $\type(\bm{z})$, whose $i$-th component is the ratio between the product of the $i$-th row of $\bm{z}$ and the product of its $(i-1)$-th row:
\begin{equation}
\label{eq:type}
\type(\bm{z})_i := \frac{\prod_{j=1}^i z_{i,j}}{\prod_{j=1}^{i-1} z_{i-1,j}} \, , \qquad\qquad 1\leq i\leq n \, .
\end{equation}
We then define the $\GL_n(\R)$-\textbf{Whittaker function} with parameter $\bm{\alpha}=(\alpha_1,\dots,\alpha_n)\in\C^n$ and argument $\bm{x}=(x_1,\dots,x_n)\in\R_{>0}^n$ as
\begin{equation}
\label{eq:Whittaker}
\Psi^{n}_{\bm{\alpha}}(\bm{x})
:= \int_{\mathcal{T}^n(\bm{x})} \prod_{i=1}^n \type(\bm{z})_i^{\alpha_i} \cdot
\e^{-\mathcal{E}(\bm{z})} \prod_{\substack{1\leq i <n \\ 1\leq j\leq i}} \frac{\diff z_{i,j}}{z_{i,j}} \, ,
\end{equation}
where $\mathcal{T}^n(\bm{x})$ is the set of all triangular arrays $\bm{z} = (z_{i,j})_{1\leq j\leq i\leq n}$ with positive entries and bottom row $(z_{n,1},\dots,z_{n,n}) = (x_1,\dots,x_n) = \bm{x}$.

Recall that a random variable $Y$ follows an inverse gamma distribution with parameters $\alpha>0$ and $\beta>0$ if
\begin{equation}
\label{eq:inverseGamma}
\P(Y\in \diff y) 
= \frac{\beta^{\alpha}}{\Gamma(\alpha)} y^{-\alpha} \exp\left(-\frac{\beta}{y}\right) \1_{\{y>0\}} \frac{\diff y}{y} \, ,
\end{equation}
in which case we write $Y \sim \invGamma(\alpha,\beta)$.

Fix now parameters $\bm{\alpha} = (\alpha_1,\dots, \alpha_n)\in\R_{>0}^n$ and $\beta\in\R_{>0}$.
Consider a random $n\times n$ symmetric matrix $\bm{W} = (W_{i,j})_{1\leq i,j\leq n}$ with entries $(W_{i,j})_{i\leq j}$ independent and inverse gamma distributed as follows:
\[
W_{i,j} \sim
\begin{cases}
\invGamma(\alpha_i, \beta)
&\text{if } 1\leq i=j\leq n \, , \\
\invGamma(\alpha_i + \alpha_j,1)
&\text{if } 1\leq i<j\leq n \, .
\end{cases}
\]
Namely, the joint distribution of the upper triangular entries of $\bm{W}$ is
\begin{equation}
\label{eq:inverseGamma_symm}
\begin{split}
&\P(W_{i,j} \in \diff w_{i,j}\colon i\leq j) \\
= \, &\frac{1}{c_{\bm{\alpha},\beta}}
\left[ \prod_{i=1}^n w_{i,i}^{-\alpha_i} \prod_{i<j} w_{i,j}^{-\alpha_i - \alpha_j} \right]
\exp\left\{ - \sum_{i=1}^n \frac{\beta}{w_{i,i}} -\sum_{i<j} \frac{1}{w_{i,j}} \right\}
\prod_{i\leq j} \1_{\{w_{i,j}>0\}} \frac{\diff w_{i,j}}{w_{i,j}} \, ,
\end{split}
\end{equation}
with normalization constant
\begin{equation}
\label{eq:normalization}
c_{\bm{\alpha},\beta}
:= \beta^{-\sum_{i=1}^n \alpha_i}
\prod_{i=1}^n \Gamma(\alpha_i)
\prod_{i<j} \Gamma(\alpha_i + \alpha_j) \, .
\end{equation}

The main theorem of this section states that the diagonal entries of the image of $\bm{W}$ under the geometric Burge correspondence have a joint density proportional to a $\GL_n(\R)$-Whittaker function (with an exponential prefactor).

\begin{theorem}
\label{thm:WhittakerMeasure}
If $\bm{W}$ is an $n\times n$ symmetric matrix distributed according to~\eqref{eq:inverseGamma_symm} and $\bm{T} := \sfB(\bm{W})$, then
\begin{equation}
\label{eq:WhittakerMeasure}
\P(T_{i,i} \in \diff x_i \colon 1\leq i\leq n)
= \frac{1}{c_{\bm{\alpha},\beta}} 
\e^{-\beta/x_n}
\Psi^n_{-\bm{\alpha}}(x_1,\dots,x_n) \prod_{i=1}^n \1_{\{x_i>0\}} \frac{\diff x_i}{x_i} \, ,
\end{equation}
where the constant $c_{\bm{\alpha},\beta}$ is defined by~\eqref{eq:normalization}.
\end{theorem}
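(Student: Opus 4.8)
The plan is to obtain the joint law of the diagonal $(T_{i,i})$ by transporting the explicit density~\eqref{eq:inverseGamma_symm} through the geometric Burge map and then marginalizing. First I would invoke the volume-preserving property for symmetric arrays (Theorem~\ref{thm:volume_sym}): since the map $(\log w_{i,j})_{i\le j}\mapsto(\log t_{i,j})_{i\le j}$ has Jacobian $\pm 1$, the joint density of the upper-triangular entries $(T_{i,j})_{i\le j}$ with respect to the multiplicative Haar measure $\prod_{i\le j}\diff t_{i,j}/t_{i,j}$ is obtained from the density of $(W_{i,j})_{i\le j}$ simply by substituting $\bm w=\sfB^{-1}(\bm t)$, with no extra Jacobian factor. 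Thus the whole computation reduces to rewriting the power prefactor and the exponent of~\eqref{eq:inverseGamma_symm} as functions of $\bm t$, reparametrizing the upper-triangular part of $\bm T$ as a Gelfand-Tsetlin pattern $\bm z$ whose bottom row is the diagonal of $\bm T$, and integrating out the off-diagonal (non-bottom-row) entries.

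For the prefactor $\prod_i w_{i,i}^{-\alpha_i}\prod_{i<j}w_{i,j}^{-\alpha_i-\alpha_j}$, I would express the product of the $i$-th row of $\bm z$ as the product of an entire diagonal of $\bm t$, which by the diagonal-product identity~\eqref{eq:non-intersPaths_diagProd}, applied to the principal sub-arrays whose last diagonal box is a border box, equals the product of the top $i$ rows of $\bm W$. Consequently $\type(\bm z)_i=\prod_b w_{i,b}$, and pairing each off-diagonal factor $w_{i,j}^{-\alpha_i}$ with its mirror $w_{j,i}^{-\alpha_j}=w_{i,j}^{-\alpha_j}$ (using that $\bm W$ is symmetric) turns $\prod_i\type(\bm z)_i^{-\alpha_i}$ into exactly the prefactor of~\eqref{eq:inverseGamma_symm}; this is where the $-\bm\alpha$ in $\Psi^n_{-\bm\alpha}$ originates.

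For the exponent, whose positive part is $\beta\sum_i w_{i,i}^{-1}+\sum_{i<j}w_{i,j}^{-1}$, I would apply Proposition~\ref{prop:sumInvWeights}. The diagonal identity~\eqref{eq:sumInvWeights_diag}, $\sum_i w_{i,i}^{-1}=1/t_{1,1}$, contributes the boundary term $\beta/t_{1,1}$, i.e.\ the factor $e^{-\beta/x_n}$ attached to the diagonal corner $t_{1,1}$. For the remaining part, identity~\eqref{eq:sumInvWeights} gives $\sum_{(i,j)}w_{i,j}^{-1}=\sum_{(i,j)}(t_{i-1,j}+t_{i,j-1})/t_{i,j}$, and the symmetry of $\bm w$ and $\bm t$ rewrites its left side as $1/t_{1,1}+2\sum_{i<j}w_{i,j}^{-1}$; solving then identifies $\sum_{i<j}w_{i,j}^{-1}$ with the energy $\mathcal{E}(\bm z)$ of the pattern. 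Feeding these three identifications into the transported density factors it as $\prod_i\type(\bm z)_i^{-\alpha_i}\,e^{-\mathcal{E}(\bm z)}\,e^{-\beta/x_n}$ against $\prod_{i\le j}\diff t_{i,j}/t_{i,j}$, and integrating out all off-diagonal entries is, by the very definition~\eqref{eq:Whittaker}, precisely $\Psi^n_{-\bm\alpha}$ evaluated at the diagonal (read in the order matching~\eqref{eq:Whittaker}), with the constant $c_{\bm\alpha,\beta}$ surviving unchanged.

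I expect the main obstacle to be the energy identification, namely showing that the off-diagonal inverse-weight functional reassembles exactly into the pattern energy, $\sum_{i<j}w_{i,j}^{-1}=\mathcal{E}(\bm z)$ --- equivalently the purely algebraic identity $\mathcal{E}(\bm z)=\tfrac12\bigl(\sum_{(i,j)}(t_{i-1,j}+t_{i,j-1})/t_{i,j}-1/t_{1,1}\bigr)$ for symmetric $\bm t$. This forces one to match the nearest-neighbor ratios of the Whittaker integrand against the telescoping structure produced by Proposition~\ref{prop:sumInvWeights}, and it is also the step where the orientation of the diagonal (hence which corner carries the $\beta$) and the bookkeeping of the single boundary term must be pinned down precisely. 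By contrast, the type-to-prefactor step should be comparatively routine once the sub-array diagonal-product identities are in place.
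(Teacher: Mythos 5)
Your proposal is correct and follows essentially the same route as the paper's proof: volume preservation via Theorem~\ref{thm:volume_sym}, the prefactor via the diagonal-product identity~\eqref{eq:non-intersPaths_diagProd}, the exponent via Proposition~\ref{prop:sumInvWeights} (splitting off the $\beta/t_{1,1}$ boundary term exactly as you describe), and then reindexing the upper-triangular part as a Gelfand--Tsetlin pattern and integrating out the off-diagonal entries. The energy identification you flag as the main obstacle is indeed the step the paper handles by the reindexing $t_{i,j}=z_{n-j+i,n-j+1}$, under which the two sums of nearest-neighbour ratios match the two types of terms in $\mathcal{E}(\bm{z})$ directly.
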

\begin{proof}
Our strategy consists in computing the push-forward measure that the distribution of $\bm{W}$ induces on $\bm{T}$, using the properties of the geometric Burge correspondence obtained in Sections~\ref{sec:Burge_properties} and~\ref{sec:Burge_symmetric}.

Let $\bm{w}\in\R_{>0}^{n\times n}$ be a symmetric matrix; then, $\bm{t}:= \sfB(\bm{w})$ is also symmetric by Proposition~\ref{prop:Burge_symmetric}.
Moreover, by~\eqref{eq:non-intersPaths_diagProd}, we have that
\[
\prod_{j=1}^n w_{i,j}
= \frac{P_{n-i}(\bm{t})}{P_{n-i+1}(\bm{t})} \, ,
\]
where $P_k(\bm{t})$ is the product of the $k$-th diagonal of $\bm{t}$, as defined in~\eqref{eq:diagProd}.
It follows that
\[
\begin{split}
\prod_{i=1}^n w_{i,i}^{-\alpha_i} \prod_{i<j} w_{i,j}^{-\alpha_i - \alpha_j}
= \prod_{i,j=1}^n w_{i,j}^{-\alpha_i}
= \prod_{i=1}^n \left(\prod_{j=1}^n w_{i,j} \right)^{-\alpha_i}
= \prod_{i=1}^n \left( \frac{P_{n-i}(\bm{t})}{P_{n-i+1}(\bm{t})} \right)^{-\alpha_i}  \, .
\end{split}
\]

On the other hand, Proposition~\ref{prop:sumInvWeights} implies that
\begin{align*}
\sum_{i=1}^n \frac{\beta}{w_{i,i}}
&= \frac{\beta}{t_{1,1}} \, , \\
\sum_{i<j} \frac{1}{w_{i,j}}
&= \frac{1}{2} \sum_{i\neq j} \frac{1}{w_{i,j}}
= \frac{1}{2} \sum_{\substack{1\leq i,j\leq n \\ (i,j)\neq (1,1)}} \frac{t_{i-1,j} + t_{i,j-1}}{t_{i,j}}
= \sum_{1<i\leq j\leq n} \frac{t_{i-1,j}}{t_{i,j}} + \sum_{1\leq i\leq j<n} \frac{t_{i,j-1}}{t_{i,j}} \, .
\end{align*}
By Theorem~\ref{thm:volume_sym} the map $(\log w_{i,j})_{i\leq j} \mapsto (\log t_{i,j})_{i\leq j}$ has Jacobian $\pm 1$, hence the push-forward of~\eqref{eq:inverseGamma_symm} is
\[
\begin{split}
&\P(T_{i,j} \in \diff t_{i,j} \colon 1\leq i\leq j\leq n)
= \frac{1}{c_{\bm{\alpha},\beta}}
\prod_{i=1}^n \left( \frac{P_{n-i}(\bm{t})}{P_{n-i+1}(\bm{t})} \right)^{-\alpha_i} \\
&\qquad\qquad\times \exp\left\{ - \left[ \frac{\beta}{t_{1,1}} + \sum_{1<i\leq j\leq n} \frac{t_{i-1,j}}{t_{i,j}} + \sum_{1\leq i\leq j<n} \frac{t_{i,j-1}}{t_{i,j}} \right] \right\} 
\prod_{i\leq j} \1_{\{t_{i,j}>0\}} \frac{\diff t_{i,j}}{t_{i,j}} \, .
\end{split}
\]
To obtain the joint density of $(T_{1,1},\dots,T_{n,n})$, one has to integrate out all $t_{i,j}$'s with $i<j$ in the latter expression.
If we then reindex the variables by setting $t_{i,j} = z_{n-j+i,n-j+1}$ for all $1\leq i\leq j\leq n$, we obtain the right-hand side of~\eqref{eq:WhittakerMeasure}, where the Whittaker function $\Psi^n_{-\bm{\alpha}}$ is defined by~\eqref{eq:Whittaker}.
\end{proof}

Since the right-hand side of~\eqref{eq:WhittakerMeasure} is a probability density, we obtain as a consequence an explicit integral formula for Whittaker functions in terms of gamma functions.
\begin{corollary}
\label{eq:cauchy}
For $\bm{\alpha}\in\R_{>0}^n$ and $\beta\in\R_{>0}$, we have that
\[
\int_{\R_{>0}^n} \e^{-\beta/x_n} \Psi^n_{-\bm{\alpha}}(x_1,\dots,x_n) \prod_{i=1}^n \frac{\diff x_i}{x_i}
= \beta^{-\sum_{i=1}^n \alpha_i} \prod_{i=1}^n \Gamma(\alpha_i)
\prod_{1\leq i<j\leq n} \Gamma(\alpha_i+\alpha_j) \, .
\]
\end{corollary}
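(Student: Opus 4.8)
The plan is to read off the identity as the statement that the probability measure produced in Theorem~\ref{thm:WhittakerMeasure} has total mass one. Indeed, the right-hand side of~\eqref{eq:WhittakerMeasure} is, by that theorem, the joint density of the random vector $(T_{1,1},\dots,T_{n,n})$, where $\bm{T} := \sfB(\bm{W})$ and $\bm{W}$ is the symmetric inverse-gamma matrix distributed according to~\eqref{eq:inverseGamma_symm}. Since $\bm{W}$ is almost surely a positive symmetric matrix and the geometric Burge correspondence $\sfB$ is a bijection on $\R_{>0}^{n\times n}$, the diagonal entries $(T_{1,1},\dots,T_{n,n})$ form an almost surely well-defined $\R_{>0}^n$-valued random vector; hence its law is a bona fide probability measure.

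First I would integrate the density in~\eqref{eq:WhittakerMeasure} over all of $\R_{>0}^n$. Because a probability density integrates to one, this gives
\[
\frac{1}{c_{\bm{\alpha},\beta}} \int_{\R_{>0}^n} \e^{-\beta/x_n}\, \Psi^n_{-\bm{\alpha}}(x_1,\dots,x_n) \prod_{i=1}^n \frac{\diff x_i}{x_i} = 1 \, .
\]
Multiplying through by $c_{\bm{\alpha},\beta}$ and substituting the explicit value of the normalization constant from~\eqref{eq:normalization}, namely $c_{\bm{\alpha},\beta} = \beta^{-\sum_{i=1}^n \alpha_i} \prod_{i=1}^n \Gamma(\alpha_i) \prod_{i<j} \Gamma(\alpha_i + \alpha_j)$, yields precisely the claimed formula.

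There is essentially no obstacle here: all of the analytic content has already been absorbed into Theorem~\ref{thm:WhittakerMeasure}, whose proof establishes both that the push-forward of~\eqref{eq:inverseGamma_symm} under $\sfB$ has the stated density and that the normalization is $c_{\bm{\alpha},\beta}$. The only point deserving a word of comment is the legitimacy of integrating over $\R_{>0}^n$, but this is automatic: the integrand is nonnegative (being a product of $\e^{-\beta/x_n}>0$ and the manifestly positive Whittaker function $\Psi^n_{-\bm{\alpha}}$), so by Tonelli the multiple integral is unambiguously defined, and it equals one simply because it is the total mass of a probability law. In short, the corollary is the normalization identity dual to Theorem~\ref{thm:WhittakerMeasure}, obtained by integrating out the diagonal variables $x_1,\dots,x_n$.
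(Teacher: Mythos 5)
Your proposal is correct and coincides with the paper's own argument: the corollary is obtained exactly by observing that the right-hand side of~\eqref{eq:WhittakerMeasure} is a probability density, integrating it to one, and multiplying through by the normalization constant $c_{\bm{\alpha},\beta}$ from~\eqref{eq:normalization}. Your additional remarks on positivity and Tonelli are harmless elaborations of the same one-line deduction.
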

The latter can be seen as the analog of a Cauchy-Littlewood identity in our setting.
It is equivalent to an integral identity for the Mellin transform of a Whittaker function, conjectured by Bump and Friedberg~\cite{bumpFriedberg90} and proved by Stade~\cite{stade01} -- see~\cite[Section~7]{oConnellSeppalainenZygouras14}.

Let us now link Theorem~\ref{thm:WhittakerMeasure} to the polymer models introduced in Section~\ref{sec:intro}.
Consider again the symmetric log-gamma random environment $\bm{W}$ of~\eqref{eq:inverseGamma_symm}.
For $1\leq k\leq n$, define
\begin{equation}
\label{eq:polymerPartitionFn*}
Z^{*(k)}_{n,n} :=
\sum_{(\pi_1,\dots,\pi_k) \in \Pi^{*(k)}_{n,n}} \prod_{(i,j)\in \pi_1 \cup \cdots \cup \pi_k} W_{i,j} \, ,
\end{equation}
where $\Pi^{*(k)}_{n,n}$ is the set of $k$-tuples of non-intersecting directed lattice paths in $\N^2$ starting at $(n,1)$, $(n,2)$, \dots, $(n,k)$ and ending at $(1,n-k+1)$, $(1,n-k+2)$, \dots, $(1,n)$ respectively.
In particular, $Z^*_{n,n} = Z^{*(1)}_{n,n}$ is the \textbf{dual point-to-point polymer partition function} in a symmetric environment.
By~\eqref{eq:non-intersPaths}, each $Z^{*(k)}_{n,n}$ can be expressed as the product of some diagonal entries of the image $\bm{T}$ of $\bm{W}$ under the geometric Burge correspondence:
\[
Z^{*(k)}_{n,n} = T_{1,1} \cdots T_{k,k} \, , \qquad\qquad
1\leq k\leq n \, .
\]
Thus, the right-hand side of~\eqref{eq:WhittakerMeasure} is precisely the joint density of the random vector
\begin{equation}
\label{eq:dualPartition}
\left(T_{1,1},T_{2,2},\dots,T_{n,n}\right)
= \left(Z^{*(1)}_{n,n}, \frac{Z^{*(2)}_{n,n}}{Z^{*(1)}_{n,n}}, \dots, \frac{Z^{*(n)}_{n,n}}{Z^{*(n-1)}_{n,n}}\right) \, .
\end{equation}

Obviously, the dual point-to-point polymer partition function in the \emph{symmetric} environment $\bm{W}$ can be alternatively viewed as the point-to-point polymer partition function in the \emph{persymmetric} environment obtained by reversing the columns (or the rows) of $\bm{W}$.
Furthermore, the latter can also be reinterpreted as a \textbf{replica partition function} of two polymer paths constrained to coincide at the 
endpoint -- see explanations in Section~\ref{sec:intro}, around~\eqref{eq:overlappingPolymers}-\eqref{eq:replica}.
In particular, let $\Zrepl{n}$ be the replica partition function~\eqref{eq:replica} on the modified log-gamma environment $(W'_{i,j})_{i+j\leq n+1}$ given by
\begin{equation}
\label{eq:inverseGamma_replica}
\begin{split}
&\P(W_{i,j}' \in \diff w_{i,j}\colon i+ j\leq n+1)
= \frac{2^n}{c_{\bm{\alpha},\beta}}
\left[ \prod_{i=1}^n w_{i,n-i+1}^{-2 \alpha_{i}} \prod_{i+j\leq n} w_{i,j}^{-\alpha_i - \alpha_{n-j+1}} \right] \\
&\qquad\qquad\qquad \times \exp\left\{ - \sum_{i=1}^n \frac{\beta}{w^2_{i,n-i+1}} -\sum_{i+j\leq n} \frac{1}{w_{i,j}} \right\}
\prod_{i+j\leq n+1} \1_{\{w_{i,j}>0\}} \frac{\diff w_{i,j}}{w_{i,j}} \, ,
\end{split}
\end{equation}
with normalization constant $c_{\bm{\alpha},\beta}$  as in~\eqref{eq:normalization}.
It then follows that
\[
\Zrepl{n} \text{ is equal in distribution to } Z^*_{n,n} \, .
\]

Thanks to Theorem~\ref{thm:WhittakerMeasure} and~\eqref{eq:dualPartition}, we
thus arrive at the following theorem:
\begin{theorem}
The Laplace transform of the replica partition function $\Zrepl{n}$ defined in~\eqref{eq:replica} on the log-gamma environment~\eqref{eq:inverseGamma_replica} is given by the integral formula
\begin{equation}
\label{eq:polymerLaplace}
\E\left[ \e^{-r \Zrepl{n}} \right]
= \frac{1}{c_{\bm{\alpha},\beta}} \int_{\R_{>0}^n} \e^{-rx_1 - \beta/x_n} \, \Psi^n_{-\bm{\alpha}}(x_1,\dots,x_n) \prod_{i=1}^n \frac{\diff x_i}{x_i} \, .
\end{equation}
\end{theorem}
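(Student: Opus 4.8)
The plan is to assemble three facts already at our disposal, so that the stated integral formula follows with essentially no new computation. First I would invoke the distributional identity $\Zrepl{n} \stackrel{d}{=} Z^*_{n,n}$, established immediately before the statement (via the reinterpretation of the point-to-point persymmetric partition function as a replica, together with the column reversal relating the persymmetric and symmetric environments). This reduces the Laplace transform of the replica partition function to that of the dual point-to-point partition function, that is, $\E[\e^{-r\Zrepl{n}}] = \E[\e^{-rZ^*_{n,n}}]$, where $Z^*_{n,n}$ is computed in the symmetric log-gamma environment $\bm{W}$ of~\eqref{eq:inverseGamma_symm}.

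Next I would apply the $k=1$ case of Proposition~\ref{prop:non-intersPaths}, namely the single-path identity~\eqref{eq:non-intersPaths_partitionFn}, to $\bm{W}$ and its geometric Burge image $\bm{T}=\sfB(\bm{W})$. This gives $Z^*_{n,n}=Z^{*(1)}_{n,n}=T_{1,1}$, so that $\E[\e^{-rZ^*_{n,n}}] = \E[\e^{-rT_{1,1}}]$. In other words, the observable of interest is exactly the top-left diagonal entry of the Burge output, which is the first coordinate of the random vector in~\eqref{eq:dualPartition}.

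Finally, Theorem~\ref{thm:WhittakerMeasure} provides the joint law of the diagonal entries $(T_{1,1},\dots,T_{n,n})$ as the Whittaker measure appearing on the right-hand side of~\eqref{eq:WhittakerMeasure}. Since the weight $\e^{-rT_{1,1}}$ depends only on the first diagonal entry, I would simply weight the integrand of~\eqref{eq:WhittakerMeasure} by $\e^{-rx_1}$ and integrate, obtaining
\[
\E[\e^{-rT_{1,1}}] = \frac{1}{c_{\bm{\alpha},\beta}} \int_{\R_{>0}^n} \e^{-rx_1 - \beta/x_n}\, \Psi^n_{-\bm{\alpha}}(x_1,\dots,x_n) \prod_{i=1}^n \frac{\diff x_i}{x_i} \, ,
\]
which is precisely the claimed identity~\eqref{eq:polymerLaplace}.

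I do not expect any genuine obstacle at this stage: the statement is effectively a corollary of Theorem~\ref{thm:WhittakerMeasure} combined with the two identities above, and all the substantive difficulty has already been absorbed into the earlier results, chiefly the volume-preserving property of Theorem~\ref{thm:volume_sym} and the path and diagonal identities of Propositions~\ref{prop:non-intersPaths} and~\ref{prop:sumInvWeights} that underpin the Whittaker measure. The only point requiring a word of care is that weighting the joint density by $\e^{-rx_1}$ and integrating yields the Laplace transform of the $T_{1,1}$-marginal; this is immediate by nonnegativity and Fubini, since $\e^{-rx_1}$ is bounded for $r\geq 0$ and the joint density integrates to $1$.
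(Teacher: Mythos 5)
Your proposal is correct and follows exactly the route the paper takes (implicitly, since the paper states this theorem as a direct consequence of the preceding discussion): the distributional identity $\Zrepl{n} \stackrel{d}{=} Z^*_{n,n}$, the identification $Z^*_{n,n}=T_{1,1}$ via the $k=1$ case of Proposition~\ref{prop:non-intersPaths}, and integration of $\e^{-rx_1}$ against the Whittaker density of Theorem~\ref{thm:WhittakerMeasure}. No gaps.
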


We close by discussing a non-trivial identity in distribution, in the context of symmetryzed polymers, between the dual and the usual partition functions, which have been analyzed in the present work and in~\cite{oConnellSeppalainenZygouras14} respectively.
Let us consider a symmetric matrix $\bm{W}$ distributed as in~\eqref{eq:inverseGamma_symm} with $\beta=1/2$.
Define, for $1\leq k\leq n$,
\begin{equation}
\label{eq:polymerPartitionFn}
Z^{(k)}_{n,n} :=
\sum_{(\pi_1,\dots,\pi_k) \in \Pi^{(k)}_{n,n}} \prod_{(i,j)\in \pi_1 \cup \cdots \cup \pi_k} W_{i,j} \, ,
\end{equation}
where $\Pi^{(k)}_{n,n}$ is the set of $k$-tuples of non-intersecting directed lattice paths in $\N^2$ starting at $(1,1)$, $(1,2)$, \dots, $(1,k)$ and ending at $(n,n-k+1)$, $(n,n-k+2)$, \dots, $(n,n)$ respectively.
In particular, $Z_{n,n} = Z^{(1)}_{n,n}$ is the usual partition function from $(1,1)$ to $(n,n)$ on $\bm{W}$.
It was shown in~\cite[Section~5]{oConnellSeppalainenZygouras14} that the vector
\[
\left(Z^{(1)}_{n,n}, \frac{Z^{(2)}_{n,n}}{Z^{(1)}_{n,n}}, \dots, \frac{Z^{(n)}_{n,n}}{Z^{(n-1)}_{n,n}}\right)
\]
has exactly the density given by the right-hand side of~\eqref{eq:WhittakerMeasure} for $\beta=1/2$.
In particular, for this specific symmetric environment, $Z_{n,n}$ and $Z^*_{n,n}$ turn out to be identically distributed.

When $n=2$, this reduces to the following identity in law: if $X,Y$ and $Z$ are independent inverse gamma random variables with respective parameters $a,b$ and $a+b$, then the random variables $(X+Y)Z^2$ and $XYZ$ have the same law.
This can be seen as a consequence of Lukacs' theorem~\cite[\S~1]{lukacs55}, as follows.
Let us write $U:=X^{-1}+Y^{-1}$ and $V:=X^{-1} Y$.
Since $U^{-1}$ and $Z$ are independent and both inverse gamma distributed with parameter $a+b$, we have that $U^{-2} Z$ and $U^{-1} Z^2$ are equally distributed.
Moreover, by Lukacs' theorem, $U$ and $V$ are independent.
It follows that $XYZ = U^{-2} Z (1+V)^2 V^{-1}$ has the same law as $(X+Y)Z^2 = U^{-1} Z^2 (1+V)^2 V^{-1}$, as required.

\appendix

\section{Proof of Proposition~\ref{prop:compDiagMaps}}
\label{app:proofProp}

We will prove
\begin{equation}
\label{eq:compDiagMaps2}
\sigma_{p,q} \rho_{p,q+1} \tau_{p,q}
= \tau_{p,q+1} \rho_{p,q} \sigma_{p-1,q} \sfe_{p,q}^{p,q+1} \, ,
\end{equation}
as an identity of maps acting on an array $\bm{w} = (w_{i,j})_{(i,j)\in\ \lambda} \in \R_{>0}^{\lambda}$, where $\lambda$ is a partition such that $(p,q),(p,q+1),(p-1,q)\in \lambda$.
We will proceed by induction on the number of entries of $\bm{w}$ that the maps appearing in~\eqref{eq:compDiagMaps2} modify, i.e.\
\begin{equation}
\label{eq:nEntries2Diag}
n_{p,q}
:= p\wedge q + p\wedge (q+1)
= \begin{cases}
2p &\text{if } p\leq q \, , \\
2q+1 &\text{if } p\geq q+1 \, .
\end{cases}
\end{equation}
Note that $n_{p,q}\geq 3$, as $(p,q),(p,q+1),(p-1,q)\in \lambda$.

When $n_{p,q} =3$, i.e.\ $p=2,q=1$, the only entries that the maps in~\eqref{eq:compDiagMaps2} use and/or modify are $w_{i,j}$ with $1\leq i,j\leq 2$.
We may then assume that $\lambda=(2,2)$ and compute:
\[
\begin{split}
\begin{matrix}
w_{1,1} &w_{1,2} \\
w_{2,1} &w_{2,2}
\end{matrix}
\quad&\xmapsto{\tau_{2,1}}\quad
\begin{matrix}
w_{1,1} &w_{1,2} \\
w_{1,1} w_{2,1} &w_{2,2}
\end{matrix} \\
&\xmapsto{\rho_{2,2}}\quad
\begin{matrix}
w_{1,2}w_{2,1}(w_{1,2}+w_{1,1}w_{2,1})^{-1} &w_{1,2} \\
w_{1,1} w_{2,1} &w_{2,2}(w_{1,2}+w_{1,1}w_{2,1})
\end{matrix} \\
&\xmapsto{\sigma_{2,1}}\quad
\begin{matrix}
w_{1,2}w_{2,1}(w_{1,2}+w_{1,1}w_{2,1})^{-1} &w_{1,2} \\
w_{1,2} w_{2,2} w_{1,1}^{-1} &w_{2,2}(w_{1,2}+w_{1,1}w_{2,1})
\end{matrix}
\end{split}
\]
and, on the other hand,
\[
\begin{split}
\begin{matrix}
w_{1,1} &w_{1,2} \\
w_{2,1} &w_{2,2}
\end{matrix}
\quad&\xmapsto{\sfe_{2,1}^{2,2}}\quad
\begin{matrix}
w_{1,1} &w_{1,2} \\
w_{2,2} &w_{2,1}
\end{matrix} \\
&\xmapsto{\sigma_{1,1}}\quad
\begin{matrix}
w_{1,2}w_{1,1}^{-1} &w_{1,2} \\
w_{2,2} &w_{2,1}
\end{matrix} \\
&\xmapsto{\rho_{2,1}}\quad
\begin{matrix}
w_{1,2}w_{1,1}^{-1} &w_{1,2} \\
w_{1,2} w_{2,2} w_{1,1}^{-1} &w_{2,1}
\end{matrix} \\
&\xmapsto{\tau_{2,2}}\quad
\begin{matrix}
w_{1,2}w_{2,1}(w_{1,2}+w_{1,1}w_{2,1})^{-1} &w_{1,2} \\
w_{1,2} w_{2,2} w_{1,1}^{-1} &w_{2,2}(w_{1,2}+w_{1,1}w_{2,1})
\end{matrix} \, .
\end{split}
\]
The output arrays coincide, so~\eqref{eq:compDiagMaps2} holds for $n_{p,q}=3$.

The other ``simple'' cases $n_{p,q}=4,5,6$ are also dealt by hand (we omit the straightforward but tedious checks), as the inductive procedure that we use in the following works when $n_{p,q} \geq 7$.
Given $n\geq 6$, we then assume by induction that~\eqref{eq:compDiagMaps2} holds when $n_{p,q} = n$ and aim to deduce that it also holds when $n_{p,q}=n+1$.
To fix the ideas, we will prove this when $n$ is even -- the proof when $n$ is odd is analogous.
If $n$ is even then $n_{p,q}=n+1$ is odd and, by~\eqref{eq:nEntries2Diag}, we have that $n_{p,q}=2q+1$ and $p\geq q+1$.
Set $m:= p-q \geq 1$.
Notice that $n_{q,q}=2q=n$, so by the induction hypothesis we have that
\begin{equation}
\label{eq:compDiagMaps3}
\sigma_{q,q} \rho_{q,q+1} \tau_{q,q}
= \tau_{q,q+1} \rho_{q,q} \sigma_{q-1,q} \sfe_{q,q}^{q,q+1} \, .
\end{equation}
Keeping in mind that we eventually wish to deduce~\eqref{eq:compDiagMaps2}, we now apply~\eqref{eq:compDiagMaps3} to the array obtained by just ignoring the first $m$ rows of $\bm{w}$.
This operation, viewed on the whole array $\bm{w}$, corresponds to ``shifting downwards'' the diagonal maps of~\eqref{eq:compDiagMaps3} by $m$ rows.
The local maps involved in the diagonal maps of~\eqref{eq:compDiagMaps3} with subscript $(i,j)$, $i\geq 2$, then simply become the corresponding maps shifted downwards by $m$ rows: e.g., $\sfd_{i,j}^{k,l}$ just becomes $\sfd_{m+i,j}^{m+k,l}$ for all $i\geq 2$.
However, when shifted downwards by $m$ rows, $\sfa_{1,2}$ does \emph{not} become $\sfa_{m+1,2}$.
The reason is that the two maps act differently with respect to the previous row: by definition, $\sfa_{1,2}$ does not use any entries in row $0$ (which does not even exist), whereas $\sfa_{m+1,2}$ does use entries in row $m$ (in particular, it uses $w_{m,2}$).
The same holds for all the other local maps with subscript on the first row: $\sfa_{1,j}$ and $\sfd^{k,l}_{1,j}$, after the downwards shift, do \emph{not} become $\sfa_{m+1,j}$ and $\sfd^{m+k,l}_{m+1,j}$ respectively.
Instead of the latter, we will need to use:
\begin{align*}
&\tilde{\sfa}_{m+1,j}\colon w_{m+1,j} \longmapsto \frac{w_{m+1,j-1}}{w_{m+1,j}} \left(\frac{1}{w_{m+2,j}} + \frac{1}{w_{m+1,j+1}} \right)^{-1} \, , \\
&\tilde{\sfd}^{m+k,l}_{m+1,j}\colon
\begin{cases}
w_{m+1,j} \longmapsto \left( \dfrac{1}{w_{m+1,j}} + \dfrac{1}{w_{m+k,l} w_{m+1,j-1}} \right)^{-1} \, , \\
w_{m+k,l} \longmapsto \left( \dfrac{w_{m+k,l} w_{m+1,j-1}}{w_{m+1,j}^2} + \dfrac{1}{w_{m+1,j}} \right) \left(\dfrac{1}{w_{m+2,j}} + \dfrac{1}{w_{m+1,j+1}} \right)^{-1} \, ,
\end{cases}
\end{align*}
with the convention (different from the usual one!) that $w_{m+1,0}=1$.
We thus conclude that~\eqref{eq:compDiagMaps3}, decomposed into local maps and downwards shifted (so that it really acts only on rows $i\geq m+1$ of $\bm{w}$), reads as
\begin{equation}
\label{eq:compDiagMapsShift}
\begin{split}
&[\tilde{\sfa}_{m+1,1} \sfa_{m+2,2} \cdots \sfa_{m+q-1,q-1} \sfb_{m+q,q}] \circ
[\tilde{\sfa}_{m+1,2} \sfa_{m+2,3} \cdots \sfa_{m+q-1,q} \sfc_{m+q,q+1}] \\
&\qquad\qquad\qquad\qquad\qquad\qquad
\circ [\sfc_{m+q,q} \sfd^{m+q,q}_{m+q-1,q-1} \cdots \sfd^{m+q,q}_{m+2,2} \tilde{\sfd}^{m+q,q}_{m+1,1}] \\
= \, &[\sfc_{m+q,q+1} \sfd^{m+q,q+1}_{m+q-1,q} \cdots \sfd^{m+q,q+1}_{m+2,3} \tilde{\sfd}^{m+q,q+1}_{m+1,2}] \circ
[\tilde{\sfa}_{m+1,1} \sfa_{m+2,2} \cdots \sfa_{m+q-1,q-1} \sfc_{m+q,q}] \\
&\qquad\qquad\qquad\qquad\qquad\qquad
\circ [\tilde{\sfa}_{m+1,2} \sfa_{m+2,3} \cdots \sfa_{m+q-2,q-1} \sfb_{m+q-1,q}]
\circ \sfe_{m+q,q}^{m+q,q+1} \, .
\end{split}
\end{equation}

We now wish to deduce~\eqref{eq:compDiagMaps2} from the latter.
Let us first recap a few properties of the local maps that we will use in the following:
\begin{itemize}
\item $\sfa_{i,j}$ modifies $w_{i,j}$ only, using all its nearest neighbors $w_{i-1,j}$, $w_{i,j-1}$, $w_{i+1,j}$, and $w_{i,j+1}$;
\item $\sfb_{i,j}$ modifies $w_{i,j}$ only, using $w_{i-1,j}$, $w_{i,j-1}$, and $w_{i,j+1}$;
\item $\sfc_{i,j}$ modifies $w_{i,j}$ only, using $w_{i-1,j}$ and $w_{i,j-1}$;
\item $\sfd_{i,j}^{k,l}$ modifies $w_{i,j}$ and $w_{k,l}$, using all the nearest neighbors of $w_{i,j}$ only;
\item the local maps with tilde and subscript $(i,j)$ act in the same way, except that they do \emph{not} use any entry in the $(i-1)$-th row;
\item $\sfe_{i,j}^{k,l}$ just exchanges entries $w_{i,j}$ and $w_{k,l}$.
\end{itemize}
From the above properties, one can immediately deduce several commutation relations of the local maps, which we will apply repeatedly.
Using the definitions~\eqref{eq:rho}-\eqref{eq:sigma}-\eqref{eq:tau} of the diagonal maps in terms of local maps and applying the aforementioned commutative properties, we can write the left-hand side of~\eqref{eq:compDiagMaps2} as
\[
\begin{split}
&\sigma_{m+q,q} \rho_{m+q,q+1} \tau_{m+q,q} \\
= \, & [\sfa_{m+1,1} \sfa_{m+2,2} \sfa_{m+3,3} \cdots \sfa_{m+q-1,q-1} \sfb_{m+q,q}] \circ
[\sfa_{m,1} \sfa_{m+1,2} \sfa_{m+2,3} \cdots \sfa_{m+q-1,q} \sfc_{m+q,q+1}] \\
&\circ [\sfc_{m+q,q} \sfd^{m+q,q}_{m+q-1,q-1} \cdots \sfd^{m+q,q}_{m+2,2} \sfd^{m+q,q}_{m+1,1}] \\
= \, & (\sfa_{m+1,1} \sfa_{m+2,2} \sfa_{m,1} \sfa_{m+1,2} \tilde{\sfa}_{m+1,2} \sfa_{m+2,2} \tilde{\sfa}_{m+1,1}) \\
&\circ [\tilde{\sfa}_{m+1,1} \sfa_{m+2,2} \sfa_{m+3,3} \cdots \sfa_{m+q-1,q-1} \sfb_{m+q,q}] \circ
[\tilde{\sfa}_{m+1,2} \sfa_{m+2,3} \cdots \sfa_{m+q-1,q} \sfc_{m+q,q+1}] \\
&\circ [\sfc_{m+q,q} \sfd^{m+q,q}_{m+q-1,q-1} \cdots \sfd^{m+q,q}_{m+2,2} \tilde{\sfd}^{m+q,q}_{m+1,1}] (\tilde{\sfd}^{m+q,q}_{m+1,1})^{-1} \sfd^{m+q,q}_{m+1,1} \, .
\end{split}
\]
In particular, the latter equality has been obtained by: artificially introducing $\tilde{\sfa}_{m+1,2} \tilde{\sfa}_{m+1,2}$ in between $\sfa_{m+1,2}$ and $\sfa_{m+2,3}$; moving $\sfa_{m,1} \sfa_{m+1,2} \tilde{\sfa}_{m+1,2}$ to the left; artificially introducing $\sfa_{m+2,2} \tilde{\sfa}_{m+1,1}$ and $\tilde{\sfd}^{m+q,q}_{m+1,1}$, with their respective inverses (for all these operations, recall that all $\sfa_{i,j}$'s and $\tilde{\sfa}_{i,j}$'s are involutions!).
We can now use~\eqref{eq:compDiagMapsShift}, whose left-hand side appears in the latter expression.
Setting
\[
\mathcal{A} := \sfa_{m+1,1} \sfa_{m+2,2} \sfa_{m,1} \sfa_{m+1,2} \tilde{\sfa}_{m+1,2} \sfa_{m+2,2} \tilde{\sfa}_{m+1,1}
\]
for conciseness, we thus obtain:
\[
\begin{split}
&\sigma_{m+q,q} \rho_{m+q,q+1} \tau_{m+q,q} \\
= \, &\mathcal{A} \circ
[\sfc_{m+q,q+1} \sfd^{m+q,q+1}_{m+q-1,q} \cdots \sfd^{m+q,q+1}_{m+2,3} \tilde{\sfd}^{m+q,q+1}_{m+1,2}] \circ [\tilde{\sfa}_{m+1,1} \sfa_{m+2,2} \cdots \sfa_{m+q-1,q-1} \sfc_{m+q,q}] \\
&\circ [\tilde{\sfa}_{m+1,2} \sfa_{m+2,3} \cdots \sfa_{m+q-2,q-1} \sfb_{m+q-1,q}]
\sfe_{m+q,q}^{m+q,q+1}
(\tilde{\sfd}^{m+q,q}_{m+1,1})^{-1} \sfd^{m+q,q}_{m+1,1} \, .
\end{split}
\]
Let us define $\mathcal{B}$ via
\[
\mathcal{B} \, \sfe_{m+q,q}^{m+q,q+1}
:= \sfe_{m+q,q}^{m+q,q+1}
(\tilde{\sfd}^{m+q,q}_{m+1,1})^{-1} \sfd^{m+q,q}_{m+1,1} \, .
\]
Notice that $\sfe_{m+q,q}^{m+q,q+1} \sfd^{m+q,q}_{m+1,1} \sfe_{m+q,q}^{m+q,q+1} =  \sfd^{m+q,q+1}_{m+1,1}$ by definition, and an analogous property holds when replacing the $\sfd$-maps with the corresponding $\tilde{\sfd}$.
Recalling that the exchange maps are involutions, we then have that
\[
\begin{split}
\mathcal{B}
= (\sfe_{m+q,q}^{m+q,q+1}
\tilde{\sfd}^{m+q,q}_{m+1,1}
\sfe_{m+q,q}^{m+q,q+1})^{-1}
(\sfe_{m+q,q}^{m+q,q+1}
\sfd^{m+q,q}_{m+1,1}
\sfe_{m+q,q}^{m+q,q+1})
= (\tilde{\sfd}^{m+q,q+1}_{m+1,1})^{-1} \sfd^{m+q,q+1}_{m+1,1} \, .
\end{split}
\]
Continuing to apply the commutative properties of the local maps, we obtain:
\[
\begin{split}
&\sigma_{m+q,q} \rho_{m+q,q+1} \tau_{m+q,q} \\
= \, &[\sfc_{m+q,q+1} \sfd^{m+q,q+1}_{m+q-1,q} \cdots \sfd^{m+q,q+1}_{m+3,4}] \\
&\circ \mathcal{A} \, \sfd^{m+q,q+1}_{m+2,3} \tilde{\sfd}^{m+q,q+1}_{m+1,2}
[\tilde{\sfa}_{m+1,1} \sfa_{m+2,2} \cdots \sfa_{m+q-1,q-1} \sfc_{m+q,q}]
(\tilde{\sfa}_{m+1,2} \,\mathcal{B}\, \sfa_{m+1,2} \sfa_{m,1}) \\
&\circ [\sfa_{m,1} \sfa_{m+1,2} \sfa_{m+2,3} \cdots \sfa_{m+q-2,q-1} \sfb_{m+q-1,q}]
\sfe_{m+q,q}^{m+q,q+1} \\
= \, &[\sfc_{m+q,q+1} \sfd^{m+q,q+1}_{m+q-1,q} \cdots \sfd^{m+q,q+1}_{m+2,3} \sfd^{m+q,q+1}_{m+1,2} \sfd^{m+q,q+1}_{m,1} ]
(\sfd^{m+q,q+1}_{m,1})^{-1}
(\sfd^{m+q,q+1}_{m+1,2})^{-1} \\
&\circ (\sfd^{m+q,q+1}_{m+2,3})^{-1}
\,\mathcal{A}\, \sfd^{m+q,q+1}_{m+2,3} \tilde{\sfd}^{m+q,q+1}_{m+1,2}
\tilde{\sfa}_{m+1,1} \sfa_{m+2,2} \tilde{\sfa}_{m+1,2} \,\mathcal{B}\, \sfa_{m+1,2} \sfa_{m,1} \sfa_{m+2,2} \sfa_{m+1,1} \\
&\circ [\sfa_{m+1,1} \sfa_{m+2,2} \sfa_{m+3,3} \cdots \sfa_{m+q-1,q-1} \sfc_{m+q,q}] \circ
[\sfa_{m,1} \cdots \sfa_{m+q-2,q-1} \sfb_{m+q-1,q}]
\sfe_{m+q,q}^{m+q,q+1} \, .
\end{split}
\]
For the first equality above, we have moved $\mathcal{A}$ to the right and $\mathcal{B}$ to the left; next, we have also artificially introduced the maps $\sfa_{m+1,2}$ and $\sfa_{m,1}$ and their inverses, using the fact that they are involutions.
For the second equality, we have first moved $\tilde{\sfa}_{m+1,2} \,\mathcal{B}\, \sfa_{m+1,2} \sfa_{m,1}$ to the left, and then artificially introduced $\sfd^{m+q,q+1}_{m+2,3}$, $\sfd^{m+q,q+1}_{m+1,2}$, $\sfd^{m+q,q+1}_{m,1}$, $\sfa_{m+2,2}$, and $\sfa_{m+1,1}$, with their respective inverses.
Recognizing three diagonal maps in the display above, we then deduce that
\[
\sigma_{m+q,q} \rho_{m+q,q+1} \tau_{m+q,q}
= \tau_{m+q,q+1} \,\mathcal{C}\, \rho_{m+q,q} \sigma_{m+q-1,q} \sfe_{m+q,q}^{m+q,q+1} \, ,
\]
where
\[
\begin{split}
\mathcal{C}
:= \, &(\sfd^{m+q,q+1}_{m,1})^{-1}
(\sfd^{m+q,q+1}_{m+1,2})^{-1}
(\sfd^{m+q,q+1}_{m+2,3})^{-1} \,\mathcal{A}\, \sfd^{m+q,q+1}_{m+2,3} \tilde{\sfd}^{m+q,q+1}_{m+1,2} \\
&\circ \tilde{\sfa}_{m+1,1} \sfa_{m+2,2} \tilde{\sfa}_{m+1,2} \,\mathcal{B}\, \sfa_{m+1,2} \sfa_{m,1} \sfa_{m+2,2} \sfa_{m+1,1} \, .
\end{split}
\]

Notice that the computations above hold when $q\geq 3$, i.e.\ $n\geq 6$: recall that such an assumption was made at the beginning of the inductive argument.
In fact, the local maps involved in the definition of $\mathcal{C}$ are well defined only when $q \geq 3$.
To conclude~\eqref{eq:compDiagMaps2}, it thus remains to show that the above-defined $\mathcal{C}$, which is the composition of twenty-one local maps (taking into account the definitions of $\mathcal{A}$ and $\mathcal{B}$), equals the identity.
This a straightforward, long and tedious ``by hand'' check, which can be easily carried out using a software: we have done this via an open access Mathematica package available at~\url{https://github.com/EliaBisi/LocalMaps}.

\printbibliography

\end{document}